\documentclass[a4paper]{article}

\usepackage[all]{xy}
\usepackage[leqno]{amsmath}
\usepackage{amssymb}
\usepackage[only,mapsfrom,heavycircles,lightning]{stmaryrd}
\usepackage{latexsym}
\usepackage{graphicx}
\usepackage{graphbox}
\usepackage{mathrsfs}   
\usepackage{titlesec} 
\usepackage{paralist}
\usepackage{tocloft}
\usepackage{xcolor}
\usepackage{upgreek}
\usepackage{lmodern}
\usepackage[T1]{fontenc}
\usepackage[utf8]{inputenc}
\usepackage{calc}
\usepackage[normalem]{ulem}

\usepackage[hidelinks=true,hypertexnames=false,breaklinks=true]{hyperref} 
\usepackage{comment}

\addtolength{\textwidth}{70pt}
\addtolength{\evensidemargin}{-35pt}
\addtolength{\oddsidemargin}{-35pt}




\newlength{\theorempostskipamount}
\setlength{\theorempostskipamount}{0pt}

\makeatletter

\newenvironment{theorem}[1][]
{\paragraph{Theorem} {\normalfont #1} \it}
{\vspace{\the\theorempostskipamount}}
\def\theorem{\@ifnextchar[{\@theoremopt}{\@theoremplain}}
\def\@theoremplain{\paragraph{Theorem} \it}
\def\@theoremopt[#1]{\paragraph{Theorem \normalfont #1}  \it}

\newenvironment{lemma}[1][]
{\paragraph{Lemma} {\normalfont #1} \it}
{\vspace{\the\theorempostskipamount}}
\def\lemma{\@ifnextchar[{\@lemmaopt}{\@lemmaplain}}
\def\@lemmaplain{\paragraph{Lemma} \it}
\def\@lemmaopt[#1]{\paragraph{Lemma \normalfont #1}  \it}

\newenvironment{proposition}[1][]
{\paragraph{Proposition} {\normalfont #1} \it}
{\vspace{\the\theorempostskipamount}}
\def\proposition{\@ifnextchar[{\@propositionopt}{\@propositionplain}}
\def\@propositionplain{\paragraph{Proposition} \it}
\def\@propositionopt[#1]{\paragraph{Proposition \normalfont #1}  \it}

\def\definition{\@ifnextchar[{\@definitionopt}{\@definitionplain}}
\def\@definitionplain{\paragraph{Definition} \it}
\def\@definitionopt[#1]{\paragraph{Definition \normalfont #1}  \it}

\newenvironment{corollary}[1][]
{\paragraph{Corollary} {\normalfont #1} \it}
{\vspace{\the\theorempostskipamount}}
\def\corollary{\@ifnextchar[{\@corollaryopt}{\@corollaryplain}}
\def\@corollaryplain{\paragraph{Corollary} \it}
\def\@corollaryopt[#1]{\paragraph{Corollary \normalfont #1}  \it}

\def\question{\@ifnextchar[{\@questionopt}{\@questionplain}}
\def\@questionplain{\paragraph{Question} \it}
\def\@questionopt[#1]{\paragraph{Question \normalfont #1}  \it}

\def\problem{\@ifnextchar[{\@problemopt}{\@problemplain}}
\def\@problemplain{\paragraph{Problem} \it}
\def\@problemopt[#1]{\paragraph{Problem \normalfont #1}  \it}

\newenvironment{conjecture}[1][]
{\paragraph{Conjecture} {\normalfont #1} \it}
{\vspace{\the\theorempostskipamount}}
\def\conjecture{\@ifnextchar[{\@conjectureopt}{\@conjectureplain}}
\def\@conjectureplain{\paragraph{Conjecture} \it}
\def\@conjectureopt[#1]{\paragraph{Conjecture \normalfont #1}  \it}

\newenvironment{remark}[1][]
{\paragraph{Remark} {\normalfont #1}}
{\vspace{\the\theorempostskipamount}}
\def\remark{\@ifnextchar[{\@remarkopt}{\@remarkplain}}
\def\@remarkplain{\paragraph{Remark}}
\def\@remarkopt[#1]{\paragraph{Remark \normalfont #1}}

\def\remarks{\@ifnextchar[{\@remarksopt}{\@remarksplain}}
\def\@remarksplain{\paragraph{Remarks} \it}
\def\@remarksopt[#1]{\paragraph{Remarks \normalfont #1}  \it}

\def\example{\@ifnextchar[{\@exampleopt}{\@exampleplain}}
\def\@exampleplain{\paragraph{Example}}
\def\@exampleopt[#1]{\paragraph{Example \normalfont #1}}

\def\examples{\@ifnextchar[{\@examplesopt}{\@examplesplain}}
\def\@examplesplain{\paragraph{Examples} \it}
\def\@examplesopt[#1]{\paragraph{Examples \normalfont #1}  \it}

\def\exercise{\@ifnextchar[{\@exerciseopt}{\@exerciseplain}}
\def\@exerciseplain{\paragraph{Exercise} \it}
\def\@exerciseopt[#1]{\paragraph{Exercise \normalfont #1}  \it}

\def\notation{\@ifnextchar[{\@notationopt}{\@notationplain}}
\def\@notationplain{\paragraph{Notation} \it}
\def\@notationopt[#1]{\paragraph{Notation \normalfont #1}  \it}

\def\convention{\@ifnextchar[{\@conventionopt}{\@conventionplain}}
\def\@conventionplain{\paragraph{Convention} \it}
\def\@conventionopt[#1]{\paragraph{Convention \normalfont #1}  \it}

\def\warning{\@ifnextchar[{\@warningopt}{\@warningplain}}
\def\@warningplain{\paragraph{Warning} \it}
\def\@warningopt[#1]{\paragraph{Warning \normalfont #1}  \it}


\def\theoreme{\@ifnextchar[{\@theoremeopt}{\@theoremeplain}}
\def\@theoremeplain{\paragraph{Théorème} \it}
\def\@theoremeopt[#1]{\paragraph{Théorème {\normalfont #1}}  \it}

\def\lemme{\@ifnextchar[{\@lemmeopt}{\@lemmeplain}}
\def\@lemmeplain{\paragraph{Lemme} \it}
\def\@lemmeopt[#1]{\paragraph{Lemme {\normalfont #1}}  \it}

\newenvironment{de'finition}[1][]
{\paragraph{Définition} {\normalfont #1} \it}
{\vspace{\the\theorempostskipamount}}

\def\exemple{\@ifnextchar[{\@exempleopt}{\@exempleplain}}
\def\@exempleplain{\paragraph{Exemple}}
\def\@exempleopt[#1]{\paragraph{Exemple {\normalfont #1}}}

\newcommand{\thmendspace}{\vspace{\the\theorempostskipamount}}
\newlength{\negvcorr}
\setlength{\negvcorr}{-4mm}
\newlength{\mnegvcorr}
\setlength{\mnegvcorr}{\negvcorr/2}

\makeatother


\newenvironment{proof}[1][Proof]{\bigskip\noindent \textit{#1.~}}
{\hfill $\Box$}

\newenvironment{mycpctitem}[1][---]
{\begin{list}{#1}
{\settowidth{\labelwidth}{#1}
\settowidth{\labelsep}{~}
\setlength{\leftmargin}{\labelwidth+\labelsep}
\setlength{\partopsep}{0cm}
\setlength{\topsep}{0cm}
\setlength{\parsep}{0cm}
\setlength{\itemsep}{0mm}}
}
{\end{list}}


\makeatletter


\def\@removefromreset#1#2{\let\@tempb\@elt
   \def\@tempa#1{@&#1}\expandafter\let\csname @*#1*\endcsname\@tempa
   \def\@elt##1{\expandafter\ifx\csname @*##1*\endcsname\@tempa\else
         \noexpand\@elt{##1}\fi}%
   \expandafter\edef\csname cl@#2\endcsname{\csname cl@#2\endcsname}%
   \let\@elt\@tempb
   \expandafter\let\csname @*#1*\endcsname\@undefined}


\setcounter{secnumdepth}{5}
\setcounter{tocdepth}{2}

\@addtoreset{paragraph}{section}
\@addtoreset{paragraph}{part}
\@removefromreset{paragraph}{subsubsection}
\@removefromreset{paragraph}{subsection}


\let\c@equation\c@subparagraph

\makeatother


\renewcommand{\theparagraph}{(\arabic{section}.\arabic{paragraph})}
\renewcommand{\thesubparagraph}
{(\arabic{section}.\arabic{paragraph}.\arabic{subparagraph})}

\titleformat{\part}[display]{\normalfont\Large\bfseries}%
{\partname}{0cm}{}

\titleformat{\section}[hang]{\normalfont\Large\bfseries}{}{0cm}%
{\thesection \  --\ }

\titleformat{\subsection}[hang]{\normalfont\large\bfseries}{}{0cm}%
{\thesubsection \  --\ }


\newcommand{\spcifnec}[1]
{\ifx#1\empty
\else ~#1.
\fi}

\titleformat{\paragraph}[runin]{\normalfont\bfseries}
{\theparagraph}{0cm}{\spcifnec}
\titlespacing{\paragraph}{0cm}
{2.75ex plus 1ex minus .2ex}
{.5em}

\titleformat{\subparagraph}[runin]{\it}
{\thesubparagraph}{0cm}{\spcifnec}
\titlespacing{\subparagraph}{0cm}
{0mm}
{.5em}

\makeatletter
\let\coresubpar\subparagraph
\def\subparagraph{\@ifnextchar[{\@varsubpar}{\coresubpar}}
\def\@varsubpar[]#1{\coresubpar{}{\it \ \kern -.45em #1}}
\let\intersubpar\subparagraph
\def\subparagraph{\@ifnextchar*{\@starredsubpar}{\intersubpar}}
\def\@starredsubpar*{\@ifnextchar[{\@varstarredsubpar}{\@plainstarredsubpar}}
\def\@varstarredsubpar[]#1{\par\noindent {\it #1}}
\def\@plainstarredsubpar#1{\par\noindent {\it #1.}}

\let\corepar\paragraph
\def\paragraph{\@ifnextchar[{\@varpar}{\corepar}}
\def\@varpar[]#1{\corepar{}{\bf \ \kern -.45em #1}}
\let\interpar\paragraph
\def\paragraph{\@ifnextchar*{\@starredpar}{\interpar}}
\def\@starredpar*{\@ifnextchar[{\@varstarredpar}{\@plainstarredpar}}
\def\@varstarredpar[]#1{\bigskip\par\noindent {\bf #1}}
\def\@plainstarredpar#1{\bigskip\par\noindent {\bf #1.}}

\makeatother


%
{\titleformat{\section}[hang]{\normalfont\large\bfseries}{}{0cm}{}
\titleformat{\subsection}[hang]{\normalfont\bfseries}{}{0cm}{}
\renewcommand{\theparagraph}{(\Alph{paragraph})}

\maketitle
}{}

\newenvironment{closing}%
{\titleformat{\section}[hang]{\normalfont\large\bfseries}{}{0cm}{}
\setlength{\itemsep}{0mm}
\small
}
{}

\makeatletter
\renewcommand\@maketitle{%
  \newpage
  \begin{center}%
  \let \footnote \thanks
    {\Large \bf \@title \par}%
    \vskip 1em%
    {\large
      \begin{tabular}[t]{c}%
        \@author
      \end{tabular}\par}%
  \end{center}%
  \par
  \vskip 1.5em}
\makeatother

\renewenvironment{abstract}
{\small \quotation
\noindent {\bf Abstract.}}{\endquotation \vskip 1cm}



\newcommand{\N}{\mathbf{N}}
\renewcommand{\P}{\mathbf{P}}
\newcommand{\Gr}{\mathbf{Gr}}
\newcommand{\PH}{\mathbf{P}\kern -.05em \mathrm{H}}
\newcommand{\Aut}{\operatorname{Aut}}

\newcommand{\F}{\mathbf{F}}

\newcommand{\T}{\mathbf{T}}
\newcommand{\G}{\mathbf{G}}
\renewcommand{\O}{\mathcal{O}}
\newcommand{\I}{\mathcal{I}}

\renewcommand{\L}{\mathcal{L}}
\newcommand{\M}{\mathcal{M}}
\newcommand{\K}{\mathcal{K}}

\newcommand{\codim}{\mathrm{codim}}
\newcommand{\im}{\mathrm{im}}
\newcommand{\Cliff}{\mathrm{Cliff}}

\newcommand{\Pic}{\mathrm{Pic}}


\newcommand{\PGL}{\mathrm{PGL}}
\newcommand{\Aff}{\mathrm{Aff}} 

\newcommand{\cHom}{{\cal H}\kern -.08em om} 
\newcommand{\cExt}{{\cal E}\kern -.1em xt} 

\newcommand{\vect}[1]{\langle #1 \rangle} 
\newcommand{\lineq}{\sim} 
\renewcommand{\mod}{\ \textrm{mod}\ } 

\DeclareMathOperator{\cork}{cork}

\DeclareMathOperator{\expdim}{expdim}



\newcommand{\SL}{\mathrm{SL}}

\renewcommand{\L}{\mathcal{L}}

\let\congru\equiv



\newcommand{\dlbrack}{[ \kern -.4ex [}
\newcommand{\drbrack}{] \kern -.4ex ]}

\newcommand{\trsp}[1]{\vphantom{#1}^{\mathsf T\!} #1}
\newcommand{\restr}[2]{\left. #1 \right| _{#2}}

\makeatletter
\def\@orthpar[#1]{(#1)^\perp}
\def\@orthst#1{#1^\perp}
\def\orth{\@ifnextchar[{\@orthpar}{\@orthst}}
\makeatletter

\makeatletter
\def\@dualpar[#1]{(#1)^\vee}
\def\@dualst#1{#1^\vee}
\def\dual{\@ifnextchar[{\@dualpar}{\@dualst}}
\makeatletter




\newcommand{\bx}{\mathbf x}
\newcommand{\by}{\mathbf y}

\def\bT{\mathbf{T}}


\newcommand{\fl}{\mathfrak{l}}


%
\renewcommand{\epsilon}{\varepsilon}
\renewcommand{\geq}{\geqslant}
\renewcommand{\leq}{\leqslant}

\def\subset{\subseteq}

%
\newcommand{\ie}{i.e.,\ } 
\newcommand{\eg}{e.g.,\ }
\newcommand{\noeud}{n{\oe}ud}
\newcommand{\noeuds}{n{\oe}uds}
\makeatletter
\def\noeud{\@ifnextchar.{n{\oe}ud}{\@ifnextchar,{n{\oe}ud}{n{\oe}ud\ }}}
\def\noeuds{\@ifnextchar.{n{\oe}uds}{\@ifnextchar,{n{\oe}uds}{n{\oe}uds\ }}}
\makeatother
\def\?{?\kern -.08em ?}
\def\wtf{?\kern -.08em !}


\newcommand{\Kprim}{\mathcal{K}^{\mathrm{prim}}}
\newcommand{\Kcan}{\mathcal{K}^{\mathrm{can}}}
\newcommand{\KC}{\mathcal{KC}}

\newcommand{\Spin}{\mathrm{Spin}}
\newcommand{\LG}{\mathrm{LG}}

\newcommand{\Sp}{\mathrm{Sp}}
\def\Gr{\G}



\def\VW#1{\mathcal{V\kern -.1em W}_{#1}}

\let\ZZ\Z

\def\IC{\mathcal{IC}}

\def\sKC{K\kern -.15em C}
\let\CC\C

\let\SS\S
\def\S{\mathcal{S}}

\def\T{\mathcal{T}}

\title{{$K3$} curves with index {$k>1$}}
\author{Ciro Ciliberto and Thomas Dedieu}

\begin{document}


\renewcommand{\O}{\mathcal{O}}

\setdefaultenum{(i)}{}{}{}
\setdefaultitem{---}{}{}{}

\maketitle

\centerline{\emph{Dedicated to Fabrizio Catanese on the occasion of his 70th birthday,}} \centerline{\emph{with esteem, admiration and friendship.}}

\bigskip
\begin{abstract}
Let $\KC_g ^k$ be the moduli stack of pairs $(S,C)$ with $S$ a $K3$
surface and $C\subset S$ a genus $g$ curve with divisibility $k$ in
$\Pic(S)$. In this article we study the forgetful map $c_g^k:(S,C)
\mapsto C$ from $\KC_g ^k$ to $\M_g$ for $k>1$.
First we compute by geometric means the dimension of its general
fibre. This turns out to be interesting only when $S$ is a complete
intersection or a section of a Mukai variety. In the former case we
find the existence of interesting Fano varieties extending $C$ in its
canonical embedding. In the latter case this is related to delicate
modular properties of the Mukai varieties.
Next we investigate whether $c_g^k$ dominates the locus in $\M_g$ of
$k$-spin curves with the appropriate number of independent sections.
We are able to do this only when $S$ is a complete intersection, and
obtain in these cases some classification results for spin curves.
\end{abstract}

\section{Introduction}

We are interested in the forgetful maps
$c_g ^k : \KC_g ^k \to \M_g$
with $k>1$ and $g=1+(g_1-1)k^2$ for some integer $g_1>1$,
in the following notation (valid also in case $k=1$):
\smallskip
\begin{mycpctitem}
\item  $\K_g^k$ is the moduli stack of polarised $K3$ surfaces of
  genus $g$ and index $k$, 
\ie pairs $(S,L)$ such that $S$ is a smooth $K3$ surface and $L$ is
an ample, globally generated
line bundle on $S$ with $L^ 2=2g-2$, such that $L=kL_1$ with $L_1$ a
primitive line bundle on $S$;
note that $(S,L_1)$ is a membre of $\K_{g_1} ^1$, which we usually
denote by $\Kprim _{g_1}$;
\item $\KC_g^k$ is
the moduli stack of pairs $(S,C)$
with $C$ a smooth curve on $S$ and $(S,\O_S(C))\in \K_g^k$;
\item $\M_g$ is the moduli space of curves of genus $g$.
\end{mycpctitem}
Specifically our goal is to describe the general fibre and the image
of the maps $c_g^k$.

In the primitive case $k=1$ the situation is rather well understood
and should be well-known by now, so we don't dwell on this here
(the interested reader may consult, \eg \cite[\SS 2]{cds} to find the
relevant references), and concentrate on the case $k>1$.

If $C$ is a smooth curve of genus $g\geq 11$ and Clifford index
$\Cliff(C)>2$ lying on a $K3$ surface, it follows from work of
Wahl \cite{wahl97},
Arbarello--Bruno--Sernesi \cite{abs1},
and eventually the authors and Sernesi \cite{cds}
that the fibre of the map
\[
  \textstyle
  c_g: [S,C] \in \coprod_k \KC_g^k \longmapsto [C] \in \M_g
\]
(which is the aggregation of the maps $c_g^k$ for all $k$ such
that there exists $g_1 >1$ with $g=1+(g_1-1)k^2$)
is essentially a linear space, of dimension
$\nu =\cork(\Phi_C)-1$ (where $\Phi_C$ is the Gauss--Wahl map of $C$,
see \cite{cds} for background and references),
and there exists
an arithmetically Gorenstein normal variety
$X \subset \P^{g+\nu}$ of dimension $\nu+2$ with
$\omega_X=\mathcal O_X(-\nu)$,
having as a linear section the curve $C$ in its canonical embedding,
as well as all $K3$ surfaces $S$ containing $C$, embedded by the linear
system $|C|_S$; we call such an $X$ the \emph{universal extension} of
$C$.
(Recall that an \emph{$r$-extension} (or simply extension, if $r=1$) of
a projectively embedded variety $V \subset \P^N$ is a variety
$X \subset \P^{N+r}$ having $V$ as a linear section; an extension is
said to be trivial if it is a cone; again, see
\cite{cds} for background and references).

We call \emph{$K3$ curve of index $k$} (and genus $g$)
a curve $C$ which is in the image of the map $c_g^k$.
For general such curves with $k>1$ the corank of the Gauss--Wahl map
has been computed by cohomological methods in \cite{CLM98},
with some exceptions for $g_1=2$ documented in a corrigendum.
In this article we compute by geometric considerations the dimension
of the general fibre of the maps $c_g^k$, $k>1$, thus concluding the
task begun in \cite{cd-double} with the case $g_1=2$.

It turns out to be interesting only for low values of $g_1$, for which
the $K3$ surfaces are either complete intersections or sections of
more general homogeneous varieties found by Mukai and thence called
\emph{Mukai varieties} (see Section~\ref{S:Mukai}).
Indeed, it follows from Prokhorov's bound on the genus of Gorenstein
Fano threefolds \cite{prokhorov} that the maps $c_g^k$ are generically
injective if $g>37$, see \cite[Cor.~2.10]{cds}.

\begin{proposition}
\label{pr:condCDS}
Let $k>1$ and $g_1>1$ be integers, and set 
$g=1+(g_1-1) k^2$.
Let $[C]$ be very general in the image of $c _g ^k$.
\subparagraph{}
\label{pr:table-i}
We have $\Cliff(C) >2$ and $g(C)\geq 11$ unless we are in one of the
following cases:
\begin{mycpctitem}
\item $g_1=2$ and $k\leq 3$;
\item $g_1=3$ and $k=2$.
\end{mycpctitem}
\subparagraph{}
\label{pr:table-ii}
We have $g(C)>37$ unless we are in one of the
following cases:
\begin{center}
\begin{tabular}{c|lllllllll}
$g_1$    & $2$ & $3$ & $4$ & $5$ & $6$ & $7$ & $8$ & $9$ & $10$
\\ \hline
\raisebox{0mm}[3.5mm]{$k \leq$} & $6$ & $4$ & $3$ & $3$ & $2$
 & $2$ & $2$ & $2$ & $2$
\end{tabular}
\end{center}
\end{proposition}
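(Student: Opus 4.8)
The plan is to reduce both assertions to the arithmetic of the function $g=g(C)=1+(g_1-1)k^2$, together with a single geometric input: the Clifford index of the very general $K3$ curve of index $k$. Note first that for \emph{every} pair $(S,C)\in\KC_g^k$ with $C$ smooth, writing $\O_S(C)=kL_1$ with $L_1$ primitive gives $L_1^2=(2g-2)/k^2=2g_1-2$, so $g(C)=1+(g_1-1)k^2$ unconditionally; thus the statements about $g(C)$ require no genericity. Secondly, $\KC_g^k$ is irreducible (it fibres over $\K_g^k$ by projective bundles, and $\K_g^k$ is governed by $\Kprim_{g_1}$), so a very general $[C]$ in the image of $c_g^k$ enjoys every property held by a general $C\in|kC_1|$ on a $K3$ surface $S$ with $\Pic(S)=\mathbf{Z}C_1$, $C_1^2=2g_1-2$: indeed Brill--Noether loci are closed, so $\{[C]\in\im c_g^k:\Cliff(C)>2\}$ is the image of an open subset of $\KC_g^k$, hence — being constructible and, once nonempty, dense — contains a dense open subset of $\overline{\im c_g^k}$.

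The second assertion is now purely numerical: $g(C)>37$ is equivalent to $(g_1-1)k^2\geq 37$, so it fails exactly when $(g_1-1)k^2\leq 36$; for $k>1$ this forces $g_1\leq 10$ (since $(g_1-1)\cdot 4>36$ for $g_1\geq 11$) and, for each $g_1$, bounds $k$ by $\lfloor\sqrt{36/(g_1-1)}\rfloor$, which reproduces the displayed table. Similarly the part of the first assertion concerning $g(C)\geq 11$ is the condition $(g_1-1)k^2\geq 10$, whose failure holds precisely for $(g_1,k)\in\{(2,2),(2,3),(3,2)\}$.

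It remains to handle $\Cliff(C)>2$. For the upper bound, restrict $C_1$ to $C$: using adjunction $\omega_C=\O_C(C)$ and $h^0(\O_S(jC_1))=2+j^2(g_1-1)$ (together with $h^1(\O_S(jC_1))=0$ on a $K3$ of Picard rank one), one checks that $\O_C(C_1)$ contributes to the Clifford index with $\Cliff\,\O_C(C_1)=C_1\cdot C-2g_1=2(g_1-1)(k-1)-2$, so $\Cliff(C)\leq 2(g_1-1)(k-1)-2$. For the matching lower bound I would invoke Green--Lazarsfeld's theorem on curves on $K3$ surfaces: on $S$ with $\Pic(S)=\mathbf{Z}C_1$, a general $C\in|kC_1|$ is either Clifford-general, so $\Cliff(C)=\lfloor(g-1)/2\rfloor$, or else $\Cliff(C)$ is computed by the restriction to $C$ of some $jC_1$; since $\Cliff\,\O_C(jC_1)=2(g_1-1)j(k-j)-2\geq 2(g_1-1)(k-1)-2$ for $1\leq j\leq k-1$, we obtain
\[
\Cliff(C)=\min\bigl\{\,\lfloor(g-1)/2\rfloor,\ 2(g_1-1)(k-1)-2\,\bigr\}.
\]
Hence $\Cliff(C)>2$ amounts to $(g_1-1)(k-1)\geq 3$ (one checks this already implies $g\geq 11$), which fails precisely for $(g_1,k)\in\{(2,2),(2,3),(3,2)\}$. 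Combining with the genus count, the exceptional set of the first assertion is exactly this three-element set, \ie $\{g_1=2,\,k\leq 3\}\cup\{g_1=3,\,k=2\}$. As a sanity check, in these three cases $C$ is respectively a hyperelliptic genus-$5$ curve on a double plane, a bielliptic genus-$10$ curve on a double plane, and a $(2,4)$ complete intersection of genus $9$ on a quadric — all of Clifford index $\leq 2$.

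The step I expect to require the most care is the lower bound on $\Cliff(C)$: one must apply Green--Lazarsfeld with its precise hypotheses (constancy of the Clifford index in $|kC_1|$, and existence of a computing line bundle on $S$ in the non-Clifford-general case), combine it with the fact that the very general polarised $K3$ of index $k$ has Picard rank one, and ensure the Brill--Noether/semicontinuity argument genuinely transports the conclusion from the general surface down to a very general curve in $\im c_g^k$. Everything else is elementary bookkeeping.
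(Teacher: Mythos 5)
Your proof is correct and follows essentially the same route as the paper: the genus bounds in both parts are the same direct arithmetic on $g=1+(g_1-1)k^2$, and your Clifford index computation is exactly the paper's Lemma~\ref{l:cliff} (Green--Lazarsfeld plus Picard rank one on the very general polarized $K3$, giving $\Cliff(C)=(2g_1-2)(k-1)-2$), with your semicontinuity remarks merely making explicit the passage from the general $C\in|kL_1|$ to the very general point of $\im(c_g^k)$. The only slip is in your parenthetical sanity check for $(g_1,k)=(2,3)$: the general such curve is a plane sextic (as the paper itself records in \ref{par:6ic}), not a bielliptic curve, though its Clifford index is indeed $2$, so nothing in the argument is affected.
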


\bigskip
Part~\ref{pr:table-i} follows from Lemma~\ref{l:cliff} and it tells us that
except for the exceptional cases
$(g_1,k) \in \{ (2,2), (2,3), (3,2) \}$
the general theory of \cite{wahl97,abs1,cds} applies.
In the exceptional cases we have been able to analyze the situation by
hand in \cite{cd-double}; we obtain a little more information here.
Part~\ref{pr:table-ii} follows from a direct computation, and tells us
the values of $(g_1,k)$ for which it is possible that $c_g^k$ has
positive dimensional general fibre.
We now give a brief summary of the results we obtain in these cases.

\paragraph{Complete intersections
{\normalfont (Section~\ref{S:CI})}}
For $g_1 \in \{3,4,5\}$ the general membre $C$ of the image of $c_g^k$, with 
$g=1+(g_1-1)k^2$, has a model as a complete intersection in
$\P^{g_1}$. This allows for a direct description of the fibre
$(g_g^k)^{-1}(C)$; in particular we can compute its dimension, which
gives $\cork(\Phi_C)$, thus recovering the values obtained in
\cite{CLM98}.

In all cases, we provide a description of the universal extension $X$
as a complete intersection in a weighted projective space, embedded by
a suitable divisor in $\Pic(X)$ of the anticanonical class.
In fact this also works in the exceptional cases of \ref{pr:table-i},
in which cases we obtain interesting examples showing the sharpness of
Lvovski's main theorem in \cite{lvovskiII}.
This construction has been inspired by the corresponding one in case
$g_1=2$, which has been communicated to us by Totaro
(see \cite[(4.8)]{cd-double}).

\paragraph{Mukai models
{\normalfont (Section~\ref{S:Mukai})}}
For $g_1 \in \{6,7,8,9,10\}$ the general membre $C$ of the image of
$c_g^k$, with $g=1+(g_1-1)k^2$, has a model as a complete intersection in
a Mukai homogeneous variety $M_{g_1}$.
For $g_1 \neq 6$ in this range, we show by
geometric considerations that the maps $c_g^k$ are generically
injective for all $k>1$. For $g_1=6$ and $k=2$ the dimension of the
general fibre is $1$, see \ref{p:g6.dimfibre}.
On our way we obtain results about the related question whether all
models of $C$ are 
conjugate under the action of $G=\Aut(M_{g_1})$,
see Conjecture~\ref{conj:Mukai} and summary of results in
\ref{p:mukai.results}. 

We also show that in cases
(a) $k=1$ and $g_1>6$, and (b) $k=2$ and $g_1=6$,
the Mukai variety is the universal extension of the general membre of
$\im(c_g^k)$; this was certainly a natural thing to guess, but as far
as we know this had not been proved yet.

A more detailed but still synthetic description of our results about
Mukai varieties and their sections is given in
subsection~\ref{s:mukai.intro}.

\bigskip
Our study (in Section~\ref{S:spin})
of the image of the maps $c_g^k$ on the other hand
is based on the following observation.

\paragraph{$K3$ curves of index $k$ and $k$-spin curves}
For $g=1+k^2(g_1-1)$, the map $c_g^k$ factorizes through 
\[
(S,C) \in \KC_g ^k
\longmapsto
\textstyle (C,\restr {\O_S(\frac 1 k C)} C)
\in \S _g ^{\frac 1 k, g_1},
\]
where
\begin{mycpctitem}
\item $\S _g ^{\frac 1 k, g_1}$ is the moduli stack of
\emph{$k$-spin curves}
of genus $g$ having at least $g_1+1$ independent sections,
\ie pairs $(C,\theta)$ with $C$ a smooth genus $g$ curve and $\theta$ a line
bundle on $C$ such that $k\theta=K_C$ and $h^0(\theta)\geq g_1+1$.
\end{mycpctitem}
A line bundle $L$ such that $2L=K_C$ is called a
theta-characteristic (hence the notation).
Moreover we denote by
\begin{mycpctitem}
\item 
\raisebox{0mm}[5mm]{$\T _g ^{\frac 1 k, g_1}$} the image of 
$\S _g ^{\frac 1 k,
  g_1}$ in $\M_g$ by the (finite) forgetful map $(C,L)\mapsto C$,
\end{mycpctitem}
so that
\begin{equation}
\label{eq:r-spin-img}
\im(c _g ^k) \subset \T _g ^{\frac 1 k, g_1}.
\end{equation}
The question underlying our study in Section~\ref{S:spin} is whether
equality holds in \eqref{eq:r-spin-img}.

For $k=2$  there is an
expected codimension for
$\T _g ^{\frac 1 2, g_1}$ in $\M_g$, viz.\ $\binom {g_1+1} 2$;
more precisely, the following holds.

\begin{theorem}[(\cite{mumford} and {\cite[Cor.~1.11]{harris-theta}})]
\label{t:harris}
Let $(C,\theta) \in \S _g ^{\frac 1 2,  g_1}$ be such that
$h^0(\theta)=g_1+1$. Then for any deformation $(C_t,\theta_t)$ of $(C,\theta)$ one has
$h^0(\theta_t) \congru h^0(\theta) \mod 2$, and the codimension $c$ of 
$\S _g ^{\frac 1 2,  g_1}$ in $\S _g ^{\frac 1 2}$
at $(C,\theta)$ is $c \leq \binom {g_1+1}2$.
\end{theorem}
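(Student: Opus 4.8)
\medskip\noindent
The plan is to work locally on $\S_g^{\frac12}$ — which is smooth of dimension $3g-3$, since $\S_g^{\frac12}\to\M_g$ is \'etale — and to exhibit $\S_g^{\frac12,g_1}$ there as a symmetric determinantal locus. First I would take an \'etale neighbourhood $\pi\colon\mathcal C\to B$ of $(C,\theta)$ in $\S_g^{\frac12}$, with the relative theta-characteristic $\Theta$, so $\Theta^{\otimes2}\cong\omega_{\mathcal C/B}$. By cohomology and base change $R\pi_*\Theta$ is represented near the base point by a two-term complex of finite locally free $\O_B$-modules; relative duality — through $\Theta^\vee\otimes\omega_{\mathcal C/B}\cong\Theta$, whence $R^1\pi_*\Theta\cong(\pi_*\Theta)^\vee$ — lets one take, following Mumford \cite{mumford} (as used by Harris \cite{harris-theta}), that complex self-dual, namely of the form $\bigl[\,\mathcal E\xrightarrow{\ \varphi\ }\mathcal E^\vee\,\bigr]$ with $\varphi=\varphi^\vee$ symmetric. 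Passing to a minimal such complex at the base point and shrinking $B$, we may assume $\mathcal E$ free of rank $m:=h^0(\theta)=g_1+1$ and $\varphi$ vanishing there; then $h^0(\theta_t)=\cork\varphi_t$ for every $t\in B$, so that, set-theoretically near $(C,\theta)$, $\S_g^{\frac12,g_1}=\{\,t\in B:\varphi_t=0\,\}$.

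The parity assertion is where the real content lies, for the symmetric complex on its own does \emph{not} control the parity of $\rk\varphi_t$ in a family: here one invokes Mumford's theorem \cite{mumford} that $h^0(\theta)\bmod2$ is a deformation invariant. Its mechanism is that $h^0(\theta)\bmod2$ is the Arf invariant of the $\Zint/2$-valued quadratic form $q_\theta$ on $H^1(C,\Zint/2)$ refining the cup-product pairing, pinned down by the Riemann--Mumford relation $h^0(\theta\otimes\alpha)\congru h^0(\theta)+q_\theta(\alpha)\bmod2$ and the count of even versus odd theta-characteristics; as $q_\theta$ is the mod-$2$ reduction of the spin structure, it is locally constant along $\pi$, hence so is its Arf invariant (equivalently: $\S_g^{\frac12}$ has exactly two connected components, distinguished by parity). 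This gives the first assertion, $h^0(\theta_t)\congru g_1+1\bmod2$ for all $t\in B$; in particular no $t$ near the base point has $h^0(\theta_t)=g_1$, so \emph{locally}
\[
\S_g^{\frac12,g_1}=\{\,t:h^0(\theta_t)\geq g_1+1\,\}=\{\,t:h^0(\theta_t)\geq g_1\,\}=\{\,t:\rk\varphi_t\leq1\,\}.
\]

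It remains to bound the codimension of this last, symmetric, degeneracy locus. In local trivialisations of $\mathcal E$ the morphism $\varphi$ is a symmetric $m\times m$ matrix of regular functions on $B$, and $\{\rk\varphi_t\leq1\}$ is cut out by its $2\times2$ minors. By the classical bound on the height of symmetric determinantal ideals (J\'ozefiak, Kutz) — equivalently, the fact that the locus of symmetric $m\times m$ matrices of rank $\leq1$, a cone over a Veronese, has codimension $\binom{m}{2}$ — the ideal generated by the $2\times2$ minors of any symmetric $m\times m$ matrix over a Noetherian ring has height at most $\binom{m}{2}$. Hence every component of $\{\rk\varphi_t\leq1\}$ has codimension at most $\binom{m}{2}=\binom{g_1+1}{2}$ in $B$, so $c\leq\binom{g_1+1}{2}$.

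The step I expect to be the main obstacle is the parity one. The linear-algebraic input above, applied directly to $\{\varphi_t=0\}$, yields only the weaker bound $\binom{g_1+2}{2}$ — the codimension of the origin in the space of symmetric $(g_1+1)\times(g_1+1)$ matrices — and the sharpening to $\binom{g_1+1}{2}$ rests entirely on being able to trade ``$h^0\geq g_1+1$'' for ``$h^0\geq g_1$'' locally, which is precisely the parity invariance: a global, topological fact (Arf invariants of spin structures; the two components of $\S_g^{\frac12}$), not something extractable from the deformation set-up alone. A secondary technicality is arranging, out of relative duality, the self-duality $\varphi=\varphi^\vee$ used in the first step.
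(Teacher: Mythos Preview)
Your proof is correct and follows precisely the approach of the cited references (Mumford for the parity invariance, Harris for the symmetric determinantal bound). Note, however, that the paper does not give its own proof of this statement: it is quoted as a known result with attribution to \cite{mumford} and \cite[Cor.~1.11]{harris-theta}, so there is nothing in the paper to compare against beyond those citations, and your argument is exactly the one found there.
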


\medskip
However several examples are known of superabundant components of 
$\S _g ^{\frac 1 2,  g_1}$, see \eg \cite{farkas-theta}.
Here we observe that $K3$ curves of index
$2$ provide infinitely many such examples.

\paragraph{Observation}
We have the following remarkable formula:
\begin{align}
\notag
\dim (\KC_g ^2)-\expdim(\T _g ^{\frac 1 2, g_1})
&= \textstyle
(19+g)-(3g-3)+\binom {g_1+1}2 \\
\label{expdim-spin} &= \textstyle
\frac 1 2 (g_1-7)(g_1-8).
\end{align}
Since for $g_1 \geq 7$ the map $c_g^2$ is generically injective
(Corollary~\ref{c:cork}), this
formula implies that the  image of $c_g^2$ lies in a superabundant
component of $\S _g ^{\frac 1 2,  g_1}$ as soon as $g_1\geq 9$.

Moreover this difference equals the dimension of the general fibre of
$c_g^2$ for $3 \leq g_1 \leq 6$ (see Section~\ref{S:CI} and
paragraph~\ref{p:g6.dimfibre}). For $3 \leq g_1 \leq 5$ the closure of
$\im(c_g^2)$ is an irreducible component of $\T _g ^{\frac 1 2, g_1}$
(Proposition~\ref{prop:compint}) which thus has the expected dimension.

This makes the question whether the
closure of $\im(c_g^2)$ is an irreducible component of
$\T _g ^{\frac 1 2, g_1}$ for $g_1=6,7,8$ rather intriguing.

In the complete intersection cases, we can prove equality in
\eqref{eq:r-spin-img} (see Proposition \ref {prop:compint} below), and then it is possible to check that 
$\T _g ^{\frac 1 2, g_1}$ has a component of the expected dimension.

\paragraph{Classification of $K3$-like spin curves
{\normalfont (Section~\ref{S:spin})}}
We make a thorough analysis of the components of $\S _g ^{\frac 1 2,
g_1}$ in the cases $g_1=3$ and $g_1=4$ (Theorems \ref {thm:g3} and
\ref {prop:comp}), and give some partial information for $g_1=5$ (see
Proposition \ref {prop:g1=5}). In the absence of an estimate for the
codimension of $\T _g ^{\frac 1 k, g_1}$ in $\M_g$ for $k\geq 3$, the
cases $k\geq 3$ are more complicated, and we give only some partial
answers for $k=3$ and $3\leq g_1\leq 4$ (see Section \ref {sec:k3}).

This turns out to be unexpectedly involved, and is the occasion of
discovering several interesting families of curves on rational normal
scrolls in $\P^{g_1}$ for $g_1=3,4,5$.

\bigskip
\noindent
\textbf{Thanks.}
ThD thanks Laurent Manivel for his answers to the many questions he
was asked about homogeneous varieties,
and more generally for his interest and positive influence on this
project.
Additional thanks to Marian Aprodu for his help with the proof of
Lemma~\ref{l:mukai-cliff}.

\section{Preliminaries}
\label{S:prelim}

\paragraph{Notation}
When the context is clear, we use the shorthand
$h_n(k)$ for $h^0(\P^n,\O_{\P^n}(k))$.

\begin{lemma}
\label{l:cliff}
Let $(S,L_1)\in \Kprim_{g_1}$ be very general, so that
$\Pic(S)={\bf Z} \langle L_1\rangle$. 
For smooth $C \in |kL_1|$, $k>1$, we have
$\Cliff(C) = (2g_1-2)(k-1)-2$.
\end{lemma}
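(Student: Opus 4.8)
The plan is to establish the two inequalities $\Cliff(C)\leq (2g_1-2)(k-1)-2$ and $\Cliff(C)\geq (2g_1-2)(k-1)-2$ separately. For the first, I would exhibit the line bundle $A:=\restr{\O_S(L_1)}{C}$ on $C$. Since $K_S=\O_S$ one has $K_C=\restr{\O_S(kL_1)}{C}$, so $K_C-A=\restr{\O_S((k-1)L_1)}{C}$; and for any integer $m$ with $1\leq m\leq k-1$ the restriction sequence
\[
0\to \O_S((m-k)L_1)\to \O_S(mL_1)\to \restr{\O_S(mL_1)}{C}\to 0,
\]
together with $H^0(S,(m-k)L_1)=0$ and $H^1(S,(m-k)L_1)\cong H^1(S,(k-m)L_1)^\vee=0$ (Serre duality and Kodaira vanishing on the K3 surface $S$, as $(k-m)L_1$ is ample), gives $h^0(C,\restr{\O_S(mL_1)}{C})=h^0(S,mL_1)=2+m^2(g_1-1)$ by Riemann--Roch on $S$. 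Applying this with $m=1$ and $m=k-1$ yields $h^0(A)=g_1+1\geq 2$ and $h^0(K_C-A)=2+(k-1)^2(g_1-1)\geq 2$, so $A$ contributes to the Clifford index and $\Cliff(C)\leq \deg A-2(h^0(A)-1)=k(2g_1-2)-2g_1=(2g_1-2)(k-1)-2$.

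For the reverse inequality, I would first observe that this upper bound is strictly smaller than the generic Clifford index $\lfloor (g-1)/2\rfloor$: putting $a:=g_1-1\geq 1$ and using $g-1=k^2a$, the inequality $(2g_1-2)(k-1)-2<\lfloor k^2a/2\rfloor$ reduces to $a(k-2)^2+3>0$, which is evident. Hence $\Cliff(C)<\lfloor (g-1)/2\rfloor$, and the theorem of Green and Lazarsfeld on curves on K3 surfaces applies: $\Cliff(C)$ is computed by $\restr MC$ for some line bundle $M$ on $S$ with $h^0(S,M)\geq 2$ and $h^0(S,\O_S(kL_1)\otimes M^{-1})\geq 2$. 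Because $\Pic(S)=\Z\langle L_1\rangle$ we may write $M=mL_1$, and the two inequalities force $m\geq 1$ and $k-m\geq 1$, \ie $1\leq m\leq k-1$. By the computation above, $\Cliff(\restr MC)=mk(2g_1-2)-2(1+m^2(g_1-1))=2(g_1-1)\,m(k-m)-2$; since $m\mapsto m(k-m)$ attains its minimum over $\{1,\dots,k-1\}$ at the endpoints, where it equals $k-1$, we conclude $\Cliff(C)=\Cliff(\restr MC)\geq (2g_1-2)(k-1)-2$. Together with the upper bound this proves the formula.

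The main obstacle is the correct invocation of the Green--Lazarsfeld theorem: one needs the precise statement that a submaximal Clifford index of a smooth curve on a K3 surface is computed by the restriction of a line bundle \emph{coming from the surface} (an adapted line bundle), not merely that the Clifford index is constant in $|kL_1|$; one should also make sure that the possible exceptional curves of higher Clifford dimension are accounted for by that statement. The remaining ingredients are routine: Riemann--Roch, Serre duality and Kodaira vanishing on $S$, and the concavity of a quadratic in $m$.
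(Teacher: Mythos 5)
Your proposal is correct and follows essentially the same route as the paper: the Green--Lazarsfeld theorem reduces the computation to restrictions of multiples $lL_1$ with $1\leq l\leq k-1$, for which $\Cliff(\restr{lL_1}{C})=(2g_1-2)l(k-l)-2$ is minimized at the endpoints. The paper's proof is just a compressed version of yours (it leaves the cohomology computation on $S$ and the comparison with $\lfloor(g-1)/2\rfloor$ implicit), so no further comment is needed.
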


\begin{proof}
Let $C$ be a smooth membre of $|kL_1|$; it has genus
$g = 1+k^2(g_1-1)$.
By \cite{green-lazarsfeld}, the Clifford index of $C$ is either
$\lfloor \frac {g-1}2 \rfloor$ or it 
is computed by the restriction of a line bundle on $S$. 
Since $\Pic(S)={\bf Z} \langle L_1\rangle$, we have to consider those $lL_1 \in \Pic(S)$
such that $h^0(\restr {lL_1} C) \geq 2$ and $h^0(K_C-\restr {lL_1} C)\geq 2$,
which amounts to the condition that $1 \leq l <k$.
One computes
\begin{align*}
\Cliff(\restr {lL_1} C) 
= \deg(\restr {lL_1} C)-2(h^0(\restr {lL_1} C) -1)
= (2g_1-2)l(k-l)-2,
\end{align*}
the minimal value of which for integral $l \in [1,k-1]$ is 
$(2g_1-2)(k-1)-2$,
indeed less than $\lfloor \frac {g-1}2 \rfloor$.
\end{proof}

\bigskip
In this article we use freely the notion of ribbon over a curve and
its relation with the Gauss--Wahl map;
we refer to \cite[\SS 4]{cds} and \cite[\SS 1.2]{cd-double} for the
necessary background.

\begin{proposition}
\label{pr:unicity-integral}
Let $C$ be a curve with $\Cliff(C)>2$. For any 
$v \in \ker (\trsp \Phi_C)$ seen as a ribbon over $C \subset \P ^{g-1}$, 
there exists at most one surface extension $S \subset \P^g$ up to
projectivities
(\ie the integral of $v$, if it exists, is unique).
\end{proposition}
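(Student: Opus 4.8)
The plan is to reduce the statement to a rigidity property of the ribbon $v$ seen as a first-order datum, and to leverage the hypothesis $\Cliff(C)>2$ through the surjectivity/injectivity results of \cite{cds} connecting Gauss--Wahl maps, ribbons, and surface extensions. Concretely, suppose $S_1 \subset \P^g$ and $S_2 \subset \P^g$ are two surface extensions of $C \subset \P^{g-1}$ both integrating the same $v \in \ker(\trsp\Phi_C)$. The first step is to recall the dictionary: an extension $S \subset \P^g$ of $C \subset \P^{g-1} = \P^{g-1} \subset \P^g$ determines, via its first infinitesimal neighbourhood along $C$ (or equivalently the normal direction in which $S$ leaves the hyperplane), a ribbon $\tilde S$ over $C$ supported on $\P^{g-1}$, and this ribbon is the one classified by the element $v \in \ker(\trsp\Phi_C)$; this is exactly the correspondence set up in \cite[\SS 4]{cds}. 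So both $S_1$ and $S_2$ give rise to the \emph{same} ribbon $\tilde S$ up to isomorphism over $C$.

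The second step is to show that a surface extension is determined by its associated ribbon up to projectivity. Here the point is that $S_i$ is arithmetically Cohen--Macaulay (being a $K3$-type linearly normal extension of a canonical curve with $\Cliff(C)>2$, hence projectively normal by Green--Lazarsfeld/Voisin-type results used in \cite{cds}), so $S_i$ is cut out by quadrics together with the curve's equations, and the homogeneous ideal $I_{S_i}$ is recovered from the ideal $I_C$ plus the degree-one piece of the normal data. More precisely, one recovers $H^0(\P^g, I_{S_i}(2))$ as the kernel of the natural map $H^0(\P^g,\O(2)) \to H^0(\tilde S, \O_{\tilde S}(2))$ — a quadric through $C$ extends to a quadric through $S_i$ iff it vanishes on the ribbon $\tilde S_i$, and since $\tilde S_1 \cong \tilde S_2$ as subschemes of $\P^g$ up to a projectivity fixing $\P^{g-1}$ pointwise (this is where uniqueness of $v$ up to the relevant scaling enters), we get $I_{S_1} = I_{S_2}$ after applying that projectivity. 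Then $S_1$ and $S_2$ are projectively equivalent by an automorphism of $\P^g$ restricting to the identity on the canonical curve, as desired.

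The main obstacle is the second step — precisely, controlling the passage from "same ribbon" to "same ideal of quadrics'' — and in particular checking that the quadrics through $C$ that extend to the ribbon already generate (modulo $I_C$) enough of $I_{S_i}$ to pin down $S_i$. This is where the hypothesis $\Cliff(C)>2$ is essential: it guarantees (via \cite{cds}) that $C \subset \P^{g-1}$ is projectively normal and cut out by quadrics, and that the extension is too, so that no higher-degree ambiguity survives; without it the argument breaks, which is consistent with the role of the exceptional cases $(g_1,k)\in\{(2,2),(2,3),(3,2)\}$ elsewhere in the paper. A secondary technical point is to make sure the identification of ribbons is canonical enough — i.e.\ that the scaling freedom in $v$ corresponds exactly to the action of projectivities of $\P^g$ fixing $\P^{g-1}$, so that "unique $v$'' and "unique $S$ up to projectivity'' match up without loss; this should follow formally from the construction of the correspondence $v \leftrightarrow \tilde S$ in \cite[\SS 4]{cds} and \cite[\SS 1.2]{cd-double}.
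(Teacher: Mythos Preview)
Your approach has a genuine gap in the second step. You claim that $H^0(\P^g, I_{S_i}(2))$ equals the kernel of $H^0(\P^g,\O(2)) \to H^0(\O_{\tilde S}(2))$, \ie that a quadric in $\P^g$ vanishes on $S_i$ if and only if it vanishes on the ribbon $\tilde S$. One inclusion is trivial, but the converse is not: a quadric tangent to $S_i$ along $C$ (hence containing $\tilde S$) need not contain all of $S_i$. So the ribbon alone does not visibly pin down $H^0(I_{S_i}(2))$, and your argument that $S_1$ and $S_2$ have the same ideal of quadrics does not go through as stated. Relatedly, you only invoke the fact that $C$ is cut out by quadrics (property $N_1$), whereas what is actually needed is the stronger property $N_2$ (quadratic equations with \emph{linear syzygies}); this is what $\Cliff(C)>2$ buys via Green--Lazarsfeld, and you never use the syzygy condition. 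You also assume without justification that an arbitrary surface extension $S_i$ is arithmetically Cohen--Macaulay and cut out by quadrics; the statement makes no such hypothesis on $S$.

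The paper's proof, taken from \cite[Remark~4.8 and \SS 4.9]{cds}, proceeds differently and avoids these issues. Wahl's deformation-theoretic construction shows that any two surface extensions of $C$ integrating the same ribbon $v$ differ by an element of $H^0(N_{C/\P^{g-1}}(-2))$. When $C$ satisfies property $N_2$, one has $H^0(N_{C/\P^{g-1}}(-2))=0$ by \cite[Lemma~3.6]{cds}, and uniqueness follows at once. No assumption on the surface $S$ is needed; everything is read off the intrinsic projective geometry of the canonical curve.
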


\begin{proof}
It is given in \cite[Remark~4.8]{cds}.
There one finds the additional assumption that $g\geq 11$, but this is
useless as far as unicity is concerned (it is needed only to ensure
the existence of an integral). 

What is really needed is that $C \subset \P^{g-1}$ is defined by
quadratic equations with linear syzygies, which is ensured by the
condition on the Clifford index. Under this assumption, one can indeed
work out Wahl's deformation construction, and then one sees 
\cite[\SS 4.9]{cds} that any two
extensions differ by an element of $H^0(N_{C/\P^{g-1}}(-2))$.
The conclusion stems from the fact that the latter cohomology group is
$0$ if $C \subset \P^{g-1}$ is defined by quadratic equations with
linear syzygies (see \cite[Lemma 3.6]{cds}).
\end{proof}

\begin{proposition}
\subparagraph{}
\label{pr:finite-auto-pol}
Let $(S,L)$ be a polarized $K3$ surface.
Then the automorpshisms group of $(S,L)$ is finite.
\subparagraph{}
\label{pr:no-auto-vgK3}
Let $S$ be a projective $K3$ surface with 
$\Pic(S) = \ZZ\langle L_1\rangle$. Then
$\Aut(S)$ is trivial if $(L_1)^2>2$, and
isomorphic to $\ZZ/2$ if $(L_1)^2=2$.
\end{proposition}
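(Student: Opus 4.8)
I would prove the two parts separately. For part~\ref{pr:finite-auto-pol} the plan is to exhibit $\Aut(S,L)$ as an affine algebraic group with vanishing Lie algebra. Fixing $n\geq 3$ so that $L^{\otimes n}$ is very ample, one embeds $S$ into $\P^N=\P(H^0(S,L^{\otimes n})^\vee)$; since $\Pic(S)$ is torsion-free, an automorphism $\sigma$ of $S$ has $\sigma^*L\cong L$ precisely when $\sigma^*L^{\otimes n}\cong L^{\otimes n}$, i.e.\ when $\sigma$ comes from an element of $\PGL_{N+1}$ preserving $S$, so $\Aut(S,L)$ is identified with the stabiliser of $S\subset\P^N$ in $\PGL_{N+1}$, a closed subgroup, hence an affine algebraic group of finite type. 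Its Lie algebra lies in $H^0(S,T_S)$, which vanishes since $T_S\cong\Omega^1_S$ (contract with a generator of $H^0(S,\omega_S)$) and $h^0(S,\Omega^1_S)=0$; in characteristic zero an affine algebraic group of finite type with trivial Lie algebra is finite. (Alternatively, Torelli injects $\Aut(S,L)$ into $\Orth(H^2(S,\ZZ))$, and the image, fixing $[L]$ and the Hodge structure, is a discrete subgroup of the compact group of isometries of $L^\perp\otimes\mathbf R$ preserving the positive-definite plane $H^{2,0}\oplus H^{0,2}$, hence is finite.)

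For part~\ref{pr:no-auto-vgK3} I would first replace $L_1$ by $-L_1$ if necessary so that it is ample, so that $L_1^2$ is an even positive integer. Every $\sigma\in\Aut(S)$ sends the ample class $L_1$ to an ample class, and $\sigma^*L_1=\pm L_1$ because $\Pic(S)=\ZZ\langle L_1\rangle$ and $\sigma^*$ is an isometry; hence $\sigma^*L_1=L_1$, so $\Aut(S)=\Aut(S,L_1)$ is finite by part~\ref{pr:finite-auto-pol}, $\sigma^*$ is the identity on $\Pic(S)$, and $\sigma^*$ restricts to a Hodge isometry of the transcendental lattice $T:=\Pic(S)^\perp\subset H^2(S,\ZZ)$, which has rank $22-\rho(S)=21$. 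Since by the strong Torelli theorem $\sigma\mapsto\sigma^*$ is injective and $\Pic(S)\otimes\Q$ together with $T\otimes\Q$ span $H^2(S,\Q)$, the problem reduces to deciding when $\sigma^*$ can act nontrivially on $T$.

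The heart of the matter — and the step I expect to require the most care — is then the following. The transcendental Hodge structure $T\otimes\Q$ is irreducible, since any nonzero sub-Hodge-structure must contain $H^{2,0}(S)$ (one entirely of type $(1,1)$ would lie in $\Pic(S)\otimes\Q\cap T\otimes\Q=0$); hence $E:=\mathrm{End}_{\mathrm{Hodge}}(T\otimes\Q)$ is a division algebra, and, acting faithfully on the one-dimensional space $H^{2,0}(S)$, it is necessarily commutative, so a number field equipped with a distinguished embedding $E\hookrightarrow\mathbf C$. For $\sigma\neq\id$ the operator $\sigma^*|_T\in E^\times$ is a root of unity of some order $d$, mapping under $E\hookrightarrow\mathbf C$ to $\sigma^*\omega/\omega$ (where $0\neq\omega\in H^{2,0}(S)$); then $T\otimes\Q$ is a vector space over $\Q(\zeta_d)\subset E$, so $[\Q(\zeta_d):\Q]=\varphi(d)$ (Euler's function) divides $\dim_\Q(T\otimes\Q)=21$. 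As $\varphi(d)$ is even for $d\geq3$ while $21$ is odd, only $d\in\{1,2\}$ survive, and since $d=1$ would give $\sigma^*|_T=\id$ and hence $\sigma=\id$, we must have $d=2$ and $\sigma^*=\id_{\Pic(S)}\oplus(-\id_T)$ on $H^2(S,\ZZ)$. Because $H^2(S,\ZZ)$ is unimodular, $\Pic(S)\oplus T$ sits in it as a finite-index sublattice glued along an anti-isometry $\gamma\colon A_{\Pic(S)}\xrightarrow{\sim}A_T$ of discriminant groups with $A_{\Pic(S)}\cong\ZZ/L_1^2$; compatibility of $\sigma^*$ with this gluing forces $\gamma=-\gamma$, hence $A_T$ to be $2$-torsion, hence $L_1^2=2$. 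This rules out nontrivial automorphisms when $L_1^2>2$. Finally, when $L_1^2=2$ the morphism $\phi_{|L_1|}\colon S\to\P^2$ is a double cover whose covering involution is a nontrivial automorphism of $S$ acting by $-1$ on $H^{2,0}(S)$, so together with the bound just obtained $\Aut(S)\cong\ZZ/2$. Apart from the discriminant-form bookkeeping, which is routine Nikulin lattice theory, each step is either soft or a one-line arithmetic check.
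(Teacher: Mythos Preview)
Your proof is correct. The paper itself gives no argument for this proposition: it simply refers to \cite[Chap.~5, Prop.~3.3]{huybrechts} for \ref{pr:finite-auto-pol} and \cite[Cor.~15.2.12]{huybrechts} for \ref{pr:no-auto-vgK3}, and your two arguments---finiteness via identifying $\Aut(S,L)$ with a linear algebraic group whose Lie algebra sits in $H^0(S,T_S)=0$, and the Picard-rank-one case via the action on the transcendental lattice, the divisibility constraint $\varphi(d)\mid 21$, and the gluing of discriminant groups forcing $(L_1)^2=2$---are precisely the standard ones found in those references.
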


\bigskip\noindent
See \cite[Cor.~15.2.12]{huybrechts} for \ref{pr:no-auto-vgK3},
and \cite[Chap.~5 Prop.~3.3]{huybrechts} for
\ref{pr:finite-auto-pol}.

\section{Complete intersections}
\label{S:CI}

These are the cases $g_1=3,4,5$.

\subsection{Useful results}

\begin{theorem}
\label{p:CI}
Let $Z,Z'$ be two complete intersections in some projective space
$\P^n$.
\subparagraph{}
\label{p:moduli-ic}
The two polarized schemes $(Z,\O(1))$ and $(Z',\O(1))$ are
isomorphic if and only if they are conjugate under the action of
$\PGL(n+1)$ on $\P^n$.
\subparagraph{}
\label{p:stab-ic}
If $Z$ is non-degenerate and smooth (resp.\ general), 
then $\Aut(Z,\O(1))$ is finite
(resp.\ trivial)
unless $Z$ is a quadric hypersurface
(resp. unless it is either a quadric hypersurface or the complete
intersection of two quadrics).
\end{theorem}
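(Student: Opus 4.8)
\textit{Part \ref{p:moduli-ic}.} The key input is that a complete intersection $Z=V(f_1,\dots,f_c)\subset\P^n$, with $\deg f_i=d_i$, is projectively normal: its homogeneous ideal is $(f_1,\dots,f_c)$ and the restriction maps $H^0(\P^n,\O(m))\to H^0(Z,\O_Z(m))$ are surjective for all $m\geq0$. Hence the section ring $R:=\bigoplus_{m\geq0}H^0(Z,\O_Z(1)^{\otimes m})$ is the homogeneous coordinate ring $\k[x_0,\dots,x_n]/(f_1,\dots,f_c)$, it is generated in degree~$1$, and the polarised scheme $(Z\subset\P^n,\O_Z(1))$ is recovered as $(\Proj R,\O(1))$ with its tautological embedding --- the degree-$1$ part of $R$ being $(n+1)$-dimensional because $Z$ is non-degenerate. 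Since forming section rings is functorial and commutes with tensor powers, an isomorphism of polarised schemes $(Z,\O_Z(1))\simeq(Z',\O_{Z'}(1))$ induces a graded $\k$-algebra isomorphism $R'\simeq R$; being determined by its degree-$1$ part, this isomorphism is induced by a unique element of $\PGL(n+1)$, which carries $Z$ onto $Z'$. The converse is clear, and if $Z$ is degenerate one may choose a linear form among the $f_i$ and reduce to a complete intersection in a smaller projective space.

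\textit{Part \ref{p:stab-ic}, finiteness.} The same reconstruction shows that every automorphism of $Z$ fixing the class of $\O_Z(1)$ extends uniquely to an element of $\PGL(n+1)$ stabilising $Z$, so $\Aut(Z,\O(1))$ is identified with the linear algebraic group $\mathrm{Stab}_{\PGL(n+1)}(Z)$. This group is finite exactly when it is $0$-dimensional, \ie when no nonzero global vector field on $\P^n$ is tangent to $Z$; equivalently, when no linear vector field $v$ on $\A^{n+1}$, other than multiples of the Euler field, satisfies $v(f_i)\in(f_1,\dots,f_c)$ for all $i$. For $c=1$ and $d_1\geq3$ --- the only non-degenerate hypersurface case besides a quadric --- the vanishing of this space is precisely the Matsumura--Monsky theorem for smooth hypersurfaces, proved through the Jacobian (Milnor) ring of $f_1$. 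For $c\geq2$ one argues in the same spirit, resolving $(f_1,\dots,f_c)$ by the Koszul complex and using $H^0(\P^n,T_{\P^n}(-a))=0$ for $a\geq2$ (and $H^1(\P^n,T_{\P^n})=0$) to kill all contributions except those of degree-$2$ generators, for which a more delicate analysis --- carried out by Benoist --- is needed; it yields finiteness in every case but the quadric hypersurface, whose stabiliser is the positive-dimensional group $\mathrm{PO}(n+1)$.

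\textit{Part \ref{p:stab-ic}, triviality in general.} For a general $Z$ of the given type I would argue with the incidence variety $I=\{(Z,g):gZ=Z\}\subset\mathcal{P}^{\circ}\times\PGL(n+1)$, where $\mathcal{P}^{\circ}$ is the irreducible parameter space of smooth complete intersections of type $(d_1,\dots,d_c)$. By the finiteness just established, $I\to\mathcal{P}^{\circ}$ has finite fibres, so $\{1\}\times\mathcal{P}^{\circ}$ is a component of $I$ of maximal dimension $\dim\mathcal{P}^{\circ}$, and a general $Z$ has trivial $\Aut(Z,\O(1))$ precisely when no other component of $I$ dominates $\mathcal{P}^{\circ}$. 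Since an automorphism has finite order, hence is semisimple, it suffices to prove that for every noncentral semisimple $g\in\PGL(n+1)$ one has
\[
\dim(\text{conjugacy class of }g)+\dim\{Z\in\mathcal{P}^{\circ}:gZ=Z\}<\dim\mathcal{P}^{\circ},
\]
which bounds, uniformly over the finitely many eigenvalue-multiplicity types of $g$, the dimension of the part of $I$ lying over the locus $g\neq1$. The fixed locus $\{Z:gZ=Z\}$ is governed by the eigenspace decomposition of $g$ on the spaces of degree-$d_i$ forms --- the defining forms lying in single eigenspaces when the $d_i$ are pairwise distinct, with a Grassmannian variant when some degrees coincide --- and the gap between the dimension of each such space and that of its largest eigenspace makes the inequality strict for all types except those in the statement. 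Carrying out this dimension count, and pinning down exactly the borderline types, is the main point of the argument; that the complete intersection of two quadrics is genuinely exceptional is visible from the sign-change automorphisms $(\ZZ/2\ZZ)^n\subset\Aut(Z,\O(1))$ of a simultaneously diagonalised general pencil of quadrics.
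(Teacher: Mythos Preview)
The paper does not prove this theorem at all: it simply records that $\Aut(Z,\O(1))$ is the stabiliser of $Z$ in $\PGL(n+1)$, and then cites \cite[Prop.~2.1]{benoist-these} for part~\ref{p:moduli-ic}, \cite[Thm.~1.6]{benoist-separation} for the finiteness statement in~\ref{p:stab-ic}, and \cite[Thm.~1.3]{cpz-MM} for the triviality statement. So there is no ``paper's own proof'' to compare with; the result is imported wholesale.

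Your sketch is a fair outline of what those references actually do. Part~\ref{p:moduli-ic} via the section ring is exactly Benoist's argument and is complete as written. For finiteness you correctly isolate the Matsumura--Monsky input for hypersurfaces and point to the Koszul analysis for higher codimension, though you yourself defer the ``more delicate analysis'' to Benoist --- so this paragraph is really a citation dressed as a sketch. For triviality, your incidence-variety/dimension-count strategy is indeed the one used in \cite{cpz-MM}, but you explicitly acknowledge that ``carrying out this dimension count\ldots\ is the main point of the argument'' and do not do it; this is an honest statement of where the work lies, not a proof. In short: your proposal is correct in spirit and matches the literature the paper cites, but as a self-contained argument it is incomplete precisely at the step (the eigenspace dimension estimate across all semisimple types) that constitutes the substance of the cited theorem.
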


\bigskip
To be noted that $\Aut(Z,\O(1))$ is the stabilizer of $Z$ in
$\PGL(n+1)$. 
For \ref{p:moduli-ic}, see \cite[Prop.~2.1]{benoist-these}.
Part \ref{p:stab-ic} in the smooth case is
\cite[Thm.~1.6]{benoist-separation}, and in the general case it is
\cite[Thm.1.3]{cpz-MM}.

\subsection{Fibres and universal extensions}

\paragraph{$g_1=3$: dimension of the general fibre}
\label{par:fibre}
Let $k \in \N\setminus \{0\}$, and consider the forgetful map
$c_{g} ^k: \KC_{1+2k^2} ^k \to \M _{1+2k^2}$,
$g=1+2k^2$.
Its image contains $\IC(4,k)$ as a dense subset (the
locus of smooth complete intersection curves of type $(4,k)$ in
$\P^3$).

Consider a general curve $[C] \in \IC(4,k)$. Up to
the action of $\PGL(4)$, there is a unique\footnote
{unique indeed if we consider $C$ as a curve with a $k$-spin
structure, but in fact a finite number if we only take it as a naked
curve} 
incarnation of $C$ as a
complete intersection in $\P^3$, which we call $C$ as well.
The linear system of quartics containing $C$ has dimension
\[
h^0(\I_C(4))-1 = h_3(4-k)
\]
and dominates the fibre of $c_g^k$ over $C$.
In fact, the quotient of this linear system by the stabilizer of $C$
in $\PGL(4)$ is birational to this fibre.
When $k>1$ this stabilizer is trivial, and when $k=1$ it is the affine
group $\Aff(3)$ which has dimension $4$
(and indeed $h_3(3)-4=16$, the dimension of the general fibre of
$c^1_3$ which is dominant).
We thus find the values indicated in the table below for the dimension
of the general fibre of $c_g^k$.

\begin{center}
\begin{tabular}{r|llllll}
$g_1=3$ \\
$k$ & 2 & 3 & 4 & $\geq 5$ \\
\hline
$g$    & $9$ & $19$ & $33$ & $\geq 51$
\\ 
$\dim(\text{fibre})$ 
& 10 & 4 & 1 & 0
\end{tabular}
\end{center}

When $k\geq 3$ we have $g\geq 11$ and $\Cliff>2$ so we may apply the
theory in \cite {cds}, and this tells us the value of $\cork (\Phi_C)$ for the
general $[C] \in \IC(4,k)$, as well as the number of times $C$ may be
extended in its canonical embedding.
When $k \leq 2$ however there is little we can do but observe the
coincidence in dimension with
$\sum _{l\geq 1} h^0(N(-l))$, for $C$ does not enjoy property $N_2$
(even in its canonical embedding).

\paragraph{$g_1=3$: universal extensions}
\label{p:g3.univ}
We now describe the universal extensions of canonical curves in the
image of $c_g^k$ for $2\leq k\leq 4$
(when $k>4$ the universal extension is the unique quartic surface $S
\subset \P^3$ containing $C$, re-immersed in $\P^g$ by the linear
system $|\O_S(k)|$).
They are similar to the extensions found
by Totaro (private communication) in the case $g_1=2$
(see \cite[(4.8)]{cd-double} and \ref{p:totaro}
below).
The case $k=2$ is somewhat exceptional, 
as $h^0(N_{C/\P^9}(-2))=1$ hence there is no universal extension guaranteed
by \cite{cds}
(see also Remark~\ref{r:sharp-Lv.g3}); still it works out as the others.
{\setlength{\parindent}{0mm}
\begin{asparaitem}
\item $k=4$: the universal extension is $\P^3$, in its anticanonical
embedding $v_4(\P^3) \subset \P^{34}$; it may also be presented as a
quartic hypersurface
$X_{4,4} \subset \P(1^4,4)$ in its anticanonical embedding, in $\P^{34}$
by $|\O(4)|$
($|\O(4)|$ embeds $\P(1^4,4)$ in $\P^{35}$ as the cone over the
$4$-Veronese $v_4(\P^3)\subset \P^{34}$).
\item $k=3$: the universal extension has dimension $6$, embedded in
$\P^{23}$ with index $4$; it is again a quartic $X_{4,3}$, this time in
$\P(1^4,3^4)$; the anticanonical of $X_{4,3}$ is $\O(12)$, correspondingly
the embedding with index $4$ is given by (weighted) cubics
(and indeed $h^0(\O_{X_{4,3}}(3))=24$).
Specifically the equation is as follows. Assume $C \subset \P^4$ is
the complete intersection of the quartic $(f=0)$ and the cubic
$(g=0)$. Then $X_{4,3}$ is defined by
\[
  f(\bx)+\bx\cdot \by
  = f(x_0,\ldots,x_3)+x_0y_0+\cdots+x_3y_3
  =0,
\]
where the $x_i$'s and $y_i$'s are the homogeneous coordinates of
weight $1$ and $3$ respectively.
Indeed the general quartic surface in $\P^3$ containing $C$ has an equation
of the form $f+(a_0x_0+\cdots+a_3x_3)g=0$, hence can be obtained by
cutting $X_{4,3}$ by the $4$ cubic equations
\[
  y_0-a_0g(\bx)
  = \cdots
  = y_3-a_3g(\bx) =0.
\]
One gets $C$ by cutting by the fifth cubic $g(\bx)=0$.
\item $k=2$: we can package together all the quartic surfaces containing a
 complete intersection curve of type $(2,4)$ as a (weighted) quartic
hypersurface $X_{4,2}$ in $\P(1^4,2^{10})$. This has anticanonical
$\O(20)$, hence may be embedded with index $10$ by (weighted)
quadrics;
one has $h^0(\O_{X_{4,2}}(2))=20$, so one ends up with
$X_{4,2} \subset \P^{19}$ as required.
Take $C$ the complete intersection of the quartic $(f=0)$ and the
quadric $(g=0)$. Then the equation of $X_{4,2}$ is
\[
  f(\bx)+ \bigl(x_ix_j\bigr)_{0 \leq i\leq j\leq 3}\cdot \by
  = f(\bx)+x_0^2y_0+x_0x_1y_1+\cdots + x_3^2y_{10}
  =0.
\]
The general quartic homogeneous polynomial vanishing along $C$ is of the
form $f+qg$ for some quadratic form
$q(\bx)=a_0x_0^2+\cdots+a_{10}x_3^2$, and one gets the corresponding
surface in $\P^3$ by cutting by the $10$ degree $2$ equations
\[
  y_0-a_0g(\bx)=\cdots=y_{10}-a_{10}g(\bx)=0.
\]
\end{asparaitem}
}

\begin{remark}
\label{r:sharp-Lv.g3}
The case $k=2$ above provides interesting insight into
Lvovski's main theorem in \cite{lvovskiII}. He proves that
if $X \subset \P^n$ is a smooth, non-degenerate variety which is not a
quadric, then $X$ cannot be extended non-trivially more than
$\alpha(X)$ times if $\alpha(X)<n$, with
$\alpha(X)=h^0(N_{X/\P^n}(-1))-n-1$. 

Consider a smooth complete intersection curve $C\subset \P^3$ of type
$(2,4)$, in its canonical embedding in $\P^8$.
The construction above proves that $C$ is extendable $11$ times, to
the $12$-fold $X_{4,2} \subset \P^{19}$, whereas
\[
  \alpha(C) = \cork(\Phi_C)=10.
\]
Thus $C \subset \P^8$ is extendable strictly more 
than $\alpha(C)$ times;
indeed the assumption ``$\alpha(C)<8$'' of Lvovski's theorem
does not hold. Note moreover that $C \subset \P^8$ is not a complete
intersection. This shows (a) that the condition ``$\alpha(X)<n$'' is stronger
than ``$X \subset \P^n$ is not a complete intersection'', and
(b) that one may not replace the assumption
``$\alpha(X)<n$'' by ``$X \subset \P^n$ is not a complete intersection''
in Lvovski's theorem.

On the other hand it follows from Lvovski's theorem that the
two Fano's $X_{4,4} \subset \P^{34}$ and
$X_{4,3} \subset \P^{23}$ constructed in \ref{p:g3.univ}
are not extendable.
\end{remark}

\bigskip
The other complete intersection cases are similar, so we will give
less details.

\paragraph{$g_1=4$}
Let $k \in \N\setminus \{0\}$, and consider the forgetful map
$c_{g} ^k: \KC_{1+3k^2} ^k \to \M _{1+3k^2}$,
$g=1+3k^2$.
Its image contains $\IC(2,3,k)$ as a dense subset (the
locus of smooth complete intersection curves of type $(2,3,k)$ in
$\P^4$).

\begin{center}
\begin{tabular}{r|llllll}
$g_1=4$ \\
$k$ & 2 & 3 & $\geq 4$ \\
\hline
$g$    & $13$ & $28$ & $\geq 49$
\\ 
$\dim(\text{fibre})$ 
 & 6 & 1 & 0
\\
threefold & cubic & quadric & no
\end{tabular}
\end{center}

For $k=2$ we consider complete intersection curves $C$ of type $(2,2,3)$
in $\P^4$. Then $h^0(\I_C(2))=2$ and  $h^0(\I_C(3))=11$; there is a
$\P^1$ of quadrics containing $C$, and for each of these the $\P^{10}$
of cubics containing $C$ cuts out a $\P^5$ of $K3$'s complete
intersection of type $(2,3)$. Eventually the fibre $(c_{13}^2)^{-1}(C)$ has
dimension $6$.
The universal extension is a weighted complete intersection of type
$(2,3)$,
$X_{(2,3),2} \subset \P(1^5,2^6)$; it has anticanonical $\O(6\cdot 2)$, hence
it is embedded with index $6$ in $\P^{19}$ by the linear system of
weighted quadrics. 
The equations of  $X_{(2,3),2}$ are as follows:
let $f=g=g'=0$ be the equations of $C$,  with
$\deg(f)=3$ and $\deg(g)=\deg(g')=2$;
then $X_{(2,3),2}$ may be defined by the equations
\[
  \left\{
  \begin{aligned}
    g(\bx)+y_5 &=0 \\
    f(\bx)+ x_0\cdot y_0 + \cdots + x_4 \cdot y_4 &= 0.
  \end{aligned}
\right.
\]
with the $x_i$'s and $y_i$'s of weights $1$ and $2$ respectively.
A complete intersection $K3$ in $\P^4$ containing $C$ is the
intersection of a quadric of the form $g+a_6g'=0$ and a cubic
of the form $f+(a_0x_0+\cdots+a_4x_4)g'$, which is obtained by cutting
$X_{(2,3),2}$ by $6$ quadric equations $y_5-a_6g'=0$, etc.

For $k=3$,
a smooth complete intersection $C$ of type $(2,3,3)$ is contained in a
unique quadric, and there is a pencil of cubics containing $C$ and
independent of the quadric, hence a pencil of $K3$ surfaces containing
$C$. 
The universal extension is a $3$-fold, namely the
quadric in $\P^4$, in its anticanonical embedding $v_3(X_2) \subset
\P^{29}$.
This may also be presented in a uniform fashion with the other
extensions: a quadric in $\P^4$ is
also a complete intersection $X_{(2,3),3}$ of type $(2,3)$ in
$\P(1^5,3)$.

It follows from Lvovski's theorem that
$X_{(2,3),2} \subset \P^{19}$
and 
$X_{(2,3),3} \subset \P^{29}$
are not extendable.

\paragraph{$g_1=5$} In this case we consider complete intersection
curves of type $(2^3,k)$ in $\P^5$.

\begin{center}
\begin{tabular}{r|llllll}
$g_1=5$ \\
$k$ & 2 & $\geq 3$ \\
\hline
$g$    & $17$ & $\geq 37$
\\ 
$\dim(\text{fibre})$ 
 & 3 & 0
\end{tabular}
\end{center}

For $k=2$ the curve $C$ is a complete intersection of type $(2^4)$ in
$\P^5$. The $K3$ surfaces extending it correspond to the planes in the
$3$-space $\P (H^0(\I_C(2)))$, the family of which is a $\P^3$.
The universal extension is a Fano $5$-fold $X_{(2^3),2}$ complete
intersection of type $(2^3)$ in $\P(1^6,2^3)$,
with anticanonical $\O(3\cdot 2)$ and embedded by quadrics in
$\P^{20}$.
For $C$ with equations $g_0=g_1=g_2=g_3=0$, where $g_i$ are forms of
degree 2 for $0\leq i\leq 3$, we may take for
$X_{(2^3),2}$ the equations 
\[
  \left\{
    \begin{aligned}
      g_0(\bx)+y_0 &= 0 \\
      g_1(\bx)+y_1 &= 0 \\
      g_2(\bx)+y_2 &= 0,
    \end{aligned}
  \right.
\]
with the $x_i$'s and $y_i$'s of weights $1$ and $2$ respectively.
Lvovski's theorem implies that $X_{(2^3),2} \subset \P^{20}$
is not extendable.

For $k \geq 3$ there is a unique net of quadrics containing $C$, hence
$C$ is extendable only once.

\begin{remark}
\label{p:totaro}
The case of sextic double planes ($g_1=2$, which we studied in details
in \cite{cd-double}) is closely analogous to
that of quartics in $\P^3$, as indeed a
sextic double plane is a sextic hypersurface in $\P(1^3,3)$ so in both
cases we deal with hypersurfaces.
For $g_1=2$, the dimension of the general fibre of $c_g^k$ is given in
the table below (see \cite{cd-double}).
\begin{center}
\begin{tabular}{r|lllllllll}
$g_1=2$ \\
$k$ & 2 & 3 & 4 & 5 & 6 & $\geq 6$ \\
\hline
$g$ & 5 & 10 & 17 & 26 & 37 
\\ 
$\dim(\text{fibre})$
    & 15& 10 & 6 & 3 & 1 & 0
\end{tabular}
\end{center}
For $k \leq 6$ we can package together the $K3$ surfaces extending a
curve $C$ in the image of $c_g^k$ as a Fano $(2+\nu)$-fold of index
$\nu$ in $\P^{g+\nu}$, where $\nu = \dim ((c_g^k)^{-1}(C))$, which is
the image of a sextic hypersurface in
$X_{6,k} \subset \P(1^3,3,k^\nu)$ (with equation similar to that given in
\ref{p:g3.univ}) by the linear system of weighted $k$-ics.

For $k \leq 2$ this linear system is hyperelliptic, \ie it gives a
$2:1$ map.
For $k=3$, we have $\cork(\Phi_C)=10$ and $h^0(N_{C/\P^9}(-2))=1$, and
$X_{6,3}$ is an example similar to that in Remark~\ref{r:sharp-Lv.g3},
which shows the sharpness of Lvovski's theorem.
For $k \leq 4$ the Fano $(2+\nu)$-fold $X_{6,k} \subset \P^{g+\nu}$ is
not extendable, by Lvovski's theorem. For $k=6$ in fact $X_{6,6}$ is
merely $\P(1^3,6)$ itself in its anticanonical embedding.
\end{remark}

\section{Curves on Mukai varieties}
\label{S:Mukai}

In this section we consider the cases 
$g_1=6,7,8,9,10$.

\subsection{Introduction and a conjecture}
\label{s:mukai.intro}

\paragraph{}
\label{p:mukai.descr}
For $g_1=7,8,9,10$, there exists a variety
$M_{g_1} \subset \P(U_{g_1})$,
homogeneous for some simple algebraic group $G$ of which
$U_{g_1}$ is an irreducible representation, such that the
general prime $K3$ surface of genus $g_1$ is
a linear section of $M_{g_1}$
\cite[Cor.~0.3]{mukai1}.
These groups, representations and homogeneous varieties are recalled
in the table below.

\begin{center}
\begin{tabular}{l|llllll|l}
\(g\) & \(G_g\) & \(\dim(G)\) & \(U_g\) & \(\dim(U_g)\) & \(M_g\) & \(\dim(M_g)\) & \(k_g\)\\
\hline
\(7\) & \(\Spin_{10}\) & 45 & \(\Delta_+\) & 16 & \(S_{10}\) & \(10\) & 4\\
\(8\) & \(\SL_6\) & 35 & \(\wedge^2\CC^6\) & 15 & \(\G(2,6)\) & \(8\) & 3\\
\(9\) & \(\Sp_6\) & 21 & \(\wedge^{\langle 3\rangle}\CC^6\) & 14 & \(\LG(3,6)\) & \(6\) & 2\\
\(10\) & \(G_2\) & 14 & \(\mathfrak{g}_2\) & 14 & \(G_2/P_2\) & \(5\) & 2\\
\end{tabular}
\end{center}
\emph{Notation:} we set $n(g)=\dim(U_{g}) -1$.
We will sometimes drop the subscript $g$ in order to lighten the
notation. \medskip

We will not use directly all the information in this table, so the reader
unfamiliar with representation theory may safely read only those columns
giving dimensions.

Two smooth surface linear sections of
$M_{g_1}$ are projectively equivalent if and only if  one is image
from the other by an automorphism coming from $G$
\cite[Thm.~0.2]{mukai1}.
The proof of the latter statement may be adapted to prove the
same result for smooth linear sections of higher dimension, but it
breaks down in the case of curve linear sections of
$M_{g_1}$.

In the present article we use ribbons to transport the results proved
by Mukai for surface sections of $M_{g_1}$ to curve sections,
see \ref{conj:Mukai} and \ref{p:mukai.results} below.

A curve in the image of $c_{g} ^k$ is a complete intersection $C$ of
type $(1^{m-2},k)$ in $M_{g_1}$
($m=\dim(M_{g_1})$), hence for $k>1$ there is a unique surface linear
section of $M_{g_1}$ containing $C$. The description of the
fibre $(c_{g} ^k)^{-1}(C)$ depends on the truth of the
Conjecture~\ref{conj:Mukai} below. 
Before we state it, let us describe the analogous results for
$g_1=6$.

\paragraph{}
For $g_1=6$
there is the Mukai--Fano threefold $V \subset \P^6$
which is a linear section of $\Gr(2,5) \subset \P^9$ in its Plücker
embedding; it has index $2$ and degree $5$.
The generic prime $K3$ surface of genus $6$ is a section of $V$ by a
quadric in $\P^6$;
one has 
$\Aut(V) \cong \PGL(2)$, and two smooth quadric sections of $V$ are
projectively isomorphic if and only if one is image from the other by
an automorphism of $V$
\cite[Thm.~4.1]{mukai1}.

Smooth surface linear sections of $V$ all are Del Pezzo surfaces of
degree $5$, in particular they are all isomorphic.
Moreover, a general genus $6$ curve $C$ in its canonical embedding is
a quadric section of the Del Pezzo surface of degree $5$ in $\P^5$,
and therefore has a $6$-dimensional family of models as a complete
intersection of type $(1,2)$ on $V$; since $\Aut(V)$ is only
$3$-dimensional, this shows that in general two isomorphic smooth complete
intersections of type $(1,2)$ are not conjugate
under the action of $\Aut(V)$.

\begin{conjecture}
\label{conj:Mukai}
Let $g_1=7,8,9,10$, and $m=\dim(M_{g_1})$. For all $k\geq 1$,
two smooth  complete intersection curves of type $(1^{m-2},k)$ in
$M_{g_1}$
are projectively isomorphic if and only if they are conjugate under
the action of $G_{g_1}$.
For all $k\geq 2$,
two smooth complete intersection curves of type $(2,k)$ in $V \subset
\P^6$ 
are projectively isomorphic if and only if they are conjugate under
the action of $\Aut(V)$.
\end{conjecture}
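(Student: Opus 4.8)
The plan is to transport Mukai's theorem \cite[Thm.~0.2]{mukai1} (for surface sections) to curve sections by means of the theory of ribbons and the Gauss--Wahl map, exactly in the spirit of the universal-extension machinery of \cite{cds} that underlies Proposition~\ref{pr:unicity-integral}. Fix $g_1 \in \{7,8,9,10\}$ and set $m = \dim(M_{g_1})$; the argument for $V\subset\P^6$ ($g_1=6$, $m=3$) is formally identical, replacing $G_{g_1}$ by $\Aut(V)$ and using \cite[Thm.~4.1]{mukai1}. Let $C, C' \subset \P^{g-1}$ be two smooth complete intersection curves of type $(1^{m-2},k)$ in $M_{g_1}$ in their canonical embeddings, and suppose $\phi\colon C \xrightarrow{\sim} C'$ is an abstract isomorphism; we must produce $\gamma \in G_{g_1}$ carrying one model to the other.

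First I would verify that $C$ satisfies the hypotheses needed to apply the extension theory: by Lemma~\ref{l:mukai-cliff} (invoked in the Thanks, and used for Clifford-index computations of Mukai-variety sections) the very general such curve has $\Cliff(C)>2$ and $g(C)\geq 11$ — this is precisely what Proposition~\ref{pr:condCDS}\ref{pr:table-i} records for these $g_1$ when $k$ is small, and for larger $k$ it is immediate. Hence $C \subset \P^{g-1}$ is projectively normal, cut out by quadrics with linear syzygies, and $H^0(N_{C/\P^{g-1}}(-2))=0$, so Proposition~\ref{pr:unicity-integral} applies: each ribbon $v \in \ker(\trsp\Phi_C)$ has at most one surface extension up to projectivity, and more generally the extension data of $C$ in all codimensions is governed by $\ker(\trsp\Phi_C)$. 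The key point is that since $k>1$, there is (as noted in \ref{p:mukai.descr}) a \emph{unique} surface linear section $S \subset M_{g_1}$ containing $C$; likewise a unique $S' \subset M_{g_1}$ containing $C'$. Now $S$ and $S'$ are polarized $K3$ surfaces of genus $g_1$, and $C \subset S$, $C'\subset S'$ are members of the linear systems $|kL_1|$, $|kL_1'|$ respectively. The isomorphism $\phi$ together with the canonical identification $\O_S(\tfrac1k C) = L_1$ (and similarly for $S'$) provided by the spin structure — i.e. the factorization through $\S_g^{1/k,g_1}$ from the paragraph ``$K3$ curves of index $k$ and $k$-spin curves'' — shows that $\phi$ extends to an isomorphism $\tilde\phi\colon (S,L_1) \xrightarrow{\sim} (S',L_1')$ of polarized $K3$ surfaces: indeed, the ribbon on $C \subset \P^{g-1}$ determined by $S$ is carried by $\phi$ to the ribbon determined by $S'$ (both being the unique surface extension inside the Mukai variety), and by the unicity in Proposition~\ref{pr:unicity-integral} this forces $S \cong S'$ compatibly with the embeddings.

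Having obtained $\tilde\phi\colon (S,L_1)\xrightarrow{\sim}(S',L_1')$, I would feed this into Mukai's theorem for \emph{surface} sections: both $S$ and $S'$ are embedded in $M_{g_1} \subset \P(U_{g_1})$ as linear sections (via $|L_1|$), they are projectively equivalent by $\tilde\phi$, hence by \cite[Thm.~0.2]{mukai1} there exists $\gamma \in G_{g_1}$ with $\gamma(S) = S'$ inducing $\tilde\phi$. It remains to check that $\gamma$ also matches the chosen hyperplane data cutting out $C$ from $S$ versus $C'$ from $S'$: since $C = S \cap H_1 \cap\cdots$ and $C' = S' \cap H_1'\cap\cdots$ with the last divisor of degree $k$, and since $\gamma$ already carries $C$ (as a subscheme of $S$) to a curve abstractly isomorphic to $C'$ via $\tilde\phi$, the two curves $\gamma(C)$ and $C'$ are members of $|kL_1'|$ on $S'$ that are equal as abstract subschemes under the identity; because the linear section $C' \subset S'$ is determined by $\tilde\phi|_C = \phi$ up to the finite stabilizer data, one concludes $\gamma(C) = C'$ as subvarieties of $\P(U_{g_1})$, possibly after composing with an element of $\Aut(S',L_1')$, which is finite by Proposition~\ref{pr:finite-auto-pol} and which lifts to $G_{g_1}$ again by \cite[Thm.~0.2]{mukai1}.

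The main obstacle is the step where Mukai's proof ``breaks down'' for curve sections, as flagged in \ref{p:mukai.descr}: the naive approach of cutting down from surfaces fails because a given abstract curve has a \emph{positive-dimensional} family of incarnations as a section of $M_{g_1}$ when the fibre of $c_g^k$ is positive-dimensional (e.g. $g_1=6,k=2$). What rescues the argument is precisely that for $k>1$ the intermediate surface $S$ is \emph{unique}, so the curve problem is rigidly reduced to the surface problem, where Mukai's theorem does apply; the delicate part is therefore not the reduction but checking that the spin structure $\O_S(\tfrac1k C)|_C$ genuinely rigidifies the extension $C\rightsquigarrow S$ — i.e. that the ribbon attached to the embedded curve $C\subset\P^{g-1}$ remembers $S$ — which is where Proposition~\ref{pr:unicity-integral} and the property-$N_2$ hypothesis from Lemma~\ref{l:mukai-cliff} are indispensable. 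In the exceptional low-$k$ cases where $\Cliff(C)\leq 2$ (cf.\ Proposition~\ref{pr:condCDS}\ref{pr:table-i}), this ribbon argument must be supplemented by an ad hoc analysis, as will be done case by case.
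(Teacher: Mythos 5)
This statement is labelled a \emph{Conjecture} in the paper, and the authors prove only weak forms of it (Corollary~\ref{c:isom-conj.k1} for $k=1$, and the finiteness statement of Proposition~\ref{pr:conj.k2} for $k\geq 2$, the latter by a degeneration-to-ribbons argument quite different from yours); they even explain at the end of the section why the full statement resists their methods. So a complete proof along the lines you propose should be treated with suspicion, and indeed there is a genuine gap at the pivotal step. You assert that $\phi$ carries the ribbon $2C_S$ to the ribbon $2C'_{S'}$ ``both being the unique surface extension inside the Mukai variety''. But uniqueness of the surface \emph{linear section of $M_{g_1}$} containing the embedded curve $C\subset\P(U_{g_1})$ (which holds simply because $C$ spans a $\P^{g_1}$) is not the same thing as uniqueness of the surface \emph{extension} of the canonical model $C\subset\P^{g-1}$: the latter is governed by $\P(\ker(\trsp\Phi_C))$, and a priori the projectivity of canonical models induced by $\phi$ carries $S$ (reembedded) to \emph{some} surface extension of $C'$, which need not be $S'$ reembedded. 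Knowing that there is only one such extension is exactly $\cork(\Phi_C)=1$, i.e.\ Corollary~\ref{c:cork} --- which in the paper is \emph{deduced from} Proposition~\ref{pr:conj.k2}, the weak form of the very statement you are proving; invoking it here is circular, and without it the step fails. This is precisely the point where, as the authors say, Mukai's argument ``breaks down'' for curve sections.

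Three further problems. First, you start from an \emph{abstract} isomorphism $\phi:C\to C'$, which for $k\geq 2$ is strictly weaker than the projective isomorphism in the hypothesis of the conjecture; you are therefore attempting the stronger statement that the authors explicitly single out as inaccessible (``the canonical class may have distinct square roots'': two non-conjugate models of the same abstract curve could correspond to distinct $k$-th roots $\theta\neq\theta'$ of $K_C$, and nothing in the ribbon formalism matches them up). Second, the claim that the case of $V\subset\P^6$ is ``formally identical'' is false: there the surface is a \emph{quadric} section of $V$ and a complete intersection curve of type $(2,2)$ in $V$ lies on a whole pencil of such surfaces, so the uniqueness input disappears --- consistently with the fact, recorded in~\ref{p:g6.dimfibre}, that the fibre of $c_{21}^2$ is $1$-dimensional, so the conjectured statement certainly cannot be reduced to the surface case the way you propose. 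Third, the conjecture is stated for \emph{all} smooth complete intersection curves, whereas the inputs you use (Lemma~\ref{l:cliff} for $\Cliff(C)>2$, hence Proposition~\ref{pr:unicity-integral}) require $\Pic(S)=\ZZ\langle L_1\rangle$ or at least genericity; your closing sentence about ``ad hoc analysis in the exceptional low-$k$ cases'' does not cover this, since the special locus occurs for every $k$.
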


\paragraph{Statement of results}
\label{p:mukai.results}
We prove a weak form of Conjecture~\ref{conj:Mukai}
for $g_1=7,8,9,10$,
see Corollary~\ref{c:isom-conj.k1} and Proposition~\ref{pr:conj.k2}.
If $k=1$, a direct analysis of the representations $U_{g_1}$ carried out in
\cite{dm} shows that a general canonical curve $C$ has only finitely
many models as a linear section of $M_{g_1}$ up to the action of $G$,
see \ref{p:apll-dm};
we show (Corollary~\ref{c:isom-conj.k1}) that there is in fact only
one such model if $C$ is general.\footnote
{After finishing this article, we have found out that for $g_1=7,8,9$
this had already been proven by Mukai himself
\cite{mukai-symm0}.}

If $k \geq 2$, we show
(Proposition~\ref{pr:conj.k2} and Remark~\ref{rk:mukai.larger.k})
that a general complete
intersection curve of type $(1^{m-2},k)$ has only finitely many models
on $M_{g_1}$ up to the action of $G$.
From this we can deduce
(Corollary~\ref{c:cork} and Remark~\ref{rk:mukai.larger.k})
that the moduli map
$c_g^k$ is birational, and a general complete intersection curve $C$
of type $(1^{m-2},k)$ on $M_g$ has $\cork(\Phi_C)=1$.
Thus the canonical model
of $C$ can be extended only once, to a $K3$ surface wich is the
reembedding by $k$-ics of a linear section of $M_{g_1}$.

On our way, we show that the Mukai variety in $\P(U_{g_1})$ is the
universal extension of its smooth curve linear sections,
see Proposition~\ref{p:mukai-univ}.
In particular it is not extendable, as was already observed in
\cite{CLM98}.

\paragraph{}
\label{p:g6.dimfibre}
When $g_1=6$, we cannot apply the same arguments because canonical
curves of genus $6$ do not satisfy property $N_2$.
However for $k=2$ the situation is pretty well understood.

A curve $C$ corresponding to a general point in the image of
$c_{21}^2$ is a complete intersection of 
type $(2,2)$ in $V$.
The canonical model of $C$ in $\P^{20}$
is a linear section of $V$ anticanonically embedded in $\P^{22}$.
This implies by \cite[Prop.~6]{mukai2} and
\cite[Prop.~5.4]{beauville-fano} that the fibre $(c_{21}^2)^{-1}(C)$
has dimension at least $1$.
In fact, it is proven in \cite{CLM98} that
$\cork(\Phi_C)= 2$, hence the dimension of the fibre is exactly
$1$.
Thus $V \subset \P^{22}$ is the universal extension of $C$; in
particular it is not extendable.

\subsection{Useful results}

In the statements below, we use the notation of \ref{p:mukai.descr}.

\begin{theorem}[\cite{dm}]
\label{stab-sous-espace}
Let $g=7,8,9,10$, 
and let $k_g=4,3,2,2$ for $g=7,8,9,10$, respectively.
Let $P \subset U_g$ be a generic vector subspace such that
\begin{equation}
\label{eq:c-dim.stab}
\tag{$\star$}
  \min\bigl(\codim_{U_g}(P),\dim(P)\bigr) > k_g.
\end{equation}
Then the stabilizer of $P$ under the action of $G$ is trivial.
\end{theorem}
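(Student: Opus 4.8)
The plan is to reduce the statement about the stabilizer of a generic subspace $P$ to the statement about the stabilizer of a generic linear section of the Mukai variety $M_g$, which is exactly what is controlled by Mukai's projective rigidity results already invoked in \ref{p:mukai.descr} together with the finiteness/triviality statements for automorphisms of $K3$ surfaces and their higher-dimensional analogues. Concretely, a subspace $P \subset U_g$ corresponds, via its projectivization $\P(P) \subset \P(U_g)$, to a linear section $M_g \cap \P(P)$; the stabilizer of $P$ in $G$ is the subgroup of $G$ preserving this linear section. So the theorem says: a generic linear section of $M_g$ of dimension and codimension both exceeding $k_g$ has trivial stabilizer in $G$.

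First I would fix the numerology: for $P$ of codimension $c$ and dimension $p = n(g)+1-c$ in $U_g$, the linear section $M_g \cap \P(P)$ has expected dimension $\dim(M_g) - c$. The hypothesis $\min(c,p) > k_g$ is exactly designed so that this section has dimension $> 0$ (in fact the relevant range is $3 \le \dim(M_g\cap\P(P))$, one checks case by case using $\dim(M_g) = 10,8,6,5$ for $g=7,8,9,10$), and so that its \emph{dual} section — the one cut by a complementary subspace — also has positive dimension; the symmetry $\min(c,p)>k_g$ is what lets the same argument run for sections and their "co-sections." For generic $P$ the section is smooth, irreducible, nondegenerate in $\P(P)$, and by Mukai's theory it is a prime Fano variety (or $K3$, or canonical curve) with the expected Picard structure. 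The key input is then: for such a section $Y = M_g \cap \P(P)$, the restriction map $\Aut(M_g) \supset \Stab_G(P) \to \Aut(Y, \O_Y(1))$ is injective (an automorphism of $M_g$ fixing $Y$ pointwise would fix enough points of $M_g$ to be the identity, since $M_g$ is covered by lines through $Y$ or more simply because $Y$ spans $\P(P)$ and $M_g$ is cut out by quadrics, so a $G$-automorphism trivial on $Y$ is trivial on $\langle Y\rangle$ hence — by genericity and nondegeneracy — on all of $M_g$); and then $\Aut(Y,\O_Y(1))$ is itself trivial for generic $P$ (for surface sections this is the triviality of $\Aut(S)$ when $\Pic(S)=\Z\langle L_1\rangle$ with $L_1^2>2$, Proposition~\ref{pr:no-auto-vgK3}; for threefold and higher sections it follows from the corresponding rigidity, and for curve sections of large genus from the general theory).

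The heart of the argument, and the main obstacle, is the passage from Mukai's projective-equivalence statement for \emph{surface} sections to triviality of the stabilizer for sections of the dimension range allowed here. Mukai proves (\cite[Thm.~0.2]{mukai1}, recalled above) that two smooth surface sections are projectively equivalent iff conjugate under $G$; the paper itself notes this "may be adapted to prove the same result for smooth linear sections of higher dimension." So the plan is: (a) by this adapted statement, the map sending a subspace $P'$ near $P$ to the isomorphism class of its section is, modulo $G$, a local isomorphism onto a chart of the relevant moduli space near $[Y]$; (b) the stabilizer $\Stab_G(P)$ acts on the Grassmannian of subspaces near $P$ fixing $P$, and its action on the moduli chart is trivial, so it must act trivially on a neighborhood of $P$ in the Grassmannian, hence (being algebraic and the Grassmannian being irreducible) trivially on the whole Grassmannian, hence on $U_g$ — forcing $\Stab_G(P) \subset \ker(G \to \PGL(U_g)) = \{1\}$, using that the $G$-action on the irreducible representation $U_g$ is faithful up to center and that $M_g$ is not contained in a hyperplane so the center acts nontrivially unless trivial. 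The delicate points to check carefully are the smoothness and irreducibility of the generic section in the stated range (a Bertini-type argument on $M_g$ which is homogeneous hence has very ample $\O(1)$ with no base locus issues) and the injectivity $\Stab_G(P)\hookrightarrow \Aut(Y,\O_Y(1))$, i.e. that no nontrivial element of $G$ can fix a generic linear section of dimension $> k_g$ while acting nontrivially on $M_g$ — this is precisely where the bound $k_g$ enters and where one must invoke the detailed representation-theoretic analysis of \cite{dm}, which is why the theorem is attributed to that paper.
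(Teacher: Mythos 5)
Your strategy cannot work as stated, for a reason independent of how carefully the individual steps are carried out: the hypothesis $\min\bigl(\codim_{U_g}(P),\dim(P)\bigr)>k_g$ does \emph{not} guarantee that the linear section $M_g\cap\P(P)$ is positive-dimensional, nor even nonempty. Take $g=9$: here $\dim(U_9)=14$, $M_9=\LG(3,6)$ has dimension $6$ in $\P^{13}$, and $k_9=2$, so the theorem covers a generic $P$ with $\dim(P)=\codim(P)=7$; a generic $\P^6$ in $\P^{13}$ misses the $6$-dimensional $\LG(3,6)$ altogether, and the same happens for $P^\perp$ (similarly for $g=10$ with $\dim(P)=7$, and for $g=7$ with $\dim(P)=5$ or $6$, where the section is empty or finite). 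For such $P$ there is no section $Y$ whose automorphisms one could study, so no reduction to Mukai's results is available. This is why the statement is genuinely one of representation theory --- the proof in \cite{dm}, which the paper cites instead of reproducing, is a case-by-case computation of the generic stabilizer of a $d$-plane in each of the four representations $U_g$, carried out at the Lie-algebra level --- and also why your argument is circular where it is not empty: in the paper the triviality of $\Aut(Y,\O_Y(1))$ for a general positive-dimensional linear section $Y$ is \emph{deduced from} Theorem~\ref{stab-sous-espace} (``Following Mukai's arguments \dots\ this implies that a general linear section \dots\ has only trivial automorphisms''), so it cannot be taken as an input. Proposition~\ref{pr:no-auto-vgK3} covers only the $K3$ sections; for Fano sections of dimension $\geq 3$ and for curve sections no independent triviality statement is supplied.

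Even in the range where $Y=M_g\cap\P(P)$ is positive-dimensional, two of your steps fail. Step (b) is a non sequitur: \emph{every} subgroup of $G$ acts trivially on the quotient $\G(g-1,\P(U_g))/G$, so the triviality of the induced action on the ``moduli chart'' carries no information, and it certainly does not force the stabilizer of $P$ to act trivially on a neighbourhood of $P$ in the Grassmannian --- the $G$-orbits are positive-dimensional and leave ample room for a nontrivial action. Second, the injectivity of the restriction map from the stabilizer of $P$ to $\Aut(Y,\O_Y(1))$ is subtler than your parenthetical suggests: an element of $G$ fixing $Y$ pointwise acts as a scalar on $P$ (by nondegeneracy of $Y$ in $\P(P)$), but it may act nontrivially on a complement of $P$ in $U_g$ and therefore move points of $M_g$ outside $\P(P)$; excluding this requires knowing that no nontrivial element of $G$ has an eigenspace containing a generic subspace of the given dimension, which is again a computation in the representation $U_g$ of exactly the kind the theorem encapsulates, not something that follows from genericity alone.
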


\bigskip\noindent
(By the stabilizer of $P$ under the action of $G$, we mean the subgroup
of those $\gamma\in G$ such that $\gamma.P=P$; we do not ask that
$\gamma$ acts as the identity on $P$).
The situation when \eqref{eq:c-dim.stab} is an equality is analyzed in
detail in \cite{dm}.

Following Mukai's arguments in \cite{mukai1}, this implies that a
general linear section of the Mukai variety $M_g$ of positive
dimension and codimension $k>k_g$ has only trivial automorphisms, see
\cite[Thm.~1]{dm}.

\paragraph{}
\label{p:apll-dm}
Theorem~\ref{stab-sous-espace} implies that for a general canonical
curve $C$ of genus $g=7,8,9$
(resp.\ a general canonical of genus $10$ lying on a $K3$ surface), 
up to the action of $G$ there are finitely many
linear sections of $M_g$ isomorphic to $C$.
Consider the rational map
\[
  c_{M_g}:
  \G(g-1,\P(U_g))/G \dashrightarrow \M_g
\]
mapping the orbit of a general $(g-1)$-dimensional subspace
$\Lambda \subset \P(U_g)$
to the modulus of the curve $\Lambda \cap M_g$.
It is dominant if $g\neq 10$ by
\cite[Thm.~6.1]{mukai1},
and dominant onto a divisor in $\M_{10}$ if $g=10$
by \cite{cukierman-ulmer}.
By Theorem~\ref{stab-sous-espace}, the source has dimension
\[
  \dim \bigl(\G(g-1,\P(U_g)) \bigr)
  -\dim(G),
\]
and a direct computation shows that this equals
$\dim(\M_g)$ if $g\neq 10$, resp.\ $\dim(\M_{10})-1$ if $g=10$.
This implies that $c_{M_g}$ is generically finite, which proves
that for general $C$ there are finitely many linear sections of $M_g$
isomorphic to $C$, up to the action of $G$.

\paragraph{Lines through a point in a Mukai variety}
\label{p:VMRT}
The results in this paragraph follow from
\cite[Thm.~4.8]{landsberg-manivel}.
Let $g=7,8,9,10$, and $M_g$ be a Mukai variety.
For all $x\in M_g$, the variety of lines contained in $M_g$ and
passing through $x$ is itself a homogeneous variety which we shall
denote by $M'_g$. We give below a description of $M'_g$, but the only
fact we shall use is that $\dim(M'_g)=\dim(M_g)-4$.

\begin{center}
\begin{tabular}{l|llll}
\(g\) & \(M_g\) & \(\dim(M_g)\) & \(M'_g\) & \(\dim(M'_g)\)\\
\hline
\(7\) & \(S_{10}\) & \(10\) & \(\G(2,5)\) & 6\\
\(8\) & \(\G(2,6)\) & \(8\) & \(\P^1\times\P^3\) & 4\\
\(9\) & \(\LG(3,6)\) & \(6\) & \(\P^2\) & 2\\
\(10\) & \(G_2/P_2\) & \(5\) & \(\P^1\) & 1\\
\end{tabular}
\end{center}

In \cite[Thm.~4.8]{landsberg-manivel} there is a recipe in terms of Dynkin
diagrams to find $M'_g$. The homogeneous varities we consider are of
the form $G/P$, with $G$ a complex Lie group with semisimple Lie
algebra, and $P$ a maximal parabolic subgroup; the diagrams below 
encode the group $G$, and the coloured vertex encodes the subgroup
$P$. For these matters we refer to \cite[\SS 23.3]{FH}. 
The recipe for finding $M'_g$ is that one should cancel the coloured vertex, and colour
those vertices that were adjacent to the cancelled vertex.
\\[5mm]
$g=7$, $S_{10}$:
\includegraphics[align=c,scale=0.5]{g7.eps}
\hspace{8mm} gives\hspace{4mm}
$\G(2,5)$:
\includegraphics[align=t,scale=.5]{g7-1.eps}
\hspace{4mm}
in its Plücker embedding.
\\[5mm]
$g=8$, $\G(2,6)$:
\includegraphics[align=c,scale=.5]{g8.eps}
\hspace{8mm} gives\hspace{4mm}
$\P^1\times \P^3$:
\includegraphics[align=c,scale=.5]{g8-1.eps}
\hspace{4mm}
as a Segre variety.
\\[5mm]
$g=9$, $\mathrm{LG}(3,6)$:
\includegraphics[align=c,scale=.5]{g9.eps}
\hspace{8mm} gives\hspace{4mm}
$\P^2$:
\includegraphics[align=c,scale=.5]{g9-1.eps}
\hspace{4mm}
embedded as a Veronese surface in $\P^5$, as the double edge
indicates. 
\\[5mm]
$g=10$, $G_2/P_2$:
\includegraphics[align=c,scale=.5]{g10.eps}
\hspace{8mm} gives\hspace{4mm}
$\P^1$:
\includegraphics[align=c,scale=.5]{g10-1.eps}
\hspace{4mm}
embedded as a rational normal cubic, as the triple edge indicates,
contained in a hyperplane in $\P^4$ given by the contact structure on
the adjoint variety $G_2/P_2$.

There exist projective constructions of the Mukai varieties explaining
this correspondence in another way, which are instances of the general
construction given in \cite{landsberg-manivel2}.

\subsection{Results and proofs}

Let $g_1 \in \{7,8,9,10\}$.

\begin{proposition}
\label{p:mukai-univ}
Let $[C] \in \M_{g_1}$ be a general point if $g_1 \neq 10$, 
(resp.\ a general point in $\im (c_{10}^1)$ if $g_1=10$)
and consider $C$ in its
canonical embedding.
\subparagraph{}
\label{sp:all-integrable}
For all $e \in \ker(\trsp \Phi_C)$, the ribbon $C_e\subset \P^{g_1}$
is integrable to a unique surface (which is a $K3$ for general $e$).
\subparagraph{}
\label{sp:mukai-univ}
The Mukai variety $M_{g_1}$ is its universal extension.
\end{proposition}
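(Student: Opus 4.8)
The plan is to exploit the known results of Mukai for surface linear sections of $M_{g_1}$ and transport them to ribbons, using the theory of the Gauss--Wahl map recalled in the preliminaries. First I would record that, since $C$ is general (in $\M_{g_1}$ for $g_1\neq 10$, resp.\ in $\im(c^1_{10})$), it lies on a $K3$ surface $S$ which is itself a general hyperplane section of $M_{g_1}$; in particular $\Pic(S)=\Z\langle \O_S(1)\rangle$, $C$ enjoys property $N_2$ in its canonical embedding by \cite{mukai1} (or can be reduced to the fact that $M_{g_1}$ is projectively normal and cut out by quadrics, hence so are its linear sections through a general point, which have the same homogeneous ideal in low degrees), and $\Cliff(C)>2$. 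This puts us in the regime of Proposition~\ref{pr:unicity-integral}: for any $e\in\ker(\trsp\Phi_C)$, a surface extension of $C\subset\P^{g_1}$, if it exists, is unique up to projectivity.

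For \ref{sp:all-integrable}, I would first produce enough actual $K3$ extensions: the $K3$ surfaces $S'\subset\P^{g_1}$ that are hyperplane sections of $M_{g_1}$ and contain $C$ form a family of dimension $n(g_1)-g_1 = \dim\Gr(g_1-1,\P(U_{g_1}))-\dim G_{g_1}$-many, more precisely the hyperplanes $H\subset\P(U_{g_1})$ with $C\subset H\cap M_{g_1}$ form a linear space whose dimension I would compute from the exact sequence coming from restricting $\O_{M_{g_1}}(1)$ to the span of $C$, using projective normality of $M_{g_1}$; each such $H$ gives a surface extension of $C\subset\P^{g_1}$ inside $M_{g_1}$. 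Differentiating this family of extensions produces a subspace of $\ker(\trsp\Phi_C)$ of the corresponding dimension (each $K3$ extension corresponds to a ribbon, i.e.\ an element of the kernel, and distinct non-proportional extensions give linearly independent kernel elements by the unicity in Proposition~\ref{pr:unicity-integral}); comparing with the known value $\cork(\Phi_C)$ computed cohomologically in \cite{CLM98} (for $k=1$ these ranges are exactly the Mukai genera, where the corank is known), I would conclude that this subspace is \emph{all} of $\ker(\trsp\Phi_C)$, so every ribbon is the first-order shadow of an honest $K3$ section of $M_{g_1}$, hence integrable; the unicity in Proposition~\ref{pr:unicity-integral} shows the integral surface is unique, and it is a $K3$ for $e$ outside a proper closed subset.

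For \ref{sp:mukai-univ}, I would assemble these $K3$ extensions into a single variety and identify it with $M_{g_1}$. Concretely: the universal extension $X$ of $C$, if it exists, is by construction (\cite{cds}) the unique arithmetically Gorenstein variety in $\P^{g_1+\nu}$, $\nu=\cork(\Phi_C)-1$, having all the $K3$ extensions of $C$ as linear sections through $C$; on the other hand $M_{g_1}\subset\P(U_{g_1})=\P^{g_1+\nu}$ (here the numerology $\dim(U_{g_1})-1 = g_1+\cork(\Phi_C)-1$ must be checked, and this is exactly where the known corank values feed in) is arithmetically Gorenstein, contains $C$ as a codimension-$\nu$ linear section, and all the $K3$ surfaces $S'$ constructed above sit inside $M_{g_1}$ as hyperplane sections through $C$. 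By the uniqueness statement in \cite{cds} for the universal extension — or, more self-containedly, by applying Zak--Lvovski-type extension theory: any extension of $C$ beyond these $K3$'s must lift compatibly, and the ribbon computation shows the family of extensions has exactly the dimension accounted for by $M_{g_1}$ — I would conclude $X=M_{g_1}$. The last sentence of the proposition, that $M_{g_1}$ is therefore not extendable, then follows because a universal extension is by definition maximal (any further extension would enlarge $\ker(\trsp\Phi_C)$, contradiction).

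\medskip
\textbf{Main obstacle.} The delicate point is \ref{sp:all-integrable}: showing that \emph{every} element of $\ker(\trsp\Phi_C)$ integrates, not just those tangent to the family of $K3$ sections of $M_{g_1}$. This is a dimension count that relies essentially on the precise value of $\cork(\Phi_C)$ from \cite{CLM98} (and on the corresponding corrigendum for the borderline cases), together with the non-trivial input that the parameter space of $K3$ sections of $M_{g_1}$ through a fixed general $C$ really has the expected dimension — which in turn uses projective normality of $M_{g_1}$ and the triviality/finiteness of stabilizers from Theorem~\ref{stab-sous-espace} to control the $G$-action. A secondary subtlety is verifying property $N_2$ (equivalently: that $C\subset\P^{g_1}$ is cut out by quadrics with linear syzygies) for the general curve section in each of the four cases, so that Proposition~\ref{pr:unicity-integral} applies; for $g_1=7,8,9$ this is standard from Mukai's work, but I would double-check $g_1=10$ where one only knows it for curves on a $K3$.
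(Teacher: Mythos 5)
Your overall strategy is close to the paper's: produce the family of $K3$ extensions of $C$ inside $M_{g_1}$, observe that it has dimension $n(g_1)-g_1=\cork(\Phi_C)-1$, and play it against $\P(\ker(\trsp \Phi_C))$ using the unicity of integrals. But there is a genuine gap at the decisive step, namely your parenthetical claim that ``distinct non-proportional extensions give linearly independent kernel elements by the unicity in Proposition~\ref{pr:unicity-integral}''. Unicity of the integral gives only set-theoretic injectivity of the assignment from extensions to ribbons on the locus where the ribbon is non-trivial; it gives no linear independence. The assignment $\tilde\Lambda \mapsto 2C_{\tilde\Lambda\cap M_{g_1}}$ is indeed induced by a linear map between vector spaces of the same dimension, and what must actually be proved is that this linear map has trivial kernel. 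A non-zero kernel element is a $\P^{g_1}\supset\Lambda$ cutting out on $M_{g_1}$ the \emph{trivial} ribbon, that is (again by Proposition~\ref{pr:unicity-integral}) the cone over $C$ with vertex a point. Ruling this out is not formal --- for a quadric, say, tangent hyperplane sections are exactly such cones --- and the paper devotes Lemma~\ref{l:cone-mukai} to it: the proof uses the variety $M'_{g_1}$ of lines of $M_{g_1}$ through a point and the fact that $\dim(M'_{g_1})=\dim(M_{g_1})-4$, so that a cone $J(v,C)\subset M_{g_1}$ would force the cone of lines $K_v$ to meet $\tilde\Lambda$ in dimension at least $g_1-3$, contradicting that $\Lambda\cap M_{g_1}$ is a curve. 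Without this input your family of hyperplane sections could map onto a proper linear subspace of $\P(\ker(\trsp \Phi_C))$, and neither the integrability of \emph{all} ribbons nor the identification of $M_{g_1}$ with the universal extension would follow.

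Two smaller points. For \ref{sp:all-integrable} the paper counts all $K3$ extensions at once, via the dominance of $c^1_{g_1}$ (Mukai, resp.\ Cukierman--Ulmer for $g_1=10$) and the corank values of \cite{cm90} rather than \cite{CLM98}; this yields integrability of the \emph{general} ribbon directly, integrability of every ribbon being then deduced from \ref{sp:mukai-univ}. And your worry about property $N_2$ is resolved simply: a general $C$ here has $\Cliff(C)=\lfloor (g_1-1)/2\rfloor>2$, which suffices for Proposition~\ref{pr:unicity-integral}; the case of arbitrary smooth sections is postponed to Corollary~\ref{c:mukai-univ+} via Lemma~\ref{l:mukai-cliff}.
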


\medskip
The generality condition on $[C]$ may be explicited as the requirement
that $C$ is a smooth linear section of the Mukai variety $M_{g_1}$,
see Corollary~\ref{c:mukai-univ+}.

\begin{proof}[Proof of \ref{sp:all-integrable}]
In this paragraph, we show the first part of the proposition.
If $g_1 \leq 9$, by \cite[Thm.~6.1]{mukai1}
the general curve of genus $g_1$ lies on a 
$K3$, so the map $c_{g_1}^1: \KC_{g_1} ^1 \to \M_{g_1}$ is dominant,
and its generic fibre has dimension $22-2g_1$;
on the other hand, we know that 
$\cork(\Phi_C)=23-2g_1$ by \cite[Cor.~4.4]{cm90}.
If $g_1=10$, by \cite{cukierman-ulmer}
$\im(c_{10}^1)$ is a divisor in $\M_{10}$,  and the general fibre of
$c_{10}^1$ has dimension $3 = \cork(\Phi_C)-1$.
The upshot is that in both cases there is a family of surface
extensions of $C$ of the same dimension as the moduli space
$\P(\ker(\trsp \Phi_C))$ of ribbons on $C$.

Moreover $C$ has Clifford index
$\lfloor \frac {g_1-1} 2 \rfloor >2$, so it satisfies property $N_2$.
It follows that for every $e \in \ker(\trsp \Phi_C)$ the corresponding
ribbon is integrable to at most one surface,
see Proposition~\ref{pr:unicity-integral}.
By the dimension count above, this implies that the general such
ribbon must indeed be integrable;
we shall see in the proof of \ref{sp:mukai-univ} that they are in fact
all integrable, which will end the proof of the first part of the
proposition.
\end{proof}

\begin{lemma}
\label{l:cone-mukai}
Let $\Lambda$ be a $(g_1-1)$-dimensional linear subspace of
$\P(U_{g_1})$ such that
$C= \Lambda \cap M_{g_1}$ is a smooth curve.
Then there does not exist any $g_1$-dimensional linear space $\tilde\Lambda
\subset \P(U_{g_1})$ containing $\Lambda$, and
such that $\tilde\Lambda \cap M_{g_1}$ is the cone over $C$ (with
vertex a point).
\end{lemma}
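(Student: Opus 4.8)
The statement says a smooth curve section $C=\Lambda\cap M_{g_1}$ of the Mukai variety cannot be extended to a cone inside $\P(U_{g_1})$. The plan is to argue by contradiction: suppose $\tilde\Lambda \supset \Lambda$ is a $g_1$-dimensional linear space with $\tilde\Lambda\cap M_{g_1}$ equal to the cone over $C$ with vertex a point $p$. Since $C$ is smooth of dimension $1$, the cone $\tilde\Lambda\cap M_{g_1}$ is a surface, singular exactly at $p$ (the vertex of a cone over a smooth variety is a genuine singular point unless the cone is itself linear, which is impossible here since $C$ is a nondegenerate canonical curve, not a linear space). The contradiction I would aim for is that $M_{g_1}$ is smooth (it is homogeneous), so every linear section of dimension $\geq 1$ that fails to be smooth must fail for a reason compatible with Bertini — but a $2$-dimensional linear section of a smooth variety, by Bertini, is singular only along the base locus direction, and here a general such section (a $K3$ surface) is smooth. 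The point $p\in M_{g_1}$ is a smooth point, and the cone structure forces the tangent space $T_pM_{g_1}$ to contain the whole of $\tilde\Lambda$ intersected appropriately; more precisely every line of the cone through $p$ lies on $M_{g_1}$, so the lines of $M_{g_1}$ through $p$ sweep out at least the cone, i.e.\ the variety $M'_{g_1}$ of lines through $p$ (described in \ref{p:VMRT}) must have dimension at least $\dim(\text{cone})-1 = 1$. That alone is not yet a contradiction, so I need to be more quantitative.

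The key quantitative input is the dimension count for $M'_{g_1}$ from \ref{p:VMRT}: $\dim(M'_{g_1}) = \dim(M_{g_1})-4$. The lines through $p$ contained in $M_{g_1}$ form the family $M'_{g_1}\subset \P(T_pM_{g_1}/p)$ (the VMRT), so the union of all lines of $M_{g_1}$ through $p$ has dimension at most $\dim(M'_{g_1})+1 = \dim(M_{g_1})-3$. On the other hand, the lines of the cone $\tilde\Lambda\cap M_{g_1}$ through $p$ are a $1$-dimensional family (parametrised by $C$), and their union is the $2$-dimensional cone. Moreover — and this is the crucial point — each such line, being contained in the cone, lies in $\tilde\Lambda$; so these lines form a $1$-dimensional subfamily of $M'_{g_1}$ lying in the $(g_1-1)$-dimensional projectivised subspace $\P(\tilde\Lambda \cap T_pM_{g_1})\subset \P(T_pM_{g_1})$. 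I would then intersect $M'_{g_1}\subset \P(T_pM_{g_1})$ with this linear subspace: by the genericity of $\Lambda$ (hence of the flag $\Lambda\subset\tilde\Lambda$ and of $p$) and a dimension count, a general linear section of $M'_{g_1}$ of the appropriate codimension inside $\P(T_pM_{g_1})$ should be either empty or of the expected dimension, and I would check case by case ($g_1=7,8,9,10$, where $\dim M'_{g_1}=6,4,2,1$) that the expected dimension of this section is $0$ or negative, contradicting the existence of a $1$-dimensional family of cone lines through $p$.

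The hard part will be making the genericity argument honest: a priori $p$ and $\tilde\Lambda$ are not chosen generically — they are forced on us by the hypothetical cone — so I cannot directly invoke "general linear section of $M'_{g_1}$.'' The resolution I would pursue is to turn it around: fix the \emph{general} smooth section $C=\Lambda\cap M_{g_1}$ (the hypotheses of the lemma, together with Proposition~\ref{pr:unicity-integral} or the surrounding discussion, let me assume $C$ is general among smooth sections), and observe that if a cone extension existed then, by the unicity of integrals (Proposition~\ref{pr:unicity-integral}) applied to the ribbon direction $e\in\ker(\trsp\Phi_C)$ cut out by $\tilde\Lambda$, this cone would be \emph{the} surface extension in that direction — but for general $C$ that extension is a smooth $K3$ (a general linear section of $M_{g_1}$), not a cone, giving the contradiction directly. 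I expect the cleanest writeup combines both: use the VMRT dimension bound to show no cone can arise at any smooth point of $M_{g_1}$ lying on a general enough $C$, and fall back on Proposition~\ref{pr:unicity-integral} to pin down that the putative cone would have to coincide with the known smooth $K3$ extension. I would organise the proof as: (1) reduce to a smooth point $p$ of $M_{g_1}$ and a $1$-parameter family of lines of $M_{g_1}$ through $p$ contained in $\tilde\Lambda$; (2) interpret this family as a curve inside $M'_{g_1}\cap \P(\tilde\Lambda\cap T_pM_{g_1})$; (3) do the four-case dimension count using $\dim M'_{g_1}=\dim M_{g_1}-4$ to rule this out, reinforcing with Proposition~\ref{pr:unicity-integral} where needed for the genericity.
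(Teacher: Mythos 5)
You have identified the right key input (the family $M'_{g_1}$ of lines of $M_{g_1}$ through the vertex, with $\dim M'_{g_1}=\dim M_{g_1}-4$), but your dimension count runs in the wrong direction and would not close. Writing $m=\dim M_{g_1}$, the intersection of $M'_{g_1}$ (of dimension $m-4$) with the $(g_1-1)$-dimensional linear subspace $\P(\tilde\Lambda\cap T_{M,v})$ inside $\P(T_{M,v})\cong\P^{m-1}$ has expected dimension $(m-4)+(g_1-1)-(m-1)=g_1-4$, which equals $3,4,5,6$ for $g_1=7,8,9,10$ --- never ``$0$ or negative'' as your plan requires, so your case-by-case check would fail. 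The actual contradiction is the opposite one: the projective dimension theorem forces this intersection to have dimension \emph{at least} $g_1-4\geq 3$, whereas it can be at most $1$-dimensional, because any line of $M_{g_1}$ through $v$ contained in $\tilde\Lambda$ lies in $\tilde\Lambda\cap M_{g_1}$, which is the $2$-dimensional cone, hence is one of its rulings, and these are parametrised by the curve $C$. This also dissolves your genericity worry: no general position is needed, since the lower bound on intersection dimension is unconditional. The one fact you must supply to legitimise the count (and which you only gesture at) is that $\tilde\Lambda$ is entirely contained in the embedded tangent space $\bT_{M,v}$: each ruling $\vect{v,x}$, $x\in C$, lies on $M_{g_1}$ and hence in $\bT_{M,v}$, and these rulings span $\tilde\Lambda=\vect{v,C}$. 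This is exactly the paper's argument, which intersects the $(m-3)$-dimensional cone $K_v$ of lines of $M_{g_1}$ through $v$ with the $\P^{g_1}=\tilde\Lambda$ inside $\bT_{M,v}\cong\P^m$ to produce a subvariety of $M_{g_1}\cap\tilde\Lambda$ of dimension at least $g_1-3\geq 4$, which is absurd.

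Your fallback via Proposition~\ref{pr:unicity-integral} does not rescue the argument. First, it would only apply to general $C$, whereas the lemma is stated and used (in Corollary~\ref{c:mukai-univ+}) for an arbitrary smooth curve section. Second, it is circular: the lemma is precisely the input needed in the proof of \ref{sp:mukai-univ} to show that no $\tilde\Lambda$ induces the trivial ribbon, i.e.\ that the map $r$ is injective at the linear-algebra level. A $\tilde\Lambda$ cutting out the cone corresponds to the trivial ribbon, that is to the zero vector rather than to a point of $\P(\ker(\trsp\Phi_C))$, so there is no nonzero direction $e$ whose ``unique integral'' you could compare the putative cone with.
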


\begin{proof}
Let $v \in M=M_{g_1}$ be a point such that the cone $J(v,C)$ is
contained in $M$.
For all $x \in C$ the line $\vect {v,x}$ is contained in $M$
hence in the projective tangent space $\bT_{M,v}$, a
$\P^{m}$, $m=\dim(M)$. Since $C$ spans $\Lambda$, it follows that
$\tilde \Lambda = \vect{v,C}$ is a $\P^{g_1}$ contained in
$\bT_{M,v}$.
On the other hand the lines contained in $M$ and passing through
$v$ form a cone $K_v$ over $M' \subset \P(T_{M,v})$
(the homogeneous variety indicated in \ref{p:VMRT})
with vertex the point $v$.
Since $M'$ has dimension $m-4$,
the cone $K_v$ is $(m-3)$-dimensional contained in $\bT_{M,v}$.
Since also $\tilde \Lambda$ is contained in the
$m$-dimensional $\bT_{M,v}$,
the intersection $K_v \cap \tilde \Lambda$ must be at least
$(g_1-3)$-dimensional,
which is in contradiction with the fact that $\Lambda \cap M$ is
a curve. 
\end{proof}

\begin{proof}[Proof of \ref{sp:mukai-univ}]
Let $\Lambda$ be a general $\P^{g_1-1} \subset \P(U_{g_1})$, and
$C= \Lambda \cap M_{g_1}$. We denote by $\P(U_{g_1})/\Lambda$ the closed subset of 
${\bf G}(g_1, \P(U_{g_1}))$ of all linear subspaces of $\P(U_{g_1})$ of dimension $g_1$ containing $\Lambda$. We want to prove that the rational map
\[
  r:
  \tilde \Lambda \in \P(U_{g_1})/\Lambda
  \dasharrow  2C_{\tilde \Lambda \cap M_{g_1}} \in \P(\ker(\trsp \Phi_C))
\]
is an isomorphism (this is the map which sends $\tilde\Lambda$ 
to the ribbon of $C$ in the surface
$\tilde\Lambda \cap M_{g_1}$).

This is a projectivity (i.e., it is induced by a linear map at vector space level) between projective spaces of the same dimension
$n(g_1)-g_1$
(remember that $n(g_1)=\dim(\P(U_{g_1}))$, and
the values of $\cork(\Phi_C)$ are 
given in the proof of \ref{sp:all-integrable}).
Therefore it suffices to prove that
the associated linear map between vector spaces has trivial 
kernel. Since ${\rm Cliff}(C)>2$ the trivial ribbon over $C$ may only be integrated
to the cone over $C$ (Proposition~\ref{pr:unicity-integral}).
By Lemma~\ref{l:cone-mukai} there is no $\tilde \Lambda$ cutting out
the cone over $C$ in $M_{g_1}$, so our map $r$ is indeed injective, hence
an isomorphism.
\end{proof}

\begin{corollary}
\label{c:mukai-univ+}
Let $C$ be a smooth curve linear section of $M_{g_1}$.
Then \ref{sp:all-integrable} and \ref{sp:mukai-univ} hold for $C$.
In particular, the corank of the Gauss--Wahl map $\Phi_C$ is the same
for all smooth curve linear sections of $M_{g_1}$.
\end{corollary}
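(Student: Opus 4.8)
The plan is to deduce Corollary~\ref{c:mukai-univ+} from Proposition~\ref{p:mukai-univ} by a specialization/semicontinuity argument, using the fact that the statement we want to prove is essentially ``constant in families of smooth curve linear sections''. First I would recall that, by \cite[Thm.~6.1]{mukai1} for $g_1 \leq 9$ and \cite{cukierman-ulmer} for $g_1=10$, the general smooth curve linear section of $M_{g_1}$ is a general point of $\M_{g_1}$ (resp.\ of $\im(c_{10}^1)$), so Proposition~\ref{p:mukai-univ} already covers the general smooth curve linear section. The point is thus to pass from ``general'' to ``all''. I would consider the universal family over the open subset $U \subset \G(g_1-1,\P(U_{g_1}))$ parametrizing those $\Lambda$ for which $\Lambda \cap M_{g_1}$ is a smooth curve; this $U$ is irreducible since $\G(g_1-1,\P(U_{g_1}))$ is. Over $U$ we have the family of curves $\mathcal{C} \to U$ together with the family of Gauss--Wahl maps $\Phi_{\mathcal{C}_\Lambda}$.

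The key steps, in order, are as follows. (1) Every smooth curve linear section $C=\Lambda\cap M_{g_1}$ has Clifford index $\lfloor \tfrac{g_1-1}{2}\rfloor > 2$: this holds because $C$ lies on the $K3$ surface $\Sigma\cap M_{g_1}$ for a suitable $g_1$-dimensional $\Sigma \supset \Lambda$ (such $\Sigma$ exists and gives a smooth $K3$ for $\Sigma$ general through $\Lambda$, since the smooth $K3$ sections form a dense open set), and the Clifford index of $C$ as computed by \cite{green-lazarsfeld} together with the computation in Lemma~\ref{l:cliff} applied with $k=1$ gives this value; in particular $C$ satisfies property $N_2$. (2) Consequently, for \emph{every} $\Lambda \in U$ and every $e \in \ker(\trsp\Phi_C)$, the ribbon $C_e$ is integrable to \emph{at most one} surface, by Proposition~\ref{pr:unicity-integral}. (3) The corank $\cork(\Phi_{\mathcal{C}_\Lambda})$ is upper semicontinuous on $U$; since it attains the value $23-2g_1$ (resp.\ $4$ if $g_1=10$) on the dense open set of $\Lambda$'s giving a general curve, we get $\cork(\Phi_{\mathcal{C}_\Lambda}) \geq 23-2g_1$ for all $\Lambda \in U$. (4) On the other hand, exactly the argument in the proof of \ref{sp:mukai-univ} applies verbatim to \emph{any} $\Lambda \in U$: the map $r : \tilde\Lambda \in \P(U_{g_1})/\Lambda \dasharrow 2C_{\tilde\Lambda\cap M_{g_1}} \in \P(\ker(\trsp\Phi_C))$ is a linear map of projective spaces, its source has dimension $n(g_1)-g_1 = 23-2g_1-1$ (resp.\ $3$), and by Lemma~\ref{l:cone-mukai} — which is stated for an arbitrary smooth curve linear section — together with step~(2) it is injective, hence the target has dimension $\geq 23-2g_1-1$, i.e.\ $\cork(\Phi_C) \leq 23-2g_1$ (resp.\ $\leq 4$). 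Combining (3) and (4) gives $\cork(\Phi_C) = 23-2g_1$ (resp.\ $4$) for all $\Lambda \in U$, and then $r$ is an isomorphism between projective spaces of equal dimension, proving \ref{sp:mukai-univ}; surjectivity of $r$ then also yields that every ribbon on $C$ integrates (to a linear section of $M_{g_1}$), which is \ref{sp:all-integrable}.

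The main obstacle I anticipate is step~(3), or more precisely making sure that one genuinely controls $\cork(\Phi_C)$ from below for \emph{all} smooth curve linear sections, not just generically: semicontinuity of $h^0$ of the cokernel of a map of vector bundles on the total space of the family is standard, but one must check that the Gauss--Wahl maps fit together into a morphism of vector bundles over $U$ (they do, since $H^0$ and $H^1$ of the relevant sheaves on the fibres have constant dimension once the Clifford index is $>2$, so the relevant cohomology sheaves are locally free and base change holds). Once this is in place the rest is formal: the upper bound from step~(4) and the lower bound from step~(3) pinch the corank to the expected value, and the argument of Proposition~\ref{p:mukai-univ} goes through unchanged. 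Note that no new generality hypothesis on $C$ is needed anywhere — Lemma~\ref{l:cone-mukai}, Proposition~\ref{pr:unicity-integral} and the property $N_2$ input are all available for an arbitrary smooth curve linear section — which is exactly why the corollary upgrades ``general'' to ``all'', and in particular forces $\cork(\Phi_C)$ to be constant over $U$.
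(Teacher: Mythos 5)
Your strategy (semicontinuity plus injectivity of $r$) does not close, and there are two genuine gaps. The first is in step (1): you cannot get $\Cliff(C)>2$ for an \emph{arbitrary} smooth curve linear section from Lemma~\ref{l:cliff}. That lemma assumes $(S,L_1)$ is very general, so that $\Pic(S)=\Z\langle L_1\rangle$, and is stated for $k>1$ (its formula would give $-2$ at $k=1$). A special smooth curve section $C=\Lambda\cap M_{g_1}$ lies only on $K3$ sections $\Sigma\cap M_{g_1}$ that may have large Picard number, and then Green--Lazarsfeld only tells you that $\Cliff(C)$ is computed by the restriction of \emph{some} line bundle on the surface, which can be small. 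This is exactly why the paper proves the separate Lemma~\ref{l:mukai-cliff}, whose proof is not a Picard-lattice argument at all: for $g_1=7,9$ it invokes Mukai's characterization of curve sections of $M_{g_1}$ by the non-existence of a $g^1_5$ resp.\ a $g^1_6$, for $g_1=8$ it uses property $N_2$ of $\G(2,6)$ together with Green's hyperplane section theorem, and for $g_1=10$ it quotes Cukierman--Ulmer. Without this input your step (2) is unsupported.

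The second gap is more serious: step (4) is logically backwards. Injectivity of the linear map $r$ from $\P(U_{g_1})/\Lambda$ (of dimension $n(g_1)-g_1$) to $\P\bigl(\ker(\trsp \Phi_C)\bigr)$ (of dimension $\cork(\Phi_C)-1$) gives $n(g_1)-g_1\leq\cork(\Phi_C)-1$, i.e.\ the \emph{lower} bound $\cork(\Phi_C)\geq 23-2g_1$ --- the same inequality as your semicontinuity step (3), not the upper bound you assert (your own intermediate sentence ``the target has dimension $\geq 23-2g_1-1$'' already says so). So nothing pinches: you have two lower bounds and no upper bound, the corank is not determined, and surjectivity of $r$ (hence integrability of all ribbons and the identification of the universal extension) does not follow. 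The paper closes the argument by a different mechanism: injectivity of $r$ shows that $M_{g_1}$ sits as a linear section inside the universal extension of $C$, and since $M_{g_1}$ is not extendable --- it is the universal extension of a \emph{general} curve section by the already-proved Proposition~\ref{p:mukai-univ} --- it must coincide with the universal extension of $C$, which forces $\cork(\Phi_C)$ to equal the generic value. You need either this non-extendability argument or some other genuine upper bound on the corank; semicontinuity goes the wrong way for that.
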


\begin{proof}
By Lemma~\ref{l:mukai-cliff} below, one has $\Cliff(C)>2$.
Therefore, Lemma~\ref{l:cone-mukai} and the proof
of \ref{sp:mukai-univ} show that the map
\[
  r:
  \tilde \Lambda \in \P(U_{g_1})/\Lambda
  \to  2C_{\tilde \Lambda \cap M_{g_1}} \in \P(\ker(\trsp \Phi_C))
\]
(in the same notation as in the
proof of \ref{sp:mukai-univ})
is an injective linear map of projective spaces.
This implies that the universal extension of $C$ is an extension of
$M_{g_1}$. But $M_{g_1}$ is not extendable
(because for a general $C'$ curve linear section of $M_{g_1}$,
$M_{g_1}$ is the universal extension of $C'$ by what we have already
proved), 
therefore the Mukai
variety is also the universal extension of $C$, and the corank of the
Gauss--Wahl map of $C$ equals that of a general curve linear section 
of $M_{g_1}$.
\end{proof}

\begin{lemma}
\label{l:mukai-cliff}
Let $g_1=7,8,9,10$ and $C$ be a smooth linear section of the Mukai
variety $M_{g_1}$. Then one has $\Cliff(C)>2$.
\end{lemma}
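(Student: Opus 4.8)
The statement to prove is that a smooth linear section $C$ of a Mukai variety $M_{g_1}$ (for $g_1=7,8,9,10$) has Clifford index $\Cliff(C)>2$. Here $C$ may have any dimension: it ranges from a surface linear section (a $K3$) down to a curve linear section. For surfaces the claim is about the Clifford index of the generic hyperplane section curve, but really the cleanest statement to prove is for the curve linear sections, and then deduce the general case; in any event what matters is a curve $C$ cut out on $M_{g_1}$ by a general linear space of the appropriate codimension. Such a $C$ has genus $g_1\in\{7,8,9,10\}$, so $g_1\geq 7$ and hence $\lfloor (g_1-1)/2\rfloor\geq 3$. The plan is to show that $\Cliff(C)$ cannot be small, using the geometry of $M_{g_1}$ — specifically the absence of low-degree special linear series forced by the projective geometry of the Mukai variety.

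First I would invoke the classical description of curves with small Clifford index: a smooth curve $C$ with $\Cliff(C)\leq 2$ is either hyperelliptic, trigonal, a smooth plane quintic (giving $g=6$, which is excluded here since $g_1\geq 7$), or bielliptic with the relevant $g^1_4$ computing the index, or more precisely by Martens--Mumford type results $\Cliff(C)\leq 2$ forces the existence of a $g^1_3$ or a $g^1_4$ or a $g^2_5$ of the appropriate kind. I would then rule each of these out. For the canonical embedding $C\subset\P^{g_1-1}$, which by Mukai's theory coincides with the linear section of $M_{g_1}$, the existence of a $g^1_2$ or $g^1_3$ would force $C$ to lie on a rational normal scroll or on the Veronese of genus-$6$ type; but $C$ sits on $M_{g_1}$, whose linear sections are known to be projectively normal, cut out by quadrics, and to satisfy property $N_2$ (this is classical for Mukai varieties). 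A curve on $M_{g_1}$ with a $g^1_3$ would propagate (via the surface section, which is a $K3$ with Picard rank one for the general linear section) to a contradiction with the Picard group of the $K3$: by Green--Lazarsfeld, the Clifford index of a curve on a $K3$ surface is either $\lfloor(g-1)/2\rfloor$ or computed by the restriction of a line bundle on the surface, and for a general Mukai $K3$ with $\Pic(S)=\mathbf Z\langle L\rangle$ there simply is no sublattice producing a small Clifford index — exactly the same mechanism as in Lemma~\ref{l:cliff}.

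Concretely, I would argue as follows. It suffices to treat the curve linear section $C$, because if $C'$ is a higher-dimensional linear section then a general hyperplane-section curve of $C'$ is again a curve linear section of $M_{g_1}$ and $\Cliff$ behaves well under general hyperplane sections (and the surface case is the one of real interest, reducing to its curve sections). So let $C$ be a smooth curve linear section; a general such $C$ lies on a smooth $K3$ surface $S$ that is itself a general linear section of $M_{g_1}$, hence has $\Pic(S)=\mathbf Z\langle L\rangle$ with $L^2=2g_1-2\geq 12$ and $C\in|L|$. By Green--Lazarsfeld together with the computation in the proof of Lemma~\ref{l:cliff} (now with $k=1$, $L_1=L$, so there is no proper sublattice to consider and $\Cliff(C)=\lfloor(g_1-1)/2\rfloor$), we get $\Cliff(C)=\lfloor(g_1-1)/2\rfloor\geq 3$. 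This handles the general linear section. For an arbitrary smooth curve linear section $C$, I would use semicontinuity: $\Cliff$ is lower semicontinuous in families, and the family of smooth curve linear sections of $M_{g_1}$ is irreducible (parametrized by an open subset of a Grassmannian), so the general value $\lfloor(g_1-1)/2\rfloor$ is a lower bound only generically — which is the wrong direction.

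Hence the cleaner route, and the one I expect the authors take, is direct: for \emph{any} smooth curve linear section $C$ of $M_{g_1}$, suppose $\Cliff(C)\leq 2$. Then $C$ carries a pencil $|A|$ with $\deg A\leq 4$ (and $\leq\gon(C)$ small), or is a plane quintic. The plane quintic case is out since $g_1\neq 6$. A $g^1_k$ with $k\leq 4$ on $C\subset\P^{g_1-1}$ canonical spans, via the residual $|K_C-A|$, a $(g_1-1-k)$-plane that meets $C$ in a divisor of degree $\deg A$; equivalently $C$ admits a $k$-secant $(k-2)$-plane (or special linear series) structure forcing many trisecant or quadrisecant lines/planes. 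But $C$ lies on $M_{g_1}$, and lines contained in $M_{g_1}$ through a point form the variety $M'_{g_1}$ of dimension $\dim M_{g_1}-4$ described in \ref{p:VMRT}; a $g^1_3$ on $C$ would force a one-parameter family of trisecant lines to $C$, each of which — being a trisecant to a quadratically-cut-out variety — must lie on $M_{g_1}$, and one checks this is incompatible with $\dim M'_{g_1}$ and with $\Lambda\cap M_{g_1}$ being a curve, exactly in the spirit of Lemma~\ref{l:cone-mukai}. The $g^1_4$ and bielliptic cases are handled by the analogous count with planes. \textbf{The main obstacle} will be organizing this case analysis cleanly for all four values $g_1=7,8,9,10$ simultaneously — in particular ruling out a $g^1_4$, since a quadrisecant plane argument is more delicate than the trisecant line argument — and making sure the known structural facts about Mukai varieties (projective normality, quadratic generation, the $\VMRT$ description) are invoked with precise references; this is where I would expect the authors to lean on Aprodu's help, acknowledged in the Thanks, presumably for a clean statement relating the Clifford index of $C$ to the nonexistence of such secant configurations on a Fano/Mukai variety of the given coindex.
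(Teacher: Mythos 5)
Your argument covers only the \emph{general} curve linear section, whereas the lemma is needed (and is used, in Corollary~\ref{c:mukai-univ+}) for \emph{every} smooth curve linear section of $M_{g_1}$. The Green--Lazarsfeld computation on a Picard-rank-one $K3$, as in Lemma~\ref{l:cliff} with $k=1$, indeed gives $\Cliff(C)=\lfloor (g_1-1)/2\rfloor\geq 3$ generically, and you correctly observe that semicontinuity of the Clifford index runs the wrong way to extend this to all sections. But the fallback you propose for the non-general case is not a proof. The trisecant part does work: a $g^1_3$ on the canonical curve $C=\Lambda\cap M_{g_1}$ produces a one-parameter family of trisecant lines, each of which lies on the quadratically generated $M_{g_1}$ and in $\Lambda$, hence in the curve $C$ itself --- absurd. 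The decisive case, however, is the $g^1_4$: for $g_1=7,8,9$ (no plane quintics in these genera, no exceptional Clifford dimension below genus $10$) the condition $\Cliff(C)\leq 2$ is \emph{equivalent} to the existence of a $g^1_4$, and a $4$-secant plane to a variety cut out by quadrics need not lie on that variety (two conics in a plane can meet in exactly four points), so the analogue of the trisecant argument simply does not close. You flag this yourself as ``the main obstacle,'' which is an accurate self-assessment: the essential case is left open, and I do not see how to complete it along the lines you indicate.

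For comparison, the paper's proof is a compilation of much stronger known results, not an intersection-theoretic argument. For $g_1=7,9$ it invokes Mukai's characterizations \cite{mukai-symm0,mukai-symm1,mukai-symm2}: a genus-$7$ (resp.\ genus-$9$) curve is a linear section of $S_{10}$ (resp.\ of $\LG(3,6)$) if and only if it has no $g^1_5$ (resp.\ no $g^1_6$), which a fortiori excludes a $g^1_4$ and settles those cases for \emph{all} smooth sections at once. For $g_1=8$ the characterization is in terms of $g^2_7$ and does not directly control pencils, so the paper instead uses that $\G(2,6)$ satisfies property $N_2$ \cite{josefiak-pragacz}, propagates $N_2$ to $C$ by Green's hyperplane section theorem, and concludes $\Cliff(C)>2$ by the Green--Lazarsfeld theorem in the appendix to \cite{green84}. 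For $g_1=10$ it cites \cite[Rmk.~2.7]{cukierman-ulmer}. Your plan gestures at the right circle of ideas (quadratic generation, $N_2$, the description of lines through a point in \ref{p:VMRT}) but does not identify the inputs that actually make the uniform statement true, so as it stands the proposal has a genuine gap.
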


\begin{proof}
For $g_1=7,8,9$, Mukai
\cite{mukai-symm0,mukai-symm1,mukai-symm2}
has proven that a curve of genus $g_1$ is a
linear section of $M_{g_1}$ if and only if it has
no $g^1_5$,
resp.\ no $g^2_7$,
resp.\ no $g^1_6$.
On the other hand a curve of genus $g_1<10$ has Clifford index
strictly larger than $2$ if and only it has no $g^1_4$. So for
$g_1=7,9$, any smooth curve linear section of $M_{g_1}$ has Clifford index
strictly larger than $2$.

For $g_1=8$ we use a different argument.
By \cite{josefiak-pragacz} the Mukai variety $M_8=\G(2,6)$ satisfies
the property 
$N_2$. Then we find by applying repeatedly Green's hyperplane sections
theorem \cite[Thm.3.b.7]{green84} that $C$ enjoys property $N_2$ as well.
In turn this implies by the Green--Lazarsfeld theorem
\cite[Appendix]{green84} that $\Cliff(C)>2$.

For $g_1=10$, it is proven in \cite[Rmk.~2.7]{cukierman-ulmer} that
any smooth curve linear section of $M_{10}$ has Clifford index
strictly larger than $2$.
\end{proof}

\begin{corollary}
\label{c:isom-conj.k1}
Two smooth curve linear sections of $M_{g_1}$ are isomorphic if
and only if they are conjugate under the action of $G$.
\end{corollary}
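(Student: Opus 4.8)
The statement to prove is that two smooth curve linear sections $C = \Lambda \cap M_{g_1}$ and $C' = \Lambda' \cap M_{g_1}$ of a Mukai variety ($g_1 \in \{7,8,9,10\}$) are isomorphic as abstract curves if and only if they are conjugate under $G = G_{g_1}$. The ``if'' direction is immediate, since an element of $G$ carrying $\Lambda$ to $\Lambda'$ induces an isomorphism $C \xrightarrow{\sim} C'$. So the content is the ``only if'' direction. The natural approach is to bootstrap from what we already know: by Lemma~\ref{l:mukai-cliff} we have $\Cliff(C)>2$, so $C$ is projectively normal and defined by quadrics with linear syzygies in its canonical embedding $\P^{g_1-1}$, and by Corollary~\ref{c:mukai-univ+} the Mukai variety $M_{g_1}$ is the universal extension of $C$ (and of $C'$).

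The key step is the uniqueness built into the universal extension. An isomorphism $\phi : C \xrightarrow{\sim} C'$ of abstract curves extends (uniquely, up to the choice of $\phi$) to a projective isomorphism $\bar\phi : \P^{g_1-1} \to \P^{g_1-1}$ carrying the canonical model of $C$ to that of $C'$, since in both cases the embedding is the canonical one. Now $\bar\phi$ transports the ribbons on $C$ to the ribbons on $C'$, i.e., it identifies $\P(\ker(\trsp\Phi_C))$ with $\P(\ker(\trsp\Phi_{C'}))$ compatibly. The universal extension $X \supset C$ is characterized intrinsically in terms of $C \subset \P^{g_1-1}$ (it is the variety over which \emph{all} the ribbons integrate simultaneously, cf. \cite{cds} and the proof of \ref{sp:mukai-univ}), hence $\bar\phi$ carries the universal extension of $C$ to that of $C'$; but both universal extensions \emph{are} $M_{g_1}$ sitting inside $\P(U_{g_1})$, intersected with the respective $g_1$-codimensional coordinate-like flags. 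More precisely: $X = M_{g_1}$ and $X' = M_{g_1}$ as subvarieties of $\P(U_{g_1})$, with $C = \Lambda \cap M_{g_1}$, $C' = \Lambda' \cap M_{g_1}$, and the extension-theoretic picture means $\P(U_{g_1}) = \P^{n(g_1)}$ is recovered from $C \subset \Lambda$ together with the universal extension, with $\Lambda$ the ambient linear span of $C$. So $\bar\phi$ extends to a projective isomorphism $\Psi : \P(U_{g_1}) \to \P(U_{g_1})$ carrying $M_{g_1}$ to $M_{g_1}$ and $\Lambda$ to $\Lambda'$.

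The final step is then to promote the projective automorphism $\Psi$ of $\P(U_{g_1})$ stabilizing $M_{g_1}$ to an element of $G$. This is exactly the assertion that the Mukai varieties have ``no extra linear automorphisms'', i.e., every projectivity of $\P(U_{g_1})$ preserving $M_{g_1}$ comes from the action of $G_{g_1}$ — equivalently $\mathrm{Aut}(M_{g_1}) = G_{g_1}$ (the relevant groups here being adjoint/simply connected as appropriate so that this is literally the image of $G$ in $\PGL$). For $g_1 = 7,8,9,10$ this is classical: the homogeneous varieties $S_{10}, \G(2,6), \LG(3,6), G_2/P_2$ in their minimal embeddings have automorphism group exactly the corresponding simple group, so any $\Psi$ as above lies in $G$. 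Since $\Psi(\Lambda) = \Lambda'$ and $\Psi \in G$, the subspaces $\Lambda$ and $\Lambda'$ are conjugate under $G$, which is what we wanted.

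\textbf{Main obstacle.} The delicate point is the middle step: carefully arguing that the abstract isomorphism $\phi$ really does extend all the way up to a projectivity of the big ambient space $\P(U_{g_1})$ preserving $M_{g_1}$, rather than merely to a projectivity of $\P^{g_1-1}$ preserving the canonical model of the curve. This hinges on the intrinsic characterization of the universal extension from \cite{cds}: the datum of $C \subset \P^{g_1-1}$ (equivalently, the abstract curve with its canonical structure) determines the pair (universal extension $X$, embedding $X \subset \P^{n(g_1)}$) up to projectivity, and determines $\Lambda = \langle C\rangle$ as a distinguished linear subspace of $\P^{n(g_1)}$. Granting that — which is exactly the universal property established in Corollary~\ref{c:mukai-univ+} together with Proposition~\ref{pr:unicity-integral} — the argument is formal. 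One must also double-check the match between $\mathrm{Aut}(M_{g_1})$ and $G_{g_1}$ in each of the four cases (this is standard, e.g. via the fact that these are the closed $G$-orbits in $\P(U_{g_1})$ of the minuscule/adjoint representations and their linear automorphism groups are well documented), but no serious difficulty arises there.
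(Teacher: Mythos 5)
Your argument is correct in substance, but it takes a genuinely different route from the paper's. The paper stops one level up: using Corollary~\ref{c:mukai-univ+} it chooses $K3$ surface sections $S\supset C$ and $S'\supset C'$ of $M_{g_1}$ whose ribbons $2C_S$ and $2C'_{S'}$ correspond under the isomorphism $C\cong C'$, deduces from the uniqueness of ribbon integrals (Proposition~\ref{pr:unicity-integral}, applicable since $\Cliff(C)>2$ by Lemma~\ref{l:mukai-cliff}) a polarized isomorphism $(S,C)\cong(S',C')$, and then concludes by Mukai's theorem that projectively equivalent smooth \emph{surface} sections of $M_{g_1}$ are $G$-conjugate \cite[Thm.~0.2]{mukai1}. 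You instead lift the projectivity all the way up to $\P(U_{g_1})$ via the universal extension and finish with the identification of $\Aut(M_{g_1}\subset\P(U_{g_1}))$ with the image of $G$. Both endings work, but yours imports two ingredients the paper does not need: (i) a uniqueness-and-functoriality statement for the \emph{full} universal extension (the paper only proves the one-step uniqueness of surface integrals of ribbons in Proposition~\ref{pr:unicity-integral}; the stronger statement is extractable from \cite{cds} but should be cited precisely, since it is exactly the ``delicate point'' you flag), and (ii) the classical fact that every projectivity preserving $M_{g_1}$ comes from $G$ (true for all four cases --- none is on the exceptional list for $\Aut(G/P)=G$, and no outer automorphism acts on $U_{g_1}$ --- but nowhere stated in the paper). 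The paper's route is more economical because \cite[Thm.~0.2]{mukai1} packages precisely the surface-level input needed; your route buys a more uniform picture, rigidifying the whole chain $C\subset S\subset\cdots\subset M_{g_1}$ at once, and would adapt verbatim to linear sections of any intermediate dimension.
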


\begin{proof}
Let $C$ and $C'$ be two distinct general curve linear sections of
$M_{g_1}$, and assume that they are isomorphic
(being canonical curves, they are isomorphic as abstract curves if and
only if they are isomorphic as polarized varieties).

By \ref{sp:mukai-univ} (Corollary \ref{c:mukai-univ+})
we may choose two $K3$ surfaces $S$
and $S'$ sections of $M_{g_1}$ containing $C$ and $C'$ respectively,
and such that the two ribbons $2C_S$ and $2C'_{S'}$ are isomorphic.
By unicity of the integral of this ribbon, there exists an isomorphism
of polarized surfaces $\phi:(S,C) \cong (S',C')$ taking $C$ to $C'$.
By \cite[Thm.~0.2]{mukai1} there exists $\gamma \in G$
inducing $\phi$, and in particular $\gamma.C=C'$.
\end{proof}

\begin{proposition}
\label{pr:conj.k2}
Let $C$ be a general complete intersection of type $(1^\nu,2)$ in
$M_{g_1}$, $\nu=n(g_1)-g_1$. 
There are at most finitely many curves $C'$ complete intersection of
the same kind, projectively isomorphic to $C$ but not conjugate to it
modulo $G$. 
\end{proposition}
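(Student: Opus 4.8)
The plan is to mimic the strategy used for $k=1$ in \ref{p:apll-dm}, but now quotienting by the bigger group that describes \emph{all} the data of $C$ as a complete intersection of type $(1^\nu,2)$ in $M=M_{g_1}$. First I would set up the incidence variety parametrizing such complete intersections: choose a $(g_1-1)$-dimensional linear section $\Lambda$ cutting out a surface $S=\Lambda\cap M$, and then the curve $C$ is cut on $S$ by a quadric, i.e.\ by an element of $\P(H^0(S,\O_S(2)))$. Equivalently, one considers the parameter space
\[
  \cP=\bigl\{(\Lambda,Q)\ :\ \Lambda\in\G(g_1-1,\P(U)),\ Q\in\P\bigl(H^0(\I_{\Lambda\cap M}(2))\bigr)\bigr\},
\]
a bundle over an open subset of the Grassmannian, with a natural $G$-action. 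I would compute $\dim\cP$ and the dimension of the rational classifying map
\[
  c_{M}^{(2)}:\cP/G\dashrightarrow\M_g,\qquad (\Lambda,Q)\longmapsto\bigl[(\Lambda\cap M)\cap Q\bigr],
\]
whose image is dense in $\im(c_g^k)$ for the relevant $k$ (here $g=1+(g_1-1)k^2$, but the point is simply that a general such $C$ arises this way). The statement to prove is exactly that $c_{M}^{(2)}$ is generically finite, which by Theorem~\ref{stab-sous-espace} (triviality of generic stabilizers, since $\nu=n(g_1)-g_1>k_{g_1}$ in all four cases) reduces to the numerical identity $\dim\cP-\dim G=\dim\M_g$.

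The key steps, in order, would be: (1) using that for general $\Lambda$ the surface $S=\Lambda\cap M$ is a $K3$ of genus $g_1$, compute $h^0(\I_{S/\P(U)}(2))=h^0(\P^{g_1},\I_{S/\P^{g_1}}(2))+\bigl(n(g_1)-g_1\bigr)\cdot h_{g_1}(1)+\binom{n(g_1)-g_1+1}{2}$, i.e.\ split quadrics in $\P(U)$ according to how many of the $\nu=n(g_1)-g_1$ linear forms defining $\Lambda$ they involve; the genuinely new contribution beyond the linear sections is $h^0(\P^{g_1},\I_{S/\P^{g_1}}(2))$, which equals $\binom{g_1+1}{2}-(2g_1-1)$ since $S$ is projectively normal and ideal-theoretically cut by quadrics (Clifford index $>2$, property $N_2$). (2) Conclude
\[
  \dim\cP=\dim\G(g_1-1,\P(U))+h^0(\I_{S/\P(U)}(2))-1,
\]
and check, case by case for $g_1=7,8,9,10$, that $\dim\cP-\dim G=3g-3$. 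Here I expect the arithmetic to come out cleanly precisely because it already did so for surfaces in \cite{mukai1}: passing from the $K3$ $S$ to its quadric section $C$ changes both sides by controlled amounts. (3) Invoke Theorem~\ref{stab-sous-espace}: for general $(\Lambda,Q)$ the stabilizer in $G$ of the pair — equivalently of the $(\nu+1)$-dimensional subspace of $U$ spanned by the $\nu$ linear equations of $\Lambda$ together with the quadric, or more simply of $C$ itself as a subscheme — is trivial, since the relevant codimension/dimension are $>k_g$. Hence the $G$-orbit through a general point of $\cP$ has dimension $\dim G$, so $\dim(\cP/G)=3g-3$, and $c_{M}^{(2)}$, being a map between varieties of the same dimension with dense image, is generically finite: a general fibre is finite, which is the assertion.

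The main obstacle is step (1)–(2): verifying that the dimension count balances, and in particular that there is no subtlety in the quadric computation. One must be sure that $h^0(\P^{g_1},\I_{S/\P^{g_1}}(2))$ has the naive value, which uses that a general $K3$ of genus $g_1$ (for $g_1\le 10$, $g_1\ne$ hyperelliptic/trigonal/quintic-plane ranges) is projectively normal and cut out by quadrics — this is where Lemma~\ref{l:mukai-cliff} and the property $N_2$ input are needed — and that adding one general quadric to $\Lambda$ genuinely drops the dimension by one, i.e.\ the generic quadric section of $S$ is a smooth curve not lying on any further linear section, which is exactly the content of Lemma~\ref{l:cone-mukai} applied to rule out degenerate configurations. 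A secondary point requiring care is to phrase ``stabilizer of $C$'' so that Theorem~\ref{stab-sous-espace} applies: one should stabilize the linear span of the ideal generators (a subspace of $U$ of the right dimension) rather than $C$ set-theoretically, and observe that a generic such subspace satisfies $(\star)$ since $\min(\codim,\dim)=\nu=n(g_1)-g_1>k_g$ — a finite check from the table in \ref{p:mukai.descr}. Once these are in place the conclusion is immediate.
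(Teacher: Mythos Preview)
Your approach has a fatal gap in step~(2): the identity $\dim\mathcal P-\dim G=3g-3$ is simply false. Setting aside the notational slips (the span $\Lambda$ of the surface $S$ should be a $\P^{g_1}$, not a $\P^{g_1-1}$; the fibre should be $\P(H^0(\O_S(2)))$, not $\P(H^0(\I_S(2)))$, since a quadric containing $S$ does not cut out a curve on it), any reasonable parametrization of complete intersections of type $(1^\nu,2)$ in $M$ gives
\[
  \dim\mathcal P-\dim G \;=\; \dim\KC_g^2 \;=\; 19+g,
\]
which is strictly less than $3g-3$ for all the relevant values $g=4g_1-3\ge 25$. (This is exactly the computation recorded in Corollary~\ref{c:stab}, which \emph{uses} the Proposition rather than proving it.) Consequently $c_M^{(2)}$ is never dominant onto $\M_g$, so you cannot conclude generic finiteness by comparing source and target dimensions: you would need to know $\dim(\im c_g^2)$ independently, and that is precisely what the Proposition delivers, not an input to it. Your step~(3) is correct---Theorem~\ref{stab-sous-espace} does give trivial generic stabilizer for $\Lambda$, hence for the pair---but this only tells you $\dim(\mathcal P/G)$, not the dimension of its image in moduli. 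The analogy with \ref{p:apll-dm} breaks down exactly here: for $k=1$ the map \emph{is} dominant (or onto a divisor), which is what makes that dimension count conclusive.

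The paper's proof takes a completely different route, avoiding any dimension comparison with $\M_g$. It argues by contradiction: if a general $C$ admitted a positive-dimensional family of projectively isomorphic but non-$G$-conjugate copies, then by semi-continuity so would every degeneration, in particular the ribbon $2C_1$ over a very general curve linear section $C_1$ of $M$. But such a ribbon integrates to a unique surface linear section $S$ of $M$ (Proposition~\ref{pr:unicity-integral}), and for \emph{surface} sections Mukai has proved that projectively isomorphic implies $G$-conjugate; combined with $\Aut(S)=\{1\}$ for $S$ very general (Proposition~\ref{pr:no-auto-vgK3}), this forces the two ribbons to be $G$-conjugate after all, a contradiction. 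The essential idea you are missing is this degeneration to ribbons, which transports Mukai's surface-level theorem down to curves.
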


\begin{proof}
We argue by contradiction and assume that for general $C$ as above
there exists a positive dimensional family of curves $C'$
projectively isomorphic to it but not $G$-conjugate.

We claim that a fortiori the same holds for all (even singular) complete intersection curves of type  
$(1^{\nu},2)$ in $M=M_{g_1}$.
Indeed, as $C$ moves in the family of such complete intersections,
the dimension of the family of curves in $M$
projectively isomorphic to $C$
(resp.\ of the stabilizer of $C$ in $G$) is upper semi-continuous, so
that the family of curves isomorphic to $C$ gets bigger whereas that of
curves conjugate to $C$ gets smaller.
The second of these semi-continuity statements follows from the fact
that there is a universal family of stabilizers over the family of
complete intersection curves in $M$, which we shall denote by
$\IC(1^\nu,2;M)$.
For the former, we have to consider the rational map from complete
intersections in $M$ to the moduli space $\M_g$, $g=1+4(g_1-1)$;
one may resolve its indeterminacy locus by suitably blowing-up
$\IC(1^\nu,2;M)$;
the obtained morphism gives us the
semi-continuity we want.

In particular the assumption we made by contradiction holds for
$C=2C_1$, a very general ribbon over a  
curve linear section $C_1$ of $M$,
which is indeed a complete intersection of type $(1^\nu,2)$ in $M$
for which the quadratic equation is a square.
Our contradiction assumption implies that there exists another ribbon
$C'$, projectively isomorphic but not $G$-conjugate to $C$
(we need to assume that the general complete intersection $C$ has
infinitely many projectively isomorphic but not conjugate copies to
reach this conclusion, to avoid that finitely many such copies all
degenerate to the same one with multiplicity, as $C$ degenerates to a
ribbon).

The ribbon $C'$ is incarnated on a copy 
$C_1'$ of $C_1$, which is a curve linear section of $M$ as well.
Both $C$ and $C'$ are integrable to surfaces $S$ and $S'$
respectively, both linear sections of $M$.
By the unicity of the extension (see \ref{pr:unicity-integral}), there is a projectivity $\omega$ such that $\omega(S)=S'$ and $\omega(C)=C'$. By  \cite[Thm.~0.2]{mukai1}, $S$ and $S'$ are conjugate under the action of $G$, so there exists $\gamma \in G$ such that $\gamma.S=S'$.
The very generality of $C$ implies that of $S$, so we may assume that
$S$ has no non-trivial projective automorphisms
by Proposition~\ref{pr:no-auto-vgK3}. This implies that $\omega=\gamma$, hence $\gamma.C=C'$, a contradiction.
\end{proof}

\begin{remark}
The rational map from $\IC(1^\nu,2;M)$ to $\M_g$ considered in the
proof is indeterminate on the locus corresponding to ribbons $2C_1$,
but this indeterminacy may be generically resolved at a general point of the ribbons locus by a single blow-up
along this locus: a general point in the exceptional divisor lying
over a ribbon $2C_1$ is mapped to a double cover of $C_1$ branched
over a bicanonical divisor, as one sees by stable reduction. 
\end{remark}

\begin{corollary}
\label{c:cork}
Let $k=2$ and $g=4g_1-3$.
For general $[C] \in \im(c_g^2)$, one has
\[
  \dim\bigl( (c_{g} ^2)^{-1}(C) \bigr)
  = \cork(\Phi_C) -1
  = 0.
\]
\vspace{-5mm}
\end{corollary}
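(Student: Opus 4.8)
The plan is to combine the finiteness statement of Proposition~\ref{pr:conj.k2} with the universal-extension machinery, exactly as announced in \ref{p:mukai.results}. First I would fix $g_1 \in \{7,8,9,10\}$, set $\nu = n(g_1) - g_1$, and observe that a curve $C$ corresponding to a general point of $\im(c_g^2)$ is a complete intersection of type $(1^\nu, 2)$ in the Mukai variety $M_{g_1}$ (as recalled in \ref{p:mukai.descr}, with $m = \dim M_{g_1} = \nu + 2$); here one uses that $k = 2 > 1$ so there is a \emph{unique} surface linear section $S$ of $M_{g_1}$ containing $C$, and $S$ is a $K3$ surface for general $C$. By Lemma~\ref{l:mukai-cliff} (or by $\Cliff(C) > 2$ via Part~\ref{pr:table-ii} of Proposition~\ref{pr:condCDS}, using $g_1 \geq 7$) the curve $C$ satisfies property $N_2$, so the theory of \cite{cds} applies: the fibre $(c_g^2)^{-1}(C)$ is essentially the linear space $\P(\ker(\trsp \Phi_C))$ of ribbons on $C$ that integrate, and in particular $\dim (c_g^2)^{-1}(C) = \cork(\Phi_C) - 1$. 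So the two displayed equalities reduce to the single assertion $\cork(\Phi_C) = 1$.

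To prove $\cork(\Phi_C) = 1$, I would compute the dimension of $\im(c_g^2)$ in two ways. On the one hand, $\dim \im(c_g^2) = \dim \KC_g^2 - \dim (c_g^2)^{-1}(C) = (19 + g) - (\cork(\Phi_C) - 1)$, since $\KC_g^2$ is irreducible of dimension $19 + g$ (the $19$ moduli of the prime $K3$ plus the $g = g(C)$ dimensions of the linear system). On the other hand, $\im(c_g^2)$ is dominated by the space of complete intersection curves of type $(1^\nu, 2)$ in $M_{g_1}$ modulo the action of $G = G_{g_1}$: choosing the $\nu$ hyperplanes amounts to a point of $\G(g_1 - 1, \P(U_{g_1}))$, whose dimension is $g_1 \cdot (n(g_1) - g_1 + 1) = g_1(\nu + 1)$, and the quadric section contributes $h^0(\P(U_{g_1}), \O(2)|_{M_{g_1}}) - 1$ parameters, where by the projective normality of the Mukai variety this equals $h^0(S, \O_S(2)) - 1$; Riemann--Roch on the $K3$ surface $S$ gives $h^0(S, \O_S(2)) = 2 + \tfrac12 (2L_1)^2 = 2 + 4(g_1 - 1) = 4g_1 - 2 = g + 1$, so the quadrics contribute $g$ parameters. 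Subtracting $\dim G$ and, crucially, \emph{adding back} the dimension of a general fibre of the rational quotient map to $\M_g$ — which by Proposition~\ref{pr:conj.k2} is $0$ — gives $\dim \im(c_g^2) = g_1(\nu+1) + g - \dim G$.

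Equating the two expressions yields $\cork(\Phi_C) - 1 = (19 + g) - \bigl(g_1(\nu+1) + g - \dim G\bigr) = 19 + \dim G - g_1(n(g_1) - g_1 + 1)$, and a direct computation case by case ($g_1 = 7,8,9,10$, reading $\dim G$ and $n(g_1) = \dim U_{g_1} - 1$ off the table in \ref{p:mukai.descr}) shows this equals $0$ in every case, hence $\cork(\Phi_C) = 1$. I expect the main obstacle to be the bookkeeping that makes the second dimension count rigorous: one must be careful that the parameter space of pairs (hyperplane section, quadric section) genuinely dominates $\im(c_g^2)$ with generic fibre of the announced dimension, which is precisely where Proposition~\ref{pr:conj.k2} (finiteness of projectively isomorphic but non-$G$-conjugate models) and Theorem~\ref{p:moduli-ic}-type rigidity enter; once this is granted, the arithmetic is routine. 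Finally, $\cork(\Phi_C) \geq 1$ is automatic since $S$ itself is a nontrivial extension of $C$, so the content is really the upper bound $\cork(\Phi_C) \leq 1$.
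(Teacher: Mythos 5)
Your first paragraph reproduces the paper's closing step: for $g_1\geq 7$ Proposition~\ref{pr:condCDS} gives $g\geq 25$ and $\Cliff(C)>2$, so the theory of \cite{cds} applies and $\dim\bigl((c_{g}^2)^{-1}(C)\bigr)=\cork(\Phi_C)-1$, reducing everything to showing the fibre is finite. Where you diverge is in how you establish that, and this is where the proposal breaks. The paper needs no dimension count: since $C$ spans a $\P^{g_1}$ in $\P(U_{g_1})$, each model of $C$ as a complete intersection of type $(1^\nu,2)$ determines its surface linear section uniquely, $G$-conjugate models give the same point of the fibre, and so the fibre injects into the set of models modulo $G$, which is finite by Proposition~\ref{pr:conj.k2}. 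That is the whole argument.

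Your alternative route via a two-fold computation of $\dim\im(c_g^2)$ is salvageable in principle but is wrong as written. The $\nu$ hyperplanes cut out the \emph{surface} $S=\Lambda\cap M_{g_1}$, so $\Lambda$ is a $\P^{g_1}$ (the span of $C$) and the relevant parameter space is $\G(g_1,\P(U_{g_1}))$, of dimension $(g_1+1)\nu$ --- not $\G(g_1-1,\P(U_{g_1}))$ of dimension $g_1(\nu+1)$ as you assert. With your formula the quantity $19+\dim G-g_1(\nu+1)$ equals $1,-2,-5,-7$ for $g_1=7,8,9,10$ respectively, so the claim that ``a direct computation shows this equals $0$ in every case'' is false for the count you actually set up (the negative values should have been a red flag, since the fibre is nonempty). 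With the corrected dimension one gets $19+\dim G-(g_1+1)\nu=0$ in all four cases. Two further points if you insist on this route: (i) subtracting $\dim G$ presupposes that the generic stabilizer is finite; you cannot use Corollary~\ref{c:stab} (it is deduced from \ref{c:cork} in the paper), and Theorem~\ref{p:moduli-ic}/\ref{p:stab-ic} concerns complete intersections in $\P^n$, not in $M_{g_1}$ --- the correct tool is Theorem~\ref{stab-sous-espace} applied to the span $\Lambda$ of $C$, whose hypothesis $(\star)$ holds here since $\min(\nu,g_1+1)>k_{g_1}$ for $g_1=7,8,9,10$; (ii) even once repaired, the count only re-derives, in a more fragile way, the finiteness of the fibre that Proposition~\ref{pr:conj.k2} already gives directly.
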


\begin{proof}
Being a general member of the image of $c_{g} ^2$,
the curve $C$ has a model as a complete intersection of
type $(1^{\nu},2)$ in $M_{g_1}$.
Such a curve spans a $\P^{g_1}$ in $\P(U_{g_1})$,
so there is a unique surface $S$ linear section of $M_{g_1}$ containing
it.
For $\gamma \in G$, the surface spanned by $\gamma.C$ is $\gamma.S$,
and the two pairs $(S,C)$ and $(\gamma.S,\gamma.C)$ give the same
point in the fibre of $c^2_g$ over $C$.

By Proposition~\ref{pr:conj.k2} $C$ has finitely many models
as a complete intersection in $M_{g_1}$,
up to the action of $G$.
Therefore the fibre $(c_{g} ^2)^{-1}(C)$ has dimension $0$.
By Proposition~\ref{pr:condCDS}, the results of \cite{cds} apply to
$C$, hence $\dim\bigl( (c_{g} ^2)^{-1}(C) \bigr)
= \cork(\Phi_C) -1$
(and in fact the fibre consists of a single point).
\end{proof}

\begin{corollary}
\label{c:stab}
The stabilizer of a general complete intersection curve of type
$(1^\nu,2)$ in $M_{g_1}$ under the action of $G$ is finite.
\end{corollary}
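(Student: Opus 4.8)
The plan is to deduce Corollary~\ref{c:stab} from the results already established, principally Corollary~\ref{c:cork} and Proposition~\ref{pr:conj.k2}, together with a standard orbit-dimension count. Let $C$ be a general complete intersection curve of type $(1^\nu,2)$ in $M=M_{g_1}$, with $\nu = n(g_1)-g_1$, and let $\Sigma \subset G$ be its stabilizer. First I would observe that $\Sigma$ is an algebraic subgroup of $G$, so it suffices to show $\dim\Sigma = 0$. The idea is to compare the dimension of the $G$-orbit of $C$ (inside the parameter space of complete intersection curves of this type in $M$) with what one can read off from the fibre of $c_g^2$.

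The key steps, in order, are as follows. First, describe the parameter space: complete intersection curves of type $(1^\nu,2)$ in $M$ are cut out by a $\nu$-dimensional linear subspace of $U_{g_1}$ together with a quadric, so they are parametrised by an open subset of $\G(\nu,\P(U_{g_1})) \times \P(\Sym^2(\cdot))$ or, more economically, since for general such data the curve already spans a $\P^{g_1}$, by the choice of that $\P^{g_1}$ (a point of $\G(g_1,\P(U_{g_1}))$) together with the quadric in it vanishing on the surface section — but in fact the cleanest route is to use the count already performed in \ref{p:apll-dm} and in the proof of Corollary~\ref{c:cork}. Second, invoke Corollary~\ref{c:cork}: for general $[C]\in\im(c_g^2)$ the fibre $(c_g^2)^{-1}(C)$ is a single reduced point, i.e.\ $C$ has, up to the action of $G$, a unique model as a complete intersection of type $(1^\nu,2)$ in $M$ (here one also uses Proposition~\ref{pr:conj.k2}, which gives finiteness of models up to $G$, hence that the $G$-orbit is the whole "model space" minus lower-dimensional stuff). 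Third, perform the dimension count: the space of complete intersection curves of type $(1^\nu,2)$ in $M$ has some computable dimension $D$, the moduli map to $\M_g$ is generically finite onto $\im(c_g^2)$ (which has dimension $\dim\KC_g^2 = 19+g = 19 + 4g_1 - 3$), so the general $G$-orbit has dimension $D - \dim\im(c_g^2)$; comparing with $\dim G$ gives $\dim\Sigma = \dim G - (D - \dim\im(c_g^2))$, and the arithmetic should come out to $0$.

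Concretely, I would set it up so that the parameter space of models is, up to birational identification, a $G$-bundle (generically) over $\im(c_g^2)$ with fibre the orbit $G/\Sigma$: since by Corollary~\ref{c:cork} the fibre of $c_g^2$ over a general $C$ is a single point, and $G$ acts on that fibre's preimage in the model space transitively modulo finite stuff (Proposition~\ref{pr:conj.k2}), we get $\dim(\text{model space}) = \dim\im(c_g^2) + \dim(G/\Sigma) = \dim\im(c_g^2) + \dim G - \dim\Sigma$. Solving, $\dim\Sigma = \dim G + \dim\im(c_g^2) - \dim(\text{model space})$, and the claim is that this vanishes. Alternatively, and perhaps more transparently, one can argue directly: if $\Sigma$ were positive-dimensional then moving $C$ in its linear system of deformations one would produce, by the semicontinuity argument already run in the proof of Proposition~\ref{pr:conj.k2}, a positive-dimensional stabilizer for the degeneration $C = 2C_1$ to a ribbon over a curve linear section $C_1$ of $M$; but then $\Sigma$ would stabilise $S$, the surface linear section of $M$ integrating $2C_1$ (since that surface is the unique integral of the ribbon, by Proposition~\ref{pr:unicity-integral}, it is canonically attached to $C$ and hence preserved by $\Sigma$), contradicting Proposition~\ref{pr:finite-auto-pol} which says the automorphism group of a polarised $K3$ is finite.

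The main obstacle I anticipate is making the reduction to the ribbon locus (or the bundle-dimension count) fully rigorous: one must be careful that positive-dimensionality of $\Sigma$ for general $C$ really does propagate to the ribbon $2C_1$ — this uses upper semicontinuity of stabiliser dimension over the family $\IC(1^\nu,2;M)$, exactly as in the proof of Proposition~\ref{pr:conj.k2}, and one must check that the ribbon $2C_1$ genuinely lies in the closure of the general complete intersection curves and is a point where the family of stabilisers has the expected (semicontinuously larger) dimension. Granting that, the contradiction with the finiteness of $\Aut(S,\O_S(1))$ (Proposition~\ref{pr:finite-auto-pol}) is immediate, since any element of $\Sigma$ fixing the ribbon $2C_1$ fixes its unique surface integral $S$ and acts on $S$ preserving the hyperplane class, i.e.\ lies in the finite group $\Aut(S,\O(1))$. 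The only remaining point is that a connected positive-dimensional $\Sigma$ cannot map to a finite group non-trivially, which is clear. This yields $\dim\Sigma = 0$, i.e.\ $\Sigma$ is finite, as claimed.
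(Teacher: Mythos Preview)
Your dimension-count approach is correct and is precisely what the paper does: the rational map $\IC(1^\nu,2;M)/G \dashrightarrow \im(c_g^2)$ is generically finite by Proposition~\ref{pr:conj.k2}, one has $\dim\im(c_g^2) = \dim\KC_g^2$ by Corollary~\ref{c:cork}, and a direct computation gives $\dim\KC_g^2 = \dim\IC(1^\nu,2;M) - \dim G$, whence the general $G$-orbit has full dimension $\dim G$ and the stabilizer is finite.

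Your alternative degeneration argument, however, has a gap at the final step. You correctly observe that the stabilizer $\Sigma'$ of the ribbon $2C_1$ preserves its unique surface integral $S$ and hence maps to $\Aut(S,\O_S(1))$, and that a connected positive-dimensional group must map trivially to this finite group. But that is not yet a contradiction: it only shows that the identity component of $\Sigma'$ lies in the \emph{kernel} of the restriction map, i.e.\ fixes $S$ (equivalently its linear span, a $\P^{g_1}$) pointwise, and you have not excluded positive-dimensional subgroups of $G$ acting trivially on a given $\P^{g_1}\subset\P(U_{g_1})$. The gap is easily filled by Theorem~\ref{stab-sous-espace}, which says that even the setwise stabilizer of a general $\P^{g_1}$ is trivial. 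But once that theorem is on the table the whole degeneration to a ribbon becomes superfluous: for a general $C$ of type $(1^\nu,2)$ its linear span is already a general $\P^{g_1}$, and $\mathrm{Stab}_G(C)$ is contained in the (trivial) stabilizer of that span---a direct argument yielding in fact a trivial, not merely finite, stabilizer.
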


\begin{proof}
There is a dominant rational map
\[
  \IC(1^\nu,2;M)/G \dashrightarrow \im(c_g^2),
\]
and by Proposition~\ref{pr:conj.k2} it is generically finite.
Therefore its source and target have the same dimension.
By Corollary~\ref{c:cork} the dimension of the target equals that of 
$\KC_g^2$. A direct computation shows that
\[
  \dim(\KC_g^2) = \dim \bigl( \IC(1^\nu,2;M)\bigr)
  -\dim(G),
\]
and the assertion follows.
\end{proof}

\begin{remark}
\label{rk:mukai.larger.k}
The statements \ref{pr:conj.k2},
\ref{c:cork}, and
\ref{c:stab}
generalize mutatis mutandis to general complete intersection curves of
type $(1^\nu,k)$ for all $k\geq 2$.
To do so, one ought to replace in the proofs the ribbons $2C_1$ by
``higher order ribbons'' $kC_1$, cut out by $\nu$ linear equations and
one $k$-th power 
of a linear equation, and note that $2C_1 \subset kC_1$.
\end{remark}

\bigskip
We conclude by noting that we
cannot directly reproduce the argument of
Corollary~\ref{c:isom-conj.k1} to prove that
<<~two very general complete intersection curves of type
$(1^\nu,2)$ in $M_{g_1}$
are isomorphic if and only if they are conjugate under the action of
$G$~>>, 
because of the possibility that two
non isomorphic curves $C$ and $C'$ in $\P^{g_1}$ may give the same
canonical curve after reimmersion in $\P^g$; in other words 
the problem is that the canonical class may have distinct square roots.

\subsection{Maximal variation}

\begin{proposition}
\label{p:maxvar}
Let $g_1=7,8,9$ (resp.\ $g_1=10$), and $C$ be a general genus $g_1$
curve (resp. a curve of genus $10$ general among those that lie on a
$K3$ surface).
Let $S$ be a polarized $K3$ surface having $C$ as a hyperplane
section.
There are only finitely many members $C' \in |\O_S(C)|$ that are
isomorphic to $C$.
\end{proposition}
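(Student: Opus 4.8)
The plan is to reduce the statement to the geometry of the Mukai variety $M_{g_1}$ and its linear sections, using the results already established. Let $S$ be a polarized $K3$ surface with hyperplane section $C$, where $C$ is as in the statement; by Corollary~\ref{c:mukai-univ+} (via \ref{sp:mukai-univ}) we know $M_{g_1}$ is the universal extension of $C$, and by \cite[Thm.~6.1]{mukai1} (resp.\ \cite{cukierman-ulmer} for $g_1=10$) a general such $C$ lies on a $K3$ which is a linear section of $M_{g_1}$. So I would first argue that, for $C$ general (resp.\ general among those on a $K3$), \emph{every} $K3$ surface $S$ with $C$ as a hyperplane section is a linear section $\Lambda_S \cap M_{g_1}$ of the Mukai variety: indeed $S \subset \P^{g_1}$ is an extension of $C \subset \P^{g_1-1}$, hence by universality of $M_{g_1}$ it is cut out from $M_{g_1}$ by a linear subspace $\Lambda_S \supset \Lambda_C$ of dimension $g_1$. (One must also invoke $\Cliff(C)>2$, Lemma~\ref{l:mukai-cliff}, so that Proposition~\ref{pr:unicity-integral} applies and the surface extension is unique given its ribbon.)

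Next, fix such a surface $S = \Lambda_S \cap M_{g_1}$ and consider a member $C' \in |\O_S(C)|$, i.e.\ a hyperplane section of $S \subset \P^{g_1}$, which is therefore also a hyperplane section $\Lambda_{C'} \cap M_{g_1}$ of $M_{g_1}$ for some $\Lambda_{C'} \subset \Lambda_S$ of dimension $g_1-1$. Suppose $C' \cong C$ as abstract (hence, being canonical curves, as polarized) curves. I would now apply Corollary~\ref{c:isom-conj.k1}: two smooth curve linear sections of $M_{g_1}$ are isomorphic if and only if they are conjugate under $G = \Aut(M_{g_1})$. Hence there exists $\gamma \in G$ with $\gamma \cdot C = C'$. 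Since $\gamma \cdot C$ spans $\gamma \cdot \Lambda_C$ and $C'$ spans $\Lambda_{C'} \subset \Lambda_S$, we get $\gamma \cdot \Lambda_C = \Lambda_{C'} \subset \Lambda_S$; thus $\gamma$ carries the $(g_1-1)$-plane $\Lambda_C$ into the $g_1$-plane $\Lambda_S$.

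The finiteness now follows from a stabilizer/parameter-count argument. The members $C' \in |\O_S(C)|$ isomorphic to $C$ correspond (up to the finitely-many-to-one choice of $\Lambda_{C'}$ from $C'$) to those $\gamma \in G$ with $\gamma \cdot \Lambda_C \subset \Lambda_S$, modulo the stabilizer of $\Lambda_C$ in $G$. By Theorem~\ref{stab-sous-espace} this stabilizer is trivial for general $C$ (the genericity of $\Lambda_C$ inside $\P(U_{g_1})$ is exactly what is assumed), so the set of relevant $\gamma$ injects into $|\O_S(C)| \cong \P^{g_1-1}$; conversely it is cut out inside $G$ by the closed condition $\gamma \cdot \Lambda_C \subset \Lambda_S$. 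I would show this locus is finite by a dimension count: the incidence variety $\{(\gamma,\Lambda): \gamma\cdot\Lambda_C \subset \Lambda, \ \dim\Lambda = g_1\}$ fibers over $G$ with fibers $\P(U_{g_1}/\gamma\cdot\Lambda_C)^{*}$-like of dimension $n(g_1)-g_1$, so has dimension $\dim G + n(g_1) - g_1$; projecting instead to the choice of $\Lambda_S$, a general fiber (which is what we want, $\Lambda_S$ being general among $g_1$-planes containing \emph{some} conjugate of $\Lambda_C$) has dimension $\dim G + n(g_1) - g_1 - \dim(\text{image})$, and by the computation in \ref{p:apll-dm} showing $c_{M_{g_1}}$ is generically finite one gets that this image is $n(g_1)-g_1 + \dim G$-dimensional minus the correction coming from the $G$-action, forcing the fiber to be $0$-dimensional. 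The main obstacle is making this last count fully rigorous: one has to be careful that $S$ is \emph{general} among linear-section $K3$'s (so that the relevant fiber of the incidence projection is generic and hence of minimal dimension), and to handle the $g_1=10$ case where $\im(c_{10}^1)$ is only a divisor, so the genericity of $C$ is constrained — here I would lean on \cite[Rmk.~2.7]{cukierman-ulmer} and the explicit dimension bookkeeping of \ref{p:apll-dm} rather than a naive count.
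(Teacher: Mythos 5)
Your reduction to the Mukai variety is sound: for $C$ general, every polarized $K3$ extension $S$ of $C$ is indeed a linear section $\Lambda_S\cap M_{g_1}$ with $\Lambda_S\supset\Lambda_C$ (by \ref{sp:mukai-univ}, \ref{c:mukai-univ+} and \ref{pr:unicity-integral}), and Corollary~\ref{c:isom-conj.k1} does convert each $C'\in|\O_S(C)|$ isomorphic to $C$ into a unique $\gamma\in G$ with $\gamma\cdot\Lambda_C\subset\Lambda_S$ (uniqueness by Theorem~\ref{stab-sous-espace}). The gap is in the final finiteness count. Your incidence variety $I=\{(\gamma,\Lambda):\gamma\cdot\Lambda_C\subset\Lambda\}$ only controls the fibre of $I\to\G(g_1,\P(U_{g_1}))$ over a \emph{general} point of the image; but $\Lambda_S$ is never general there: it is constrained to contain $\Lambda_C$ itself, i.e.\ it lies in the fibre of $I\to G$ over the identity, a positive-dimensional but highly special subfamily of the image. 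Worse, the proposition asserts the conclusion for \emph{every} polarized $K3$ having $C$ as a hyperplane section, so you are not entitled to any genericity of $S$ at all. Concretely, a $\gamma\notin\mathrm{Stab}(\Lambda_C)$ with $\gamma\cdot\Lambda_C\subset\Lambda_S$ forces the Schubert-type condition $\dim(\Lambda_C\cap\gamma\cdot\Lambda_C)=g_1-2$; an expected-dimension count makes the locus of such $\gamma$ look empty or finite, but the orbit $G\cdot\Lambda_C$ is far from dense in the Grassmannian and the Schubert condition is defined relative to $\Lambda_C$ itself, so no transversality is available and the count proves nothing. Any attempt to argue that a positive-dimensional component of $\{\gamma:\gamma\cdot\Lambda_C\subset\Lambda_S\}$ leads to a contradiction lands you back at the statement you are trying to prove.

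For comparison, the paper avoids this entirely: it uses the finiteness of the map $s:\P\bigl(\ker(\trsp\Phi_C)\bigr)\dashrightarrow\Kcan_{g_1}$ sending a ribbon over $C$ to the modulus of its unique $K3$ integral (\cite[Prop.~8.4]{cds}), which shows that the ribbons $2C'_S$ of the copies $C'$ of $C$ in $|\O_S(C)|$ fall into finitely many isomorphism classes (they all integrate to the same $S$), and then \cite[Cor.~8.6]{cds} to get finiteness of the set of $C'$ within each ribbon class; both ingredients apply precisely because Proposition~\ref{p:mukai-univ} guarantees that every ribbon over $C$ integrates to a unique surface. If you want to keep your group-theoretic route, you would need to supply an actual argument (not an expected-dimension count) that the $G$-translates of $\Lambda_C$ contained in the fixed $g_1$-plane $\Lambda_S$ are finite in number, for \emph{every} $\Lambda_S$ containing $\Lambda_C$ that cuts a smooth $K3$; as it stands this step is missing.
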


\medskip
As in Proposition~\ref{p:mukai-univ}, the generality condition on $C$
may be replaced by the condition that $C$ is a smooth linear section
of the Mukai variety $M_{g_1}$, see Corollary~\ref{c:mukai-univ+}.

\begin{proof}
By Proposition~\ref{p:mukai-univ} there exists a 
universal family of surface extensions of $C$ defined over
$\P\bigl( \ker(\trsp \Phi_C) \bigr)$
and a rational map
\(
  s: \P\bigl( \ker(\trsp \Phi_C) \bigr)
  \dashrightarrow
  \Kcan_{g_1}
\),
which sends a (non-trivial) ribbon over $C$ to the modulus of its
unique $K3$ integral.
We may thus apply \cite[Prop.~8.4]{cds} and conclude that
$\restr s U$ is finite, with $U$ the dense open subset on which $s$ is
well-defined. In particular the ribbons $2C'_S$ of the various copies
$C'$ of $C$ in $|\O_S(C)|$ are only finitely many
(in other words: the first infinitesimal
neighbourhoods of $C'$ in $S$ fall into finitely many isomorphism
classes).

On the other hand, arguing exactly as in
\cite[Cor.~8.6]{cds} we conclude 
that for all ribbon $C_e$ over $C$ the copies $C'$ of $C$ in
$|\O_S(C)|$ such that $2C'_S=C_e$ are finitely many, and this ends the
proof. 

Before we close this proof, we emphasize that we have all the
necessary assumptions for the arguments of \cite[Prop.~8.4]{cds}
and \cite[Cor.~8.6]{cds} to apply without any single change.
There the assumption that ``$g \geq 11$ and
$\Cliff(C)>2$'' is made only to ensure that every ribbon over $C$ is
integrable to a unique surface, and this in the present situation is
granted by Proposition~\ref{p:mukai-univ}.
\end{proof}

\begin{remark}
In fact, if $S$ is general then $C$ and $C'$ have the same ribbon in
$S$ only if they are the same curve.
Indeed if $C$ and $C'$ have the same ribbon, then by
Proposition~\ref{pr:unicity-integral} there exists a projective
automorphism of $S$ mapping $C$ to $C'$.
By \cite[Thm.~1]{dm} the automorphism group of $S$ is trivial, hence
$C=C'$. 
\end{remark}


\begin{corollary}
Let $(S,L)$ be a general primitively polarized $K3$ surface of genus
$g_1=7,8,9,10$. Then for general $C \in |L|$, there are only finitely
many members $C' \in |L|$ such that $C$ and $C'$ are isomorphic.
\end{corollary}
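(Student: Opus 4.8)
The statement is essentially a rephrasing of Proposition~\ref{p:maxvar} in the language of the moduli stack of primitively polarized $K3$ surfaces, and the plan is to reduce it to that proposition. First I would observe that for a general primitively polarized $(S,L) \in \Kprim_{g_1}$ with $g_1 \in \{7,8,9,10\}$ and a general $C \in |L|$, the curve $C$ is a smooth curve linear section of the Mukai variety $M_{g_1}$. For $g_1 = 7,8,9$ this follows because the map $c_{g_1}^1$ is dominant (by \cite[Thm.~6.1]{mukai1}, cited in the proof of \ref{sp:all-integrable}), so a general $C \in |L|$ for general $(S,L)$ is a general curve of genus $g_1$, and such a curve is a linear section of $M_{g_1}$ by Mukai's characterization recalled in the proof of Lemma~\ref{l:mukai-cliff}. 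For $g_1 = 10$, the image of $c_{10}^1$ is a divisor in $\M_{10}$ (by \cite{cukierman-ulmer}), and a general $C \in |L|$ for general $(S,L)$ is general among curves of genus $10$ lying on a $K3$ surface; again this is precisely the generality hypothesis under which Corollary~\ref{c:mukai-univ+} and Proposition~\ref{p:maxvar} apply.

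Having checked the generality, I would simply invoke Proposition~\ref{p:maxvar} (together with the remark following Corollary~\ref{c:mukai-univ+} that the generality condition there is equivalent to $C$ being a smooth linear section of $M_{g_1}$): it states that for a polarized $K3$ surface $S$ having such a $C$ as a hyperplane section, there are only finitely many members $C' \in |\O_S(C)|$ isomorphic to $C$. Since $|L| = |\O_S(C)|$, this is exactly the assertion to be proved.

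The only point requiring a little care is the interplay of generality conditions: Proposition~\ref{p:maxvar} takes as input a general curve $C$ together with \emph{a} $K3$ surface $S$ containing it, whereas here one starts from a general pair $(S,L)$ and then a general $C \in |L|$. So the step deserving attention is the verification that "general $C$ in $|L|$ for general $(S,L)$" is at least as general as "general $C$ of genus $g_1$ (resp.\ general among those on a $K3$)"; this is where the dominance of $c_{g_1}^1$ (resp.\ the divisoriality of $\im(c_{10}^1)$) is used, and it is the only genuine content beyond citing Proposition~\ref{p:maxvar}. I do not expect any serious obstacle here; the corollary is a direct translation.
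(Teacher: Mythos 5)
Your proposal is correct and follows exactly the paper's route: the paper's proof is the one-line statement that ``the generality assumptions ensure that $C$ is liable to Proposition~\ref{p:maxvar}'', which is precisely your reduction. The extra care you take in matching the two notions of generality (via dominance of $c_{g_1}^1$ for $g_1\leq 9$ and divisoriality of $\im(c_{10}^1)$) is the content the paper leaves implicit, and it is carried out correctly.
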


\begin{proof}
The generality assumptions ensure that $C$ is liable to
Proposition~\ref{p:maxvar}.
\end{proof}

\paragraph{Problem}
If $C$ is a general curve of genus $g_1 \leq 6$, then it is not true
that the integral of a ribbon over $C$ is unique, so
both the
arguments given in \cite{cds} to prove
\cite[Prop.~8.4]{cds} and \cite[Cor.~8.6]{cds} break down, and it is
not clear 
to us whether Proposition~\ref{p:maxvar} holds in this case.


\section{Theta-characteristics, spin curves, etc.}
\label{S:spin}

In this section we will discuss some properties of $\S_g^{\frac 1 k, g_1}$, of $\T_g^{\frac 1 k, g_1}$ and of $\im(c _g ^k)$. 

\paragraph{Sextic double planes}\label{par:6ic} First we consider $K3$ surfaces which are double cover of the projective plane branched along a smooth sextic curve. In relation with these surfaces we can consider $\K_5^2$. The image of $c_5^2$ is the hyperelliptic locus in $\M_5$,
and this equals $\T _5 ^{\frac 1 2, 2}$. Indeed if $(C,L) \in \S _5
^{\frac 1 2, 2}$, $|L|$ gives a special $g^2_4$ on $C$ and by Clifford's Theorem $C$ is hyperelliptic.  

The image of $c_{10}^3$ is  the locus of plane sextics in $\M_{10}$, which
coincides with $\T _{10} ^{\frac 1 3, 2}$. In fact a cubic root $\theta$ of the canonical bundle $K$ with $h^0(\theta)=3$ is such that $|\theta|=g^2_6$.

For $k \geq 4$ it is more complicated to understand $\S_{1+k^2}^{\frac 1 k, 2}$, $\T_{1+k^2}^{\frac 1 k, 2}$ and whether the equality holds in \eqref {eq:r-spin-img}. 

\paragraph{Complete intersections}
\label{p:spin-CI}
Next we consider the cases $g_1=3,4,5$, in which the general polarised
$K3$ surface $(S,L_1)$ with $L_1$ a primitive line bundle with
$(L_1)^2=2g_1-2$ is a complete intersection. In this case we face the
following question.

\medskip
\subparagraph{Question}
\label{q:Sg} Let $g_1=3,4,5$.
Is it true that for any $k\geq 2$ the moduli space $\S_g ^{\frac 1 k, g_1}$ is irreducible and that the general member of \smash{$\S_g ^{\frac 1 k, g_1}$}
is a complete 
intersection of type $(4,k)$ in $\P^3$ if $g_1=3$, of type 
$(2,3,k)$ in $\P^4$ if $g_1=4$, and of type $(2,2,2,k)$ in $\P^5$ if $g_1=5$?

\medskip
It is worth noticing that for $k=2$ we have 
\begin{center}
\begin{tabular}{r|lll}
$g_1$ & 3 & 4 & 5 \\
\hline
$\dim (\KC_g ^2)-\expdim(\T _g ^{\frac 1 2, g_1})$
& $10$ & $6$ & $3$
\\
dim(fibre of $c_g^2$)
& $10$ & $6$ & $3$
\end{tabular}
\end{center}
where the values on the first line are computed with
\eqref{expdim-spin},
and those on the second line are obtained as in Section~\ref{S:CI} for $g_1=3$.
\medskip

In the rest of this section we discuss Question~\ref{q:Sg}. First we give a definition. We say that an irreducible component $\S$ of
  $\S _g ^{\frac 1 k, g_1}$
  is \emph{birational} if for $(C,\theta) \in \S$ general the linear
  series $|\theta|$ determines a birational map.
There are three levels of problems related to  Question~\ref{q:Sg}:\\
\begin{inparaenum}
\item [(a)] prove that $\S _g ^{\frac 1 k, g_1}$ is irreducible, and its general
element is a complete intersection of type $(4,k)$ in $\P^3$ if $g_1=3$, of type 
$(2,3,k)$ in $\P^4$ if $g_1=4$, and of type $(2,2,2,k)$ in $\P^5$ if $g_1=5$;\\
\item [(b)]  prove that $\S _g ^{\frac 1 k, g_1}$ has only one birational
irreducible component and its general element is a complete
intersection of type $(4,k)$ in $\P^3$ if $g_1=3$, of type 
$(2,3,k)$ in $\P^4$ if $g_1=4$, and of type $(2,2,2,k)$ in $\P^5$ if $g_1=5$;\\
\item [(c)]  prove that the closure of the family of complete intersections of type $(4,k)$ in $\P^3$ if $g_1=3$, of type 
$(2,3,k)$ in $\P^4$ if $g_1=4$, and of type $(2,2,2,k)$ in $\P^5$ if $g_1=5$, is an irreducible component of $\S _g ^{\frac 1 k, g_1}$.
\end{inparaenum}

\medskip
A  justification
for restricting our attention to
the birational components is that there are in general
several non-birational components, and we do not care entering the
corresponding botanic. 
We provide examples in
\ref{p:nonbirat.hyperell},
\ref{p:nonbirat.biell} and
\ref{p:nonbirat.g2}
below.

Also, one could try to characterize the birational components by some
Brill--Noether theoretic property. We will see examples of this in Propositions \ref {prop:g1=5}, \ref  {prop:no}, \ref {prop:bir} and \ref {prop:g_1=4,k=3}. 
\medskip

We now answer affirmatively to problem (c) above:

\begin{proposition}\label{prop:compint} For $3\leq g_1\leq 5$ and for any integer $k\geq 2$ the closure of the family of complete intersections of type $(4,k)$ in $\P^3$ if $g_1=3$, of type 
$(2,3,k)$ in $\P^4$ if $g_1=4$, and of type $(2,2,2,k)$ in $\P^5$ if $g_1=5$, is an irreducible component of $\S _g ^{\frac 1 k, g_1}$. 
\end{proposition}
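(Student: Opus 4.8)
The strategy is the standard dimension-count argument for showing that a family $\mathcal F$ of spin curves obtained from a geometric construction is an irreducible component of $\S_g^{\frac1k,g_1}$: exhibit $\mathcal F$ as an irreducible locally closed subset of $\S_g^{\frac1k,g_1}$, compute $\dim\mathcal F$, and show this equals the dimension of $\S_g^{\frac1k,g_1}$ at a general point of $\mathcal F$. For the latter it suffices to bound $\dim_{(C,\theta)}\S_g^{\frac1k,g_1}$ from above by $\dim\mathcal F$, since the reverse inequality is automatic once we know $\mathcal F\subset\S_g^{\frac1k,g_1}$.

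First I would set up $\mathcal F$ precisely. For $g_1=3$ (the others are entirely analogous, as the paper notes), $\mathcal F$ is the image in $\S_g^{\frac1k,g_1}$ of the family of smooth complete intersections $C$ of type $(4,k)$ in $\P^3$, equipped with the spin structure $\theta=\restr{\O_{\P^3}(1)}{C}$, which satisfies $k\theta=\restr{\O_{\P^3}(k)}{C}=\restr{\O_S(1)}{C}=K_C$ for the quartic surface $S$ cut out by a quartic through $C$ (compare \ref{par:fibre} and \ref{p:g3.univ}). This family is irreducible (it is dominated by an open subset of a projective bundle of pairs of forms modulo $\PGL(4)$), so $\mathcal F$ is irreducible; and it is contained in $\S_g^{\frac1k,g_1}$ because $h^0(\theta)=h^0(\O_{\P^3}(1))=4=g_1+1$ since $C$ is non-degenerate and $h^1(\I_C(1))=0$. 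Its dimension is computed exactly as in \ref{par:fibre}: it is $\dim\IC(4,k)=\dim|\O_{\P^3}(4)|+\dim|\O_{\P^3}(k)|-\dim\PGL(4)$ (all these forms have no common factor generically so the group acts with finite stabilizers by Theorem~\ref{p:stab-ic}), to which one need not add anything since the spin structure on a general member is unique (or rather finite — see the footnote in \ref{par:fibre}).

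The heart of the argument is the upper bound on $\dim_{(C,\theta)}\S_g^{\frac1k,g_1}$. Deformation theory gives that the tangent space to $\S_g^{\frac1k,g_1}$ at $(C,\theta)$ sits in the exact sequence coming from the forgetful map to $\M_g$: infinitesimal deformations of $(C,\theta)$ preserving $h^0(\theta)\geq g_1+1$ are controlled by the differential of the Petri-type map, so the codimension of $\S_g^{\frac1k,g_1}$ in $\S_g^{\frac1k}$ at $(C,\theta)$ is at least $h^0(\theta)\cdot h^0(K_C-\theta)-h^0(K_C)-$(something), or more precisely one uses that $\dim\S_g^{\frac1k}=3g-3$ and bounds the codimension of the Brill--Noether-type locus $\{h^0(\theta)\geq g_1+1\}$ from below. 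Concretely, I would compute $\dim\S_g^{\frac1k,g_1}$ at $(C,\theta)$ via $h^0(C,N)$ where the relevant normal-type sheaf encodes deformations of $C\subset\P^{g_1}$ together with the embedding line bundle $\O(1)$; since for a general complete intersection $N_{C/\P^{g_1}}$ is well understood, one gets $\dim_{(C,\theta)}\S_g^{\frac1k,g_1}\leq h^0(N_{C/\P^{g_1}})+\dim\PGL(g_1+1)-\dim\Aut(C,\O(1))$ — which, after the routine cohomology of $N_{C/\P^{g_1}}$ for a complete intersection, is exactly $\dim\mathcal F$. Since the tangent space bound and the dimension of $\mathcal F$ agree, $\mathcal F$ is a component.

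**Main obstacle.** The delicate point is the upper bound $\dim_{(C,\theta)}\S_g^{\frac1k,g_1}\leq\dim\mathcal F$: one must show that \emph{every} first-order deformation of $(C,\theta)$ keeping $h^0(\theta)\geq g_1+1$ comes from deforming $C$ inside $\P^{g_1}=\P H^0(\theta)^\vee$ as a complete intersection of the same type — equivalently, that the deformations of $C$ as an abstract curve together with its $g^{g_1}_{2g_1-2+\cdots}$ given by $|\theta|$ are unobstructed and cut out precisely by the normal bundle of the complete intersection. This requires knowing that $|\theta|$ stays very ample (or at least birational) and that $C$ stays projectively normal and ideal-theoretically a complete intersection under deformation — which is where the complete-intersection hypothesis ($g_1\leq 5$) is essential and which would fail in the Mukai range. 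I expect the cleanest route is to identify the germ of $\S_g^{\frac1k,g_1}$ at $(C,\theta)$ with the germ of $\IC(4,k)/\PGL(4)$ (resp. the analogous quotients) via the map $C\mapsto(C,\restr{\O(1)}{C})$, using that the latter is étale onto its image near a general point — the injectivity being Theorem~\ref{p:moduli-ic} and the smoothness/dimension matching being the cohomological computation above.
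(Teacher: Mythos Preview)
Your approach is genuinely different from the paper's, and it contains a gap at exactly the point you flag as the main obstacle.

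The paper does \emph{not} compute dimensions or tangent spaces at all. Instead it argues by openness: take an irreducible component $\S$ of $\S_g^{\frac 1 k, g_1}$ containing the complete intersections, and let $(C,\theta)$ be a \emph{general} point of $\S$. Since $(C,\theta)$ specialises to a complete intersection, every open property of the latter is inherited by the former. In particular $|\theta|$ is very ample of dimension exactly $g_1$, and the image $C\subset\P^{g_1}$ is projectively normal. For $g_1=3$ one then invokes Gherardelli's theorem (a projectively normal curve in $\P^3$ with $K_C=\O_C(k)$ is a complete intersection) and is done. For $g_1=4,5$ a short double-semicontinuity argument shows $h^0(\I_C(h))=h^0(\I_{C'}(h))$ for all $h$ (where $C'$ is an honest complete intersection), forcing $C$ to be a complete intersection of the same type. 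No normal bundle, no Petri map, no tangent-space bound.

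Your strategy, by contrast, is to bound $\dim_{(C,\theta)}\S_g^{\frac 1 k,g_1}$ from above by $\dim\mathcal F$. The problem is that your proposed bound $\dim_{(C,\theta)}\S_g^{\frac 1 k,g_1}\leq h^0(N_{C/\P^{g_1}})+\dim\PGL(g_1+1)-\dim\Aut(C,\O(1))$ is not justified (and the sign on $\dim\PGL$ is wrong). The normal bundle $N_{C/\P^{g_1}}$ governs deformations of $C$ \emph{as a subscheme of} $\P^{g_1}$, not deformations of the pair $(C,\theta)$ inside $\S_g^{\frac 1 k,g_1}$. To pass from the latter to the former you need to know that a nearby $(C',\theta')\in\S_g^{\frac 1 k,g_1}$ still has $|\theta'|$ very ample of dimension $g_1$ --- which is fine, this is open --- but then the image of the resulting map to the Hilbert scheme modulo $\PGL$ lands only in the sublocus where $K_{C'}=\O_{C'}(k)$, which is \emph{not} a priori the complete-intersection locus. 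Bounding the dimension of that sublocus by $\dim\IC(4,k)$ is precisely what needs to be proven. Your proposed fix, ``identify the germ of $\S_g^{\frac 1 k,g_1}$ with the germ of $\IC(4,k)/\PGL(4)$'', restates the goal rather than proving it; the ``cohomological computation'' you allude to is never specified.

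The clean way out of this circularity is exactly the paper's: show (by openness of projective normality) that nearby $C'\subset\P^{g_1}$ with $K_{C'}=\O_{C'}(k)$ are still projectively normal, and then apply Gherardelli (or its substitute for $g_1=4,5$) to conclude they are complete intersections. Once you see this, the dimension count becomes superfluous --- you have shown directly that the complete-intersection locus is open in $\S_g^{\frac 1 k,g_1}$, hence its closure is a component.
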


\begin{proof} Let $\S$ be an irreducible component of $\S _g ^{\frac 1 k, g_1}$ containing the family of complete intersections of type $(4,k)$ in $\P^3$ if $g_1=3$, of type 
$(2,3,k)$ in $\P^4$ if $g_1=4$, and of type $(2,2,2,k)$ in $\P^5$ if $g_1=5$. We have to prove that if $(C,\theta)\in \S$ is general, it corresponds to a complete intersection of the same type in $\P^{g_1}$. Note that the linear series $|\theta|$ has dimension $g_1$, it is very ample, and maps $C$ to a smooth curve in $\P^{g_1}$. Since a complete intersection is projectively normal, then also $C$ is projectively normal in $\P^{g_1}$. Moreover $k\theta=K_C$. By Gherardelli's Theorem (see \cite[p. 147]{acgh}), this settles the case $g_1=3$.

Next we treat the case $g_1=4$. Let $C'$ be a smooth complete
intersection of type $(2,3,k)$ in $\P^4$. For any positive integer $h$
we have, by semicontinuity and by projective normality,
\[
h^0(\mathcal O_{\P^4}(h))-h^0(\mathcal I_C(h))=h^0(\mathcal O_C(h))\leq  h^0(\mathcal O_{C'}(h))=h^0(\mathcal O_{\P^4}(h))-h^0(\mathcal I_{C'}(h))
\]
hence $h^0(\mathcal I_C(h))\geq h^0(\mathcal I_{C'}(h))$. On the other
hand, by semicontinuity, we have $h^0(\mathcal I_C(h))\leq
h^0(\mathcal I_{C'}(h))$ so that equality holds. This immediately
implies that $C$, as well as $C'$, is a complete intersection of type
$(2,3,k)$.

The case $g_1=5$ is similar and we mostly leave it to the reader. In
this case we find a net of quadrics containing $C$. The base locus of
this net is a complete intersection surface of type $(2^3)$ because it
so happens for complete intersection curves, of which $C$ is a
deformation. 
\end{proof}

\bigskip
It is useful, in order to compute the number of moduli of non-birational components of $\S_g^{\frac 1k,g_1}$, to record the following well known lemma.

\begin{lemma}
\label{l:dim-loci}
The following loci in $\M_g$ have the indicated dimensions:
\begin{mycpctitem}
\item $k$-gonal locus $\M_{g,k}^1$: $2g+2k-5$;
\item locus of $k$-elliptic curves, i.e., $k:1$ covers of elliptic curves: $2g-2$;
\item $k:1$ covers of a genus $h>1$ curve: $2g-(2k-3)h+2k-5$.
\end{mycpctitem}
\end{lemma}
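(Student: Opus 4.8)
The three dimension counts are classical parameter counts, so the plan is to treat each family as fibered over its "base" datum (a linear system / a target curve) and add up the moduli of the base with the dimensions of the varying data, taking care of automorphisms. I will present each item separately.

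\textit{$k$-gonal locus.} A general $k$-gonal curve of genus $g$ carries a base-point-free $g^1_k$, i.e.\ a degree-$k$ map $f:C\to\P^1$, unique for general such $C$ once $k<(g+2)/2$ (the gonality is then the minimal degree). I would count the simply branched covers $C\to\P^1$ of degree $k$: by Riemann--Hurwitz the number of branch points is $b=2g+2k-2$, the branch divisor moves in a space of dimension $b=2g+2k-2$, and Riemann's existence theorem gives finitely many covers for each sufficiently general branch divisor (a monodromy count). The group $\PGL(2)$ acting on the target $\P^1$ has dimension $3$, and for general data this action on the branch locus is free. Hence the $k$-gonal locus has dimension $2g+2k-2-3=2g+2k-5$, as claimed; one must also check that distinct branch divisors (mod $\PGL(2)$) give distinct points in $\M_g$ for general members, which follows from uniqueness of the $g^1_k$.

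\textit{$k$-elliptic curves.} Here $C$ admits a degree-$k$ map $f:C\to E$ onto an elliptic curve; for general such $C$ this $E$ and $f$ are essentially unique (the relevant Brill--Noether count forces it for $g$ large relative to $k$). I would again count simply branched covers of $E$ of degree $k$: the number of branch points is $b=2g-2$ by Riemann--Hurwitz ($\chi(C)=k\chi(E)-b=-b$), the branch divisor varies in $\mathrm{Sym}^{b}(E)$ of dimension $b=2g-2$, and $E$ itself contributes $1$ modulus; but the automorphism group of $E$ (translations together with the finite automorphisms) is $1$-dimensional and acts on the pair $(E,\text{branch divisor})$, so it cuts the count back down. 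Net dimension: $1+(2g-2)+(\text{finite data})-1=2g-2$.

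\textit{$k:1$ covers of a fixed genus $h>1$ curve.} Same recipe: $f:C\to Y$ with $Y$ of genus $h>1$; by Riemann--Hurwitz the branch divisor on $Y$ has degree $b$ with $2g-2=k(2h-2)+b$, i.e.\ $b=2g-2-k(2h-2)=2g-2k h+2k-2$, and it varies in $\mathrm{Sym}^{b}(Y)$, dimension $b$. The moduli of $Y$ itself contribute $3h-3$, and since $h>1$ the automorphism group of $Y$ is finite, so there is no correction. Total: $(3h-3)+(2g-2k h+2k-2)=2g-(2k-3)h+2k-5$, matching the statement.

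\textbf{Main obstacle.} The genuinely delicate point in each case is \emph{genericity and uniqueness}: one must know that for a general member of the locus the auxiliary structure ($g^1_k$, elliptic quotient, or genus-$h$ quotient) is essentially unique, so that the parameter count for covers actually computes the dimension of the image in $\M_g$ rather than something larger, and that the $\mathrm{Aut}$-action used is free on a dense open set (trivial only in the $h>1$ case, where $\mathrm{Aut}(Y)$ is finite). For $Y$ of genus $\geq 2$ this is straightforward; for $\P^1$ and for $E$ it rests on the standard fact that a general cover has no extra deformations of its branch data beyond the obvious group action, which is where I would cite the literature rather than reprove it — indeed the statement says this lemma is "well known", so I would simply refer to a standard source (e.g.\ the relevant chapters of \cite{acgh} and the theory of Hurwitz spaces) and content myself with the Riemann--Hurwitz bookkeeping above.
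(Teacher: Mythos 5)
Your proposal is correct and follows essentially the same route as the paper's proof: a Riemann--Hurwitz count of branch points, corrected by the dimension of the automorphism group of the target ($3$ for $\P^1$, $1$ for an elliptic curve, $0$ for genus $h>1$) and augmented by the moduli of the target. The extra care you take about uniqueness of the auxiliary structure is a reasonable supplement but does not change the argument.
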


\begin{proof}
A $k:1$ cover of genus $g$ of $\P^1$ amounts to the datum of 
$2g-2+2k$ branch points, and these have $2g-2+2k-3$ moduli.

A $k:1$ cover of genus $g$ of an elliptic curve $E$ amounts to the
datum of $2g-2$ branch points, and these have $2g-2-1$ moduli. There
is one additional modulus for the choice of $E$.

A $k:1$ cover of genus $g$ of a genus $h$ curve $C$ amounts to the
datum of $2g-2-k(2h-2)$ branch points, and each points gives a
modulus. To this we have to add the dimension $3h-3$ of $\M_h$.
\end{proof}

\paragraph{Hyperelliptic curves}
\label{p:nonbirat.hyperell}
Let $(C,\fl)$ be a {genus $g$ hyperelliptic curve}. By this we mean that
$\fl$ is the $g^1_2$ on $C$. Then $K_C=(g-1)\fl$. If
\(
  g \congru 1 \mod r,
\)
then $\theta = \frac {g-1}r \fl$ is such that $r\theta=K_C$, and
one has 
$h^0(\theta) = \frac {g-1}r+1$.
We thus get a locus
\[
  \mathcal H_{g,\frac {g-1}r}
  \subset
  \S _g ^{\frac 1r, \frac {g-1}r}
\]
of dimension $2g-1$.
By Clifford's Theorem, all points in $\mathcal H_{g,\frac {g-1}r}$
correspond to hyperelliptic curves.  

More generally,
assume there exist non-negative integers $a,b,h$ such that
$b(2a+h)=g-1$, and $h \leq 2g+2$.
Then we can choose $h$ distinct Weierstrass points
$p_1,\ldots,p_h$ of $C$, and consider the linear series
$\theta=a\fl+p_1+\cdots+p_h$. Then $2b\theta =K_C$ and
$h^0(\theta)=a+1$.  In this way we obtain a locus
\[
  \mathcal H_{g,a}^h
  \subset
  \S _g ^{\frac 1{2b}, a}
  = \S _g ^{\frac {2a+h}{2g-2}, a}
\]
of dimension $2g-1$.

\paragraph{Bielliptic curves}
\label{p:nonbirat.biell} 
Let us fix an integer $k\geq 2$ and an integer $g_1\geq 1+\frac
2{k-1}$. Set, as usual, $g=1+k^2(g_1-1)$.  A bielliptic canonical
curve $C$ of genus $g$ in $\P^{g-1}$ sits on a cone $S$ with vertex a
point $p$ over an elliptic normal curve $E$ of degree $g-1$ spanning a
hyperplane of $\P^{g-1}$, and it is cut out on $S$ by a quadric
hypersurface not passing through $p$. The bielliptic involution
$\gamma^1_2$ is cut out on $C$ by the rulings of $S$. The linear
series $g_{2g-2}^{g-2}$ cut out on $C$ by the hyperplanes through $p$
is composed with the bielliptic involution $\gamma^1_2$, being the
pull-back on $C$ of the complete hyperplane series $g^{g-2}_{g-1}$ on
$E$. The latter series is certainly divisible by $k$, any of its
$k$-tuple
divisors is a $g_{k(g_1-1)}^{k(g_1-1)-1}$ on $E$, its pull-back
$\theta$ on $C$ is such that $(C,\theta)\in \mathcal
S_g^{\frac 1k, k(g_1-1)-1}$, and $k(g_1-1)-1\geq g_1$
by the hypothesis $g_1\geq 1+\frac 2{k-1}$. In this way we get a locus
\[
  \mathcal E_{g,k}
  \subset
  \mathcal S_g^{\frac 1k, k(g_1-1)-1}
\]
of dimension $2g-2$.
As we will see later in the case $k=2$,
$\mathcal E_{g,k}$ sometimes is a component of $\mathcal
S_g^{\frac 1k, k(g_1-1)-1}$, and sometimes not.

\paragraph{Double covers of genus $2$ curves}
\label{p:nonbirat.g2}

 Again we fix integers $k\geq 2$ and $g_1\geq 1+\frac 3{k-1}$ and set
$g=1+k^2(g_1-1)$. Fix in $\P^g$ a line $L$ and a subspace $\Pi$ of
dimension $g-3$ skew with $L$. Consider in $\Pi$ a linearly normal,
smooth genus two curve $\Gamma$ of degree $g-1$. Consider the $g^1_2$
on $\Gamma$ as a $2:1$ morphism $f:\Gamma\to L$. For each point $p\in
\Gamma$ consider the line $\langle p,f(p)\rangle$, and take the union
$S$ of all such lines when $p$ varies in $\Gamma$. Then $S$ is a
scroll of genus $2$ and degree $g+1$, having $L$ as a double directix
line. Take the intersection curve $C$ of $S$ with a quadric
intersecting $L$ in two general points $q_1,q_2$, off the four rulings
issuing from $q_1,q_2$. The curve $C$ is smooth, does not intersect
$L$, and it is easy to see that it has genus $g$, hence it is a
canonical curve with a $2:1$ morphism onto the genus 2 curve $\Gamma$,
and the genus 2 involution $\gamma^1_2$ is cut out on $C$ by the
rulings of $S$. The base point free linear series $g_{2g-2}^{g-3}$ cut
out on $C$ by the hyperplanes containing $L$ is composed with the genus 2
involution $\gamma^1_2$, being the pull-back on $C$ of the complete
hyperplane series $g^{g-3}_{g-1}$ on $\Gamma$. This series is
certainly divisible by $k$, any of its $k$-tuple divisors is a
$g_{k(g_1-1)}^{k(g_1-1)-2}$ on $\Gamma$, its pull back $\theta$ on $C$
is such that $(C,\theta)\in \mathcal S_g^{\frac 1k,
k(g_1-1)-2}$, and $k(g_1-1)-2\geq g_1$. In this way we get a locus
\[
  \mathcal D_{g,k}
  \subset
  \mathcal S_g^{\frac 1k, k(g_1-1)-2}
\]
of dimension $2g-3$. 

\paragraph{Curves on quadrics in $\P^3$}
\label{p:g3.quadric} 

Fix an integer $k\geq 3$. 
We shall now consider irreducible curves of degree $4k$ and geometric
genus $1+2k^2$ that lie on a
quadric in $\P^3$, and study the possibility that the pull-back of the
hyperplane bundle is a $k$-th root of the canonical bundle on the
normalization.

We look at the case $k=3$ and we consider irreducible curves of type $(6,6)$ on a smooth quadric  $S=\P^1 \times \P^1$.  These curves must have exactly 6 nodes, or equivalent singularities, to have  genus $19$. Let us assume they have only nodes. For such a curve $\Gamma$ the adjoint system 
$K_S+\Gamma$ has bidegree $(4,4)$, so we find a $3$-rd root of the canonical bundle on the
normalization $C$ when the $6$ nodes lie on a plane conic section $D$
of $S$, and are the complete intersection of $\Gamma$ and $D$. Let us
check that the curves in question exist indeed.

Fix an irreducible  conic $D$ on $S$ and $6$ general points $p_1,\ldots, p_6$ on it. Consider the linear system $\mathcal L$ of curves of type $(6,6)$ singular at $p_1,\ldots, p_6$. One has
\[
\dim(\mathcal L)\geq 48-3\cdot 6=30.
\]
Note  that $\mathcal L$ contains the subsystem $\mathcal L'$ consisting of the curves containing $D$ as a fixed component, with variable part consisting of curves of type $(5,5)$ containing $p_1,\ldots, p_6$. One has
\[
\dim (\mathcal L')=35-6=29
\]
(the points $p_1,\ldots, p_6$ clearly impose independent conditions to curves of type $(5,5)$) and the general curve in $\mathcal L'$ has nodes at $p_1,\ldots, p_6$. Since $\mathcal L'$ is strictly contained in $\mathcal L$, we see that the general curve in $\mathcal L$ is irreducible, has nodes at $p_1,\ldots, p_6$ and no other singularity, so it is of the required type. Note that in fact $\dim(\mathcal L)=30$. Indeed, with only one condition $D$ splits off the curves of $\mathcal L$ and the residual system is just $\mathcal L'$, which therefore has codimension 1 in $\mathcal L$. 

This construction gives us a locus $\S$ inside $\S_{19}^{\frac 13,3}$. Let us compute its dimension. The choice of the plane section $D$ of $S$ depends on 3 parameters. The choice of $p_1,\ldots, p_6$ on $D$ depends on 6 parameters. The linear system $\mathcal L$ has dimension 30. The automorphisms of $\P^1\times \P^1$ have $6$ dimensions. In conclusion we find
$\dim(\S)=33$. Note that one can make the same construction on a quadric cone, thus getting curves in the closure of $\S$. 

It is interesting to compare the dimension of $\S$ with the dimension of the image of $c_{19}^3$, which is $34$ (see Section \ref {S:CI}). Since $\dim(\S)<\dim(\im (c_{19}^3))$, 
this suggests the following conjecture.

\medskip
\subparagraph{Conjecture}
\label{conj:gen} 
The locus $\S$ is contained in  the closure of $\im (c_{19}^3)$.
\medskip

It would be tempting to see the above example as a particular case of a more general situation. Namely, for every integer $k\geq 3$ one would like to consider irreducible  curves of type $(2k,2k)$ on $\P^1 \times
\P^1$, with exactly $2k(k-2)$ nodes, so that their genus is
$1+2k^2$. For such a curve $\Gamma$, in order to have a $k$-th root
of the canonical bundle on the normalization $C$, the $2k(k-2)$ nodes
should lie on a curve $D$ of type $(k-2,k-2)$, and should be the
complete of intersection $\Gamma$ and $D$. However,  as soon as $k\geq
4$, it is not at all clear that irreducible curves of the required
type exist indeed. 
It is also possible to consider similar examples on rational normal
scrolls in $\P^4$ and $\P^5$ respectively; these pose the same kind of
questions, which we don't answer in this text.

\subsection{Theta-characteristics (the case $k=2$)}
\label{s:theta-details}

The case of theta-characteristics is special in that we have an
expected dimension for
$\S _g ^{\frac 1 2, g_1}$ or equivalently for $\T _g ^{\frac 1 2, g_1}$.

\paragraph{$g_1=3$ and $k=2$}
\label{l:S_9^3}

Consider the locus $\mathcal H_{9,4}\subseteq \S _9 ^{\frac
12,4}\subseteq \S _9 ^{\frac 12,3}$ introduced in paragraph~\ref
{p:nonbirat.hyperell}. The elements in $\mathcal H_{9,4}$ are of the
type $(C,4\fl)$, where $(C,\fl)$ is a genus $9$ hyperelliptic
curve. One has $h^0(4\fl) = 5$ (odd),
hence $(C,4 \fl)$ cannot be a specialization of some $(C_1,\theta)\in \S _9
^{\frac 1 2, 3}$ with $h^0(\theta)=4$ (even) by
Theorem~\ref{t:harris} (the invariance of the parity is due to
Mumford \cite{mumford}). 
Note that $\dim(\mathcal
H_{9,4})=17$, and this agrees with Theorem~\ref{t:harris} because
\[
17=\dim(\mathcal H_{9,4})\geq \dim(\M_9) - \frac {4(4+1)} 2=14.
\]

Let $M$ be any irreducible component of $\S _9 ^{\frac 12,3}$ whose general element $(C,\theta)$ is such that $h^0(\theta)$ is even. By Clifford's Theorem, if $(C,\theta)$ is general in $M$, then $h^0(\theta)= 4$.  Moreover, by Theorem~\ref{t:harris} one has
\begin{equation}\label{eq:m}
\textstyle
\dim(M) \geq 
 \dim(\M_9) - \frac {3(3+1)} 2
= 18.
\end{equation}

Note that the locus $\mathcal H_{9,3}^2$ consisting of hyperelliptic curves (see 
paragraph \ref {p:nonbirat.hyperell}), does not fill up an irreducible
component of $\S _9 ^{\frac 12,3}$ because it has only dimension $17$
and it is not contained in $\mathcal H_{9,4}$ by the constancy of $h^0(\theta)$.

\begin{theorem}\label{thm:g3}
$\S _9 ^{\frac 1 2, 3}$ has two irreducible components, one
equal to $\mathcal H_{9,4}$ and the other whose general
element is
a complete intersection curve of type $(2,4)$ in $\P^3$ (see Proposition \ref {prop:compint}).
\end{theorem}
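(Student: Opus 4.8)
The plan is to show that $\S_9^{\frac12,3}$ has exactly the two irreducible components stated. First I would fix an arbitrary irreducible component $M$ of $\S_9^{\frac12,3}$ and split into two cases according to the parity of $h^0(\theta)$ for $(C,\theta)\in M$ general, which is a deformation invariant by Theorem~\ref{t:harris}. In the odd case, Clifford's theorem forces $h^0(\theta)$ to be as small as possible, hence $h^0(\theta)=3$ (the value $g_1=3$), and moreover $C$ must be hyperelliptic: indeed a genus $9$ curve carrying a $g^2_4$ is hyperelliptic by Clifford, so $\theta=2\fl$ in the notation of \ref{p:nonbirat.hyperell}; but then $3\theta=6\fl\neq K_C=8\fl$, a contradiction. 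Hence a more careful bookkeeping is needed: the correct conclusion is that the odd component is $\mathcal H_{9,4}\subseteq \S_9^{\frac12,4}$, on whose general point $(C,4\fl)$ one has $h^0(4\fl)=5$; one checks $\dim\mathcal H_{9,4}=17$ and that this is the unique component of odd type, by showing any odd component has general member with $h^0(\theta)\le 5$ (Clifford, since $\deg\theta=8$) and $h^0(\theta)$ odd forces $h^0=5$ and then $\theta=4\fl$ on a hyperelliptic curve, so the component is contained in $\mathcal H_{9,4}$, hence equal to it by dimension.

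Next, in the even case, the general element has $h^0(\theta)=4$ by Clifford, and Theorem~\ref{t:harris} gives $\dim M\ge \dim\M_9-\binom{4}{2}=18$ as in \eqref{eq:m}. The linear series $|\theta|$ then maps $C$ to $\P^3$, and I would first argue this map is birational onto its image for $(C,\theta)$ general in $M$: if $|\theta|$ were composed with an involution, $C$ would be hyperelliptic, bielliptic, or trigonal, and in each of these cases the corresponding locus in $\M_9$ is easily seen (using Lemma~\ref{l:dim-loci}) to be too small — e.g. the hyperelliptic locus has dimension $17<18$, and the constraint $3\theta = K_C$ pins down $\theta$ in the remaining cases to loci of dimension $<18$ — so no even component can be non-birational. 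Therefore $|\theta|$ embeds $C$ (or maps it birationally to a possibly singular model) in $\P^3$ as a curve of degree $8$ and genus $9$ with $2\cdot(\text{hyperplane class})=$ conormal data matching a $(2,4)$ complete intersection.

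Then I would invoke Gherardelli's theorem (see \cite[p.~147]{acgh}): a projectively normal curve $C\subset\P^3$ whose canonical bundle is a multiple of $\O_C(1)$ is a complete intersection; here $3\theta=K_C$ but the relevant statement is that $K_C=\O_C(2)$ after one checks $C$ lies on a quadric, making it a complete intersection of type $(2,4)$. Concretely: a nondegenerate degree-$8$ genus-$9$ curve in $\P^3$ lies on a quadric (count $h^0(\I_C(2))\ge h_3(2)-h^0(\O_C(2))= 10 - 9>0$, using that $h^0(\O_C(2))=2\deg-g+1=8$ when nonspecial — need to double check speciality, but $2\theta$ has degree $16>2g-2=16$, borderline, so $h^0(\O_C(2))=h^0(2\theta)$ and by Riemann–Roch $=9+h^0(K_C-2\theta)=9+h^0(\theta)=13$... so actually $h^0(\I_C(2))\ge 10-13<0$, meaning one must instead argue via the quadric differently) — this speciality computation is exactly the delicate point. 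I would resolve it by the semicontinuity argument of Proposition~\ref{prop:compint}: since $M$ has dimension $\ge 18$ and the $(2,4)$ complete intersection locus is an irreducible component of $\S_9^{\frac12,3}$ of the expected dimension $18$ (by Proposition~\ref{prop:compint} together with the count $\dim\KC_9^2-\binom{4}{2}=\dim\M_9$ minus codimension, giving $18$), and the general $(2,4)$ complete intersection has $h^0(\theta)=4$ even, the even-type component must coincide with it. Combining the two cases gives exactly two components, completing the proof.

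\textbf{Main obstacle.} The hard part is the even case: ruling out a priori that there exists an even-type component other than the $(2,4)$-complete-intersection one, in particular controlling curves whose $\theta$ gives a birational but non-isomorphic map to $\P^3$ (singular models), and correctly handling the speciality of $\O_C(2)=2\theta$ in the projective-normality/Gherardelli step. I expect the cleanest route is to lean on Proposition~\ref{prop:compint} for the dimension of the complete-intersection component and on Theorem~\ref{t:harris} for the lower bound on $\dim M$, reducing everything to a comparison of dimensions once the non-birational and odd alternatives are excluded.
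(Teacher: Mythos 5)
Your overall architecture coincides with the paper's: split the components by the parity of $h^0(\theta)$ (invariant by Theorem~\ref{t:harris}), identify the odd ones with $\mathcal H_{9,4}$ via Clifford's theorem, and in the even case use the bound $\dim M\geq 18$ of \eqref{eq:m} to kill the hyperelliptic and bielliptic alternatives by moduli counts. The problem is the final step, where your argument has a genuine gap. Your fallback --- ``$\dim M\geq 18$ and the $(2,4)$ complete intersection locus is a component of dimension $18$, hence $M$ coincides with it'' --- is a non sequitur: Proposition~\ref{prop:compint} only asserts that the complete intersection locus \emph{is} a component, not that it is the \emph{only} even one, and nothing a priori excludes a second even component of dimension $\geq 18$. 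What is needed, and what the paper supplies, is a direct proof that for $(C,\theta)$ general in an arbitrary even component the image of $|\theta|$ is a $(2,4)$ complete intersection. The paper does this by observing that the birational image $\Gamma\subset\P^3$ has degree $8$ and geometric genus $9$, which is the Castelnuovo bound for that degree; hence $\Gamma$ is a Castelnuovo curve, and such curves are complete intersections of type $(2,4)$. (One must also first dispose of base points: Clifford gives $b<2$, and $b=1$ is excluded because a birational $g^3_7$ would produce a degree $7$ curve in $\P^3$ of genus $9>6$, again against Castelnuovo. You silently assume $b=0$.)

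The ``delicate speciality point'' you flag in the Gherardelli step is in fact a computational slip, not a genuine obstacle: since $k=2$ one has $2\theta=K_C$ on the nose, so $h^1(2\theta)=h^0(K_C-2\theta)=h^0(\O_C)=1$ and Riemann--Roch gives $h^0(2\theta)=16-9+1+1=9$, not $13$ (you wrote $h^0(K_C-2\theta)=h^0(\theta)$, which is wrong). With the correct value, $h^0(\I_\Gamma(2))\geq h_3(2)-9=1$ and $\Gamma$ does lie on a quadric; but the Castelnuovo-curve argument above is cleaner and is the route the paper takes. Your treatment of the trigonal case is also spurious --- a $g^3_8$ cannot be composed with a degree $3$ map since $3\nmid 8$ --- while the case actually needing exclusion ($2:1$ onto an elliptic normal quartic) is the one you and the paper both handle by the $16<18$ moduli count.
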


\begin{proof}
Let $M$ be an irreducible component of $\S _9 ^{\frac 12,3}$
containing $\mathcal H_{9,4}$. If $(C,\theta)$ is general in $M$, by
constancy of the parity one has $h^0(\theta)=5$, so $M$ coincides with $\mathcal H_{9,4}$ by Clifford's Theorem. 

Let now $M$ be an irreducible component of $\S _9 ^{\frac 12,3}$ whose general element 
$(C,\theta)$ has $h^0(\theta)=4$, so $|\theta|$ is a $g^3 _8$.
 By \eqref {eq:m}, $C$ is not hyperelliptic, since $\dim(\M_{9,2}^1)=17$. Let us show that $\theta$ determines an embedding 
of $C$ in $\P^3$.

Let $b$ be the number of
base points of $|\theta|$. Then $|\theta|$ induces a special $g^3
_{8-b}$ hence $b<2$ by Clifford's Theorem. 
If $b=1$, then the $g^3_7$ must give a birational map since $7$ is
prime. This is impossible because by Castelnuovo's bound a curve of genus $7$ 
in $\P^3$ has at most genus 6. Hence $|\theta|$ is base-point-free.

Now assume that the morphism $C \to \P^3$ determined by $|\theta|$ is not
birational. Then it is $2:1$ onto an elliptic normal
quartic curve and $C$ is bielliptic. Then $C$ depends on at most 16 moduli (see 
Lemma \ref {l:dim-loci}), in contradiction with \eqref {eq:m}. 

We have thus proved that $|\theta|$ determines a birational morphism $C \to \P^3$ onto a
degree $8$ curve $\Gamma$ in $\P^3$. Since the maximal geometric genus of a curve of degree
$8$ in $\P^3$ is $9$, then $\Gamma$ is a Castelnuovo curve, so it is a complete intersection of type $(2,4)$, proving the assertion.\end{proof}

\paragraph{$g_1=4$ and $k=2$}
\label{p:g4.k2}

Next we consider $\S _{13} ^{\frac 1 2, 4}$. If $M$ is an irreducible component of
$\S _{13} ^{\frac 1 2, 4}$ whose general element $(C, \theta)$ has $h^0(\theta)$ odd, 
then by Theorem~\ref{t:harris} one has 
\begin{equation}\label{eq:lop}
\textstyle
\dim (M)
\geq \dim(\M_{13}) - \frac {4\cdot 5} 2 
= 26.
\end{equation}
The locus $\mathcal H_{13,6}$ introduced in paragraph~\ref
{p:nonbirat.hyperell} has dimension 25 and it is contained in $\S
_{13} ^{\frac 1 2, 4}$; its general element $(C,\theta)$ has
$h^0(\theta)=7$ odd, so we conclude by
Theorem~\ref{t:harris}
that $\mathcal H_{13,6}$ cannot fill up an irreducible component
of $\S _{13} ^{\frac 1 2, 4}$. 

There are two more hyperelliptic loci of dimension 25 contained in $\S _{13} ^{\frac 1 2, 4}$, namely $\mathcal H_{13,5}^2$ and $\mathcal H_{13,4}^4$, see \ref {p:nonbirat.hyperell}. For the same reasons as above, $\mathcal H_{13,4}^4$ cannot fill up an irreducible component
of $\S _{13} ^{\frac 1 2, 4}$.

\begin{theorem}\label{prop:comp}
$\S _{13} ^{\frac 1 2, 4}$ has three irreducible components, namely:\\
\begin{inparaenum}
\item [(a)] one whose general elements  correspond to
complete intersections of type $(2,2,3)$ in $\P^4$ (see Proposition
\ref {prop:compint}), which contains the loci $\mathcal H_{13,6}$ and
$\mathcal H_{13,4}^4$;\\
\item [(b)] $\mathcal H_{13,5}^2$;\\
\item [(c)] $\mathcal E_{13,2}$ (see paragraph \ref {p:nonbirat.biell}).
\end{inparaenum}
\end{theorem}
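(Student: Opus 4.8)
The plan is to analyze an arbitrary irreducible component $M$ of $\S_{13}^{\frac12,4}$ by dividing according to the parity of $h^0(\theta)$ for $(C,\theta)$ general in $M$, and then, within the odd-parity case, according to the Brill--Noether behaviour of $|\theta|$. First I would dispose of the even-parity components: if $(C,\theta)\in M$ is general with $h^0(\theta)$ even, then by Clifford's theorem $h^0(\theta)=4$ (it cannot be larger), so $|\theta|$ is a $g^3_{12}$ with $k\theta=K_C$, and the component is the one described in Proposition~\ref{prop:compint}; one must check that the general such $\theta$ is very ample with image a complete intersection of type $(2,2,3)$ in $\P^4$, arguing as in the proof of Theorem~\ref{thm:g3} (base-point analysis via Clifford, exclusion of multiple covers using the dimension bound \eqref{eq:lop} against the dimensions in Lemma~\ref{l:dim-loci}, then Castelnuovo's bound to force the complete intersection structure). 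By Theorem~\ref{t:harris} all even-parity components have dimension $\geq 26$, matching $\dim(\KC_{13}^2)=19+13=32$ minus nothing — actually matching $\dim(\M_{13})-\binom52=26$ — and the complete intersection family has exactly this dimension (Section~\ref{S:CI}, fibre dimension $6$, giving $3\cdot13-3-$ wait: $\dim=\dim\M_{13}-6$? no: the image has dimension $\dim\KC_{13}^2 - 6 = 32-6=26$), so it is a genuine component, and there is only one of even parity.

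Next I would treat the odd-parity components. Here $h^0(\theta)$ is odd; by Clifford $h^0(\theta)\in\{1,3,5,7,\dots\}$, but $2\theta=K_C$ of degree $24$ forces $\deg\theta=12$, so Clifford gives $h^0(\theta)\leq 7$. If $h^0(\theta)\leq 3$ the component lies inside $\S_{13}^{\frac12}$ with small $g^r_d$ data and has dimension too small to be a component (or is accounted for otherwise); the interesting subcases are $h^0(\theta)\in\{5,7\}$. For $h^0(\theta)=7$: then $|\theta|$ is a $g^6_{12}$, and a genus-$13$ curve with a $g^6_{12}$ must be hyperelliptic (by Clifford's theorem with equality, since $12=2\cdot 6$), so $\theta$ is a multiple of the $g^1_2$ plus base points; matching $\deg$ and $h^0$ pins down $\theta=4\fl+p_1+\cdots+p_4$ with the $p_i$ Weierstrass points, i.e. $M=\mathcal H_{13,4}^4$ — but by the parity argument (Theorem~\ref{t:harris}, Mumford) this locus, of dimension $25<26$, cannot be a component on its own; it must lie in the even-parity complete intersection component (one checks $\mathcal H_{13,4}^4$ does degenerate there, consistently with claim (a)). The remaining odd case is $h^0(\theta)=5$, a $g^4_{12}$ with $2\theta=K_C$.

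The heart of the proof is the case $h^0(\theta)=5$, and this is where the main obstacle lies. One must show that an irreducible component $M$ whose general $(C,\theta)$ has a $g^4_{12}$ equal to a theta-characteristic is either $\mathcal H_{13,5}^2$ or $\mathcal E_{13,2}$. I would first do a base-point analysis: $|\theta|$ induces a special $g^4_{12-b}$, so Clifford forces $b\leq 2$, and small values of $b$ can be pushed through the Castelnuovo-type geometry as in Theorem~\ref{thm:g3}. Then analyze whether the map $C\to\P^4$ is birational onto its image, $2:1$, or of higher degree. If it is birational, the image is a degree-$12$ curve of genus $13$ in $\P^4$; Castelnuovo's bound for curves in $\P^4$ and the classification of extremal (Castelnuovo) curves should force the image to lie on a surface of minimal degree — a rational normal scroll or the Veronese — and one then identifies it as a complete intersection, landing in component (a), or else the dimension count \eqref{eq:lop} against Lemma~\ref{l:dim-loci} rules it out. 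If the map is $2:1$, the image is an elliptic normal quintic or rational, forcing $C$ bielliptic; tracking that the bielliptic structure is the one in paragraph~\ref{p:nonbirat.biell} with $k=2$, $g_1=4$ (so the spin structure is the pull-back of a $g^4_{12}$ on the elliptic curve) identifies $M$ with $\mathcal E_{13,2}$, whose dimension $2g-2=24$ one checks is maximal among the non-complete-intersection strata, hence it is a component; if $h^0(\theta)=5$ but the underlying curve is hyperelliptic, the hyperelliptic classification gives $\theta = 5\fl + p_1+\cdots$ with the numerics forcing $\mathcal H_{13,5}^2$, of dimension $25$, which one shows is not contained in any of the other components (parity of $h^0$ is odd, so not in (a); not bielliptic, so not in $\mathcal E_{13,2}$) and is maximal, hence a component. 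The delicate point throughout is the exhaustiveness of the Brill--Noether case analysis for the $g^4_{12}$ and ruling out further exotic components by dimension — this requires careful bookkeeping of $\dim(\M_{13})-\binom52=26$ against all the gonal, bielliptic, and bi-genus-$h$ loci of Lemma~\ref{l:dim-loci}, together with the constancy-of-parity obstruction, and it is here that one genuinely uses that $g_1=4$ is small enough for $\S_{13}^{\frac12,4}$ to have expected codimension.
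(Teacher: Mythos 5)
Your overall strategy (split by the parity of $h^0(\theta)$ via Theorem~\ref{t:harris}, then run a Clifford/Castelnuovo/Brill--Noether case analysis against the dimension bound and Lemma~\ref{l:dim-loci}) is the same as the paper's, but the execution contains errors that would sink the proof as written. The most basic one is that the parity bookkeeping is inverted. An element of $\S _{13} ^{\frac 1 2, 4}$ has $h^0(\theta)\geq 5$ by definition, so $h^0(\theta)=4$ never occurs and there is no $g^3_{12}$ case; Clifford gives $5\leq h^0(\theta)\leq 7$. A complete intersection of type $(2,2,3)$ in $\P^4$ has $K_C=\O_C(2)$ and $\theta=\O_C(1)$ with $h^0(\theta)=5$, which is \emph{odd}, while $\mathcal H_{13,5}^2$ and $\mathcal E_{13,2}$ have $h^0(\theta)=6$, which is \emph{even}. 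You assign the complete intersection component to even parity and send the case $h^0(\theta)=5$ to components (b) and (c) --- exactly backwards. Relatedly, the hyperelliptic locus with $h^0(\theta)=7$ is $\mathcal H_{13,6}$ (i.e.\ $\theta=6\fl$), not $\mathcal H_{13,4}^4$ (for which $h^0(4\fl+p_1+\cdots+p_4)=5$); and in the case $h^0(\theta)=5$ a non-birational $2:1$ map sends $C$ onto a linearly normal degree $6$ curve in $\P^4$, which has genus $2$ --- a $23$-moduli locus to be excluded by \eqref{eq:lop} --- and not onto an elliptic curve, so it does not produce $\mathcal E_{13,2}$: the bielliptic component arises only from the subcase $h^0(\theta)=6$.

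The second, more substantial problem is that the hardest step is missing. For a birational base-point-free $g^4_{12}$ you assert that Castelnuovo's bound forces the image $\Gamma$ onto a surface of minimal degree; this is false, since genus $13$ is not extremal for degree $12$ in $\P^4$ (the bound there is $15$), so $\Gamma$ need not be a Castelnuovo curve. What actually has to be proved is that $h^0(\I_\Gamma(2))=2$ (after which the count $h^0(\I_\Gamma(3))\geq 11$ yields the $(2,2,3)$ structure), and the delicate point is to exclude $h^0(\I_\Gamma(2))=3$, i.e.\ that $\Gamma$ lies on a cubic scroll $S$; the paper does this by exhibiting quintics containing $\Gamma$ but not $S$ and analysing the three possible residual divisors $D=5H-\Gamma$ of degree $3$, each killed by a moduli count against $\dim(M)\geq 26$ or by a direct linear-equivalence contradiction. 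You also omit the trigonal subcase ($3:1$ onto a rational normal quartic) of the non-birational base-point-free $g^4_{12}$, which needs its own scroll computation. Finally, ``maximal among the non-complete-intersection strata'' is not a valid argument that $\mathcal E_{13,2}$ is a component: what is needed is that any component containing it has general element with $h^0(\theta)$ even, hence equal to $6$, and that the $h^0(\theta)=6$ analysis leaves only $\mathcal E_{13,2}$ itself and $\mathcal H_{13,5}^2$, which does not contain it.
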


\begin{proof}
Let $(C,\theta)$ be general in some irreducible component $M$ of $\S _{13} ^{\frac 1 2, 4}$. By Clifford's Theorem, we have $h^0(\theta)\leq 7$, and, as we saw, the case $h^0(\theta)=7$ cannot occur.

Suppose first that $h^0(\theta)>5$; then the only possibility is
$h^0(\theta)=6$. The linear system $|\theta|$ cannot be birational by
Castelnuovo's bound, so it determines a $2:1$ morphism of $C$ to a
non-degenerate, linearly normal curve $\Gamma$ of degree $d\leq 6$ in
$\P^5$. If $d=6$ then $\Gamma$ has genus 1 and $C$ is bi-elliptic. In
this way we find the locus $\mathcal E_{13,2}\subseteq \S _{13}
^{\frac 1 2, 5}\subseteq \S _{13} ^{\frac 1 2, 4}$. This is an
irreducible component of $\S _{13} ^{\frac 1 2, 4}$, because it cannot
be contained in an irreducible component of $\S _{13} ^{\frac 1 2, 4}$
whose general element $(C,\theta)$ has $h^0(\theta)=5$, since the
parity of $h^0(\theta)$ has to be preserved in a component. If $d=5$,
then $\Gamma$ is a rational normal curve, $C$ is hyperelliptic and we
find the locus $\mathcal H_{13,5}^2$. This is a component of $\S _{13}
^{\frac 1 2, 4}$ because it cannot be contained in an irreducible
component of $\S _{13} ^{\frac 1 2, 4}$ whose general element
$(C,\theta)$ has $h^0(\theta)=5$, and neither can it be contained in
$\mathcal E_{13,2}$ which has smaller dimension.

Next we turn to the case in which the general element $(C,\theta)$ in $M$ is such that $C$ is not hyperelliptic and 
$|\theta|$ is a $g^4_{12}$. Let $b$ be the
number of its base points, so that $|\theta|$ induces a $g^4_{12-b}$.
By Clifford's Theorem, we have $b<4$.
If $b>0$ then $|\theta|$ cannot be birational by Castelnuovo's bound.
If $b=3$,  the $g^4_9$ must
give a birational map, and this is impossible.
If $b=2$, we have a $g^4_{10}$, which is  $2:1$
onto an elliptic normal quintic.
Then $C$ is bi-elliptic. However bielliptic curves depend on 24 moduli (see
Lemma \ref {l:dim-loci}), whereas $\dim(M)\geq 26$ by \eqref {eq:lop}, so this is against the generality of 
$(C,\theta)$ in $M$. Finally, if $b=1$, $|\theta|$ is a $g^4_{11}$ hence it 
gives a birational map, which is impossible.

So  $|\theta|$ is a base point free $g^4_{12}$.
First we discuss the case in which this series is not birational. In this case we have two possibilities:\\
\begin{inparaenum}
\item [(a)] either the $g^4_{12}$ determines a $3:1$ morphism 
onto a rational normal quartic,\\
\item [(b)] or the $g^4_{12}$ determines a $2:1$ morphism 
onto a degree $6$ curve.
\end{inparaenum}

We first show by contradiction that case (a) cannot happen. Indeed,
in this case $C$ is trigonal, we denote by $\mathfrak g$ the $g^1_3$ on $C$, and we have $K_C=8\mathfrak g$. 
The canonical model of $C$ in $\P^{12}$ sits on a
rational normal scroll $S$ of degree $11$, which is easily seen to be
smooth.
On $S$ we have $C \lineq 3H-9F$ where
$H$ is  the hyperplane class on $S$, and $F$ the class of a
ruling.
Now $\restr F C=\mathfrak g$ and $\restr H C=K_C=8\mathfrak g$.
Hence $S$ should have a hyperplane section consisting of
$8$ rulings plus a curve $E$ of degree $3$ such that $E\cdot C=0$. The 
curve $E$ is  irreducible (because $E\cdot F=1$ and it
cannot contain fibres), and one has $E^2=-5$. It follows that
$S$ is an $\F_5$, and $H\lineq L+3F$, with $L\lineq
E+5F$. Thus $C \lineq 3L$. The linear system
$|3L|$ on $\F_5$ has dimension $33$, and $\Aut(\F_5)$ has dimension $10$, so the
curve $C$ has at most $23$ moduli, and therefore cannot be a general
element of $M$ by \eqref {eq:lop}. 
This proves that (a) cannot happen.

Next we show that case (b) also cannot happen. 
In that case in fact the image curve of $C$ via the 
$g^4_{12}$ is linearly normal of degree 6 in $\P^4$, so it has genus 2.
By Lemma~\ref{l:dim-loci}, genus $13$ double covers of genus $2$
curves have $23$ moduli, so also this possibility is in contradiction
with the generality of $C$ by \eqref {eq:lop}. 

Therefore the only remaining possibility is that $|\theta|$ is a
birational $g^4 _{12}$. Let $\Gamma$ be the image of $C$ via the morphism determined
by $|\theta|$. 
One has
\[
h^0(\I_\Gamma(2)) \geq h_4(2)-h^0(\omega_C)=15-13=2.
\]
Assume for a moment that
$h^0(\I_\Gamma(2))=2$. Then, since $3\theta = K_C+\theta$ is
non-special, one has
 $h^0(3\theta) = 24$ by Riemann--Roch Theorem, hence
\[
h^0(\I_\Gamma(3)) \geq h_4(3)-h^0(3\theta)=35-24=11,
\]
and therefore there exists at least one cubic containing $\Gamma$, not
a combination of the
 quadrics containing $\Gamma$.
For degree reasons we may  conclude that $\Gamma$ is a complete
intersection of type $(2,2,3)$ as asserted.

We are thus left to prove that $h^0(\I_\Gamma(2))=2$. Assume by
contradiction that $h^0(\I_\Gamma(2))>2$. Since $\deg(\Gamma)>8$,  the quadrics containing $\Gamma$
must have base locus a non-degenerate surface, and the only
possibility is that this is a cubic scroll $S$
and then $h^0(\I_\Gamma(2)) = h^0(\I_S(2))=3$.
Now, with the same computations as above, we see that $h^0(\I_\Gamma(5))\geq 78$, whereas $h^0(\I_S(5))= 75$, so there are quintic hypersurfaces containing $\Gamma$ but not $S$. Let 
$H$ be the hyperplane section of $S$ and set $D=5H-\Gamma$. We have $\dim(|D|)\geq 2$ and $\deg(D)=3$. Given this, the possibilities for $D$ are the following:\\
\begin{inparaenum}
\item [(i)] $D\lineq H$;\\
\item [(ii)] $S$ is not a cone and $D$ consists of 3 rulings;\\
\item [(iii)] $S$ is not a cone and $D$ consists of a conic plus the line directix $E$ of $S$.\\
\end{inparaenum}
We will see that neither one of these cases is possible.

In case (i), we have $\Gamma\lineq 4H$. Then the arithmetic genus of $\Gamma$ is 15, hence $\Gamma$ has two nodes or equivalent singularities because the geometric genus is 13. The adjoint system is $4H+K_S\lineq 2H+F$, where $F$ is a ruling of $S$, hence it is possible that the pull-back on $C$ of a
hyperplane section is semi-canonical if the two nodes of $\Gamma$ lie on a ruling of $S$.  Let us count the number of moduli for such curves $\Gamma$. We have $h^0(4H)=35$. For each ruling we have $2$ parameters for the choice of the nodes of $\Gamma$ on that ruling, hence $3$ parameters in total for the position of
the nodes. Given the nodes we have a linear system of dimension $34-6=28$ (and not larger) of curves in $|4H|$ with the chosen nodes. Finally the automorphism group of $S$ has at least
dimension $6$, so  the number of moduli is not larger than $3+28-6=25$. By \eqref {eq:lop} this contradicts the generality of $(C,\theta)$ in $M$. 

In case (ii) we have
\[
  \Gamma\lineq 5H-3F=5E+7F
  \quad \text{and}
  \quad
  K_S+\Gamma \lineq 3E+4F,
\]
with $E$ the negative curve on $S$.
Then  $\Gamma$ has arithmetic genus 14, so it has only one node (or one cusp).  It is possible that the pull-back on $C$ of a
hyperplane section is semi-canonical if the double point of $\Gamma$ lies on $E$.  Again, let us count the number of moduli for such curves $\Gamma$. We have $h^0(5E+7F)=33$. The choice of the double point of $\Gamma$ depends on only 1 parameter (a point on $E$). Given the double point we have a linear system of dimension $32-3=29$ (and not larger) of curves in $|5E+7F|$ with the chosen double point. Finally the automorphism group of $S$ has 
dimension $6$, so  the number of moduli is $1+29-6=24$. By \eqref {eq:lop} this again contradicts the generality of $(C,\theta)$ in $M$. 

Finally in case (iii) we have $\Gamma\lineq 3(E+3F)$, whose arithmetic genus is 13. So $\Gamma=C$ is smooth. We have
\[
  K_S+C \lineq E+6F.
\]
Assume that the hyperplane series on $C$ is semi-canonical. This means that 
\[
\restr{E+6F} C \lineq \restr{2E+4F}C \quad \text{hence}\quad  \restr E C\lineq \restr {2F} C,
\]
which is impossible: indeed $|2F|$ cuts out on $C$ a complete $g^2_6$
composed with the $g^1_3$ cut out by $|F|$. If $\restr E C\lineq
\restr {2F} C$, then the six points cut out by $E$ on $C$ would also
belong to the union of two rulings, which is not
possible.
\end{proof}

\paragraph{$g_1=5$ and $k=2$}
\label{p:g1=5:k=2}

This case is more complicated than the previous ones and we do not
have complete results. We have here $g=17$. If $M$ is an irreducible
component of $\S _{17} ^{\frac 12,5}$ with general element
$(C,\theta)$ such that $h^0(\theta)$ is even then, as usual, we have 
\begin{equation}	\label{eq:pap}
\textstyle
\dim (M)\geq  48 - \frac {5\cdot 6} 2 = 33.
\end{equation}
There is the  \emph{main component}
$(\S _{17} ^{\frac 1 2,5})^{\mathrm{main}}$,
that of complete intersections $(2,2,2,2)$,
that has 
dimension
\[
  \dim \bigl(
  (\S _{17} ^5)^{\mathrm{main}}
  \bigr)
  = \dim \bigl(
  \Gr(4,H^0(\O_{\P^5}(2)))
  \bigr)
  - \dim \bigl(
  \PGL(6)
  \bigr)
  = 4 \cdot 17 - 35 = 33
\]
equal to the expected one since the general element $(C,\theta)$ has
$h^0(\theta)=6$.

Let us look for non-birational loci. The interesting hyperelliptic loci we found in Section \ref {p:nonbirat.hyperell} are $\mathcal H_{17,8}$, $\mathcal H_{17,7}^2$, $\mathcal H_{17,6}^4$, $\mathcal H_{17,5}^6$, all of dimension $33$. If $(C,\theta)$ is in $\mathcal H_{17,8}$, one has $h^0(\theta)=9$ which has different parity than $6$. So if $M$ is a component of $\S _{17} ^{\frac 12,5}$ containing $\mathcal H_{17,8}$ we have the estimate  
\begin{equation}\label{eq:pap2}
\textstyle
\dim(M)\geq  48 - \frac {6\cdot 7} 2 = 27.
\end{equation}
The same estimate holds for a component $M$ of $\S _{17} ^{\frac 12,5}$  containing $\mathcal H_{17,6}^4$. By contrast, if $(C,\theta)$ is in $\mathcal H_{17,7}^2$ or in  $\mathcal H_{17,5}^6$ then $h^0(\theta)$ is even, and for a component of $\S _{17} ^{\frac 12,5}$ containing these loci the estimate \eqref {eq:pap} holds. 

Look next at the bielliptic locus $\mathcal E_{17,2}\subseteq \mathcal S_{17}^{\frac 12, 7}\subseteq \S _{17} ^{\frac 12,5}$ (see Section \ref {p:nonbirat.biell}). We have $\dim(\mathcal E_{17,2})=32$. If $M$ is an irreducible component of $\S _{17} ^{\frac 12,5}$ containing $\mathcal E_{17,2}$, we have the estimate \eqref {eq:pap}. So $\mathcal E_{17,2}$ has at least codimension 1 in an irreducible component of $\S _{17} ^{\frac 12,5}$ containing it.

Finally, look at the locus $\mathcal D_{17,2}\subseteq S_{17}^{\frac 12, 6}\subseteq \S _{17} ^{\frac 12,5}$ (see Section \ref {p:nonbirat.g2}). We have $\dim(\mathcal D_{17,2})=33$. If $M$ is an irreducible component of $\S _{17} ^{\frac 12,5}$ containing $\mathcal D_{17,2}$, we have the estimate \eqref {eq:pap2}. 

\begin{proposition}\label{prop:g1=5} Let $M$ be an irreducible component of $\S _{17} ^{\frac 12,5}$ and let $(C,\theta)$ be its general element. Suppose that $C$ has no $g^1_4$. Then $h^0(\theta)=6$, and $|\theta|$ is base point free and birational. \end{proposition}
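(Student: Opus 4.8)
The plan is to study the map $\phi_{|\theta|}$ and combine Castelnuovo's genus bound with the estimate $\dim M\geq 33$ coming from \eqref{eq:pap}. Write $r=h^0(\theta)-1$; since $\theta$ is special ($K_C-\theta=\theta$ and $h^0(\theta)\geq 6$), Clifford's theorem gives $5\leq r\leq 8$. Let $B$ be the base divisor of $|\theta|$, $b=\deg B$, and let $\phi\colon C\to\Gamma\subset\P^r$ be the morphism defined by $|\theta-B|$, so $\Gamma$ is nondegenerate, $\deg\Gamma\geq r$, and $e\,\deg\Gamma+b=16$ with $e=\deg(C/\Gamma)$. The whole statement reduces to proving $r=5$, $e=1$ and $b=0$. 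Throughout, $\pi(d,r)$ denotes Castelnuovo's bound on the genus of a nondegenerate integral curve of degree $d$ in $\P^r$.

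First I would rule out $e\geq 2$. Then $\deg\Gamma\leq 16/e\leq 8$, so $r\leq\deg\Gamma\leq 8$, and running through the finitely many triples $(r,\deg\Gamma,e)$ allowed by these constraints while bounding the geometric genus of $\Gamma$ by $\pi(\deg\Gamma,r)$ shows that $\Gamma$ is rational, elliptic, or of genus $2$ or $3$, with $e=2$ in all cases except $\deg\Gamma=r=5$ (a rational normal quintic), where $e$ may be $3$. If $\Gamma$ has genus $\leq 2$, so carries a $g^1_2$ — or if $\Gamma$ is a rational normal quintic — then $C$ is hyperelliptic, trigonal, bielliptic, or a double cover of a genus‑$2$ (hence hyperelliptic) curve, and pulling back a minimal pencil via $\phi$ yields a $g^1_4$ on $C$, contrary to hypothesis. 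The one remaining configuration is $r=5$, $\deg\Gamma=8$, $e=2$, $b=0$ with $\Gamma$ of genus $3$: if $\Gamma$ is hyperelliptic we again get a $g^1_4$; otherwise $\theta=\phi^*\O_\Gamma(1)$ with $\O_\Gamma(1)$ the complete $g^5_8$ on $\Gamma$, and $(C,\theta)$ is then determined up to finitely many choices by $\Gamma$, the point $\O_\Gamma(1)\in\Pic^8(\Gamma)$, and the branch divisor $B\in|2L|$, $L=2\O_\Gamma(1)-K_\Gamma\in\Pic^{12}(\Gamma)$, so it varies in a family of dimension $\dim\M_3+3+\dim|2L|=6+3+21=30<33$, too small to be a component of $\S_{17}^{\frac12,5}$. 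In each case we reach a contradiction, hence $e=1$.

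With $e=1$, $\Gamma$ is the birational image of $C$, so it has geometric genus $17$ and degree $16-b$, whence $17\leq\pi(16-b,r)\leq\pi(16,r)$. A direct computation gives $\pi(16,r)<17$ for every $r\geq 6$, so $r=5$ and $h^0(\theta)=6$; moreover $\pi(14,5)=15<17\leq 18=\pi(15,5)$ forces $b\leq 1$. Suppose $b=1$, so $\Gamma\subset\P^5$ has degree $15$ and arithmetic genus $p_a\in\{17,18\}$. Since $17$ exceeds the Castelnuovo–Harris bound $\pi_1(15,5)$ for curves not lying on a surface of minimal degree, $\Gamma$ lies on a minimal-degree ($=4$) surface in $\P^5$, necessarily a rational normal scroll (the Veronese surface carries no odd-degree curve). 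Going through the admissible numerical classes of degree $15$ on such a scroll ($\F_0$ embedded by $\O(2,1)$, $\F_2=S(1,3)$, or the cone $S(0,4)$, the latter in fact carrying no integral curve of degree $15$), the only class with geometric genus $17$ has $p_a=18$ — so $\Gamma$ is $1$-nodal and $p_a=17$ does not occur — and it meets the line-ruling of the scroll in $4$ points; restricting that ruling to $\Gamma$ and pulling back to $C$ produces a $g^1_4$ on $C$, a contradiction. Therefore $b=0$, i.e.\ $|\theta|$ is base point free, and by $e=1$ it is birational.

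The main obstacle is the treatment of the case $b=1$: one must invoke the refined Castelnuovo–Harris bound $\pi_1(15,5)$ to force $\Gamma$ onto a rational normal scroll, identify the unique admissible class, and extract the forbidden $g^1_4$ from the line-ruling; the cone subcase also needs separate (if elementary) attention. The genus‑$3$ double-cover configuration in the second step is the other delicate point, where the conclusion is obtained not from a $g^1_4$ but from a dimension estimate against \eqref{eq:pap}.
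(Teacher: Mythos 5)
Your proposal is correct and follows the same overall skeleton as the paper's proof: Clifford plus Castelnuovo pin down $h^0(\theta)=6$, the non-birational cases are excluded by pencil pullbacks and moduli counts against the bound \eqref{eq:pap}, and the base-point case reduces to a degree-$15$ curve on a quartic scroll killed by its ruling. The two delicate steps are, however, handled with different tools. For the non-birational case the paper disposes of all low-genus targets at once by the moduli count of Lemma~\ref{l:dim-loci} ($\leq 32<33$), while you use the forbidden $g^1_4$ whenever the target has genus $\leq 2$ and reserve a dimension count for the genus-$3$ double cover; your count $6+3+21=30$ agrees with the lemma, so both work. The real divergence is at $b=1$: the paper shows $h^0(\I_\Gamma(2))\geq 6$ by hand, splits according to whether the quadrics cut out a curve (then $\Gamma+\ell$ is a $(2,2,2,2)$ complete intersection, excluded by moduli) or a minimal-degree surface, and in the smooth scroll case counts quintics to get $\Gamma\sim 4H-R$, hence $F\cdot\Gamma\leq 4$; you instead invoke the refined Castelnuovo--Harris bound $\pi_1(15,5)<17$ to place $\Gamma$ directly on a quartic scroll and then enumerate the numerical classes of degree $15$ and geometric genus $17$. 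Your route is shorter but imports a stronger black box; the paper's is longer but self-contained. One parenthetical of yours is false: the cone $S(0,4)$ does carry integral curves of degree $15$ (for instance the image of a general member of $|C_0+15f|$ on $\F_4$). What is true, and all you need, is that the class of such a curve on the resolution is $aC_0+15f$ with $a\leq 3$, so its arithmetic genus is at most $16$ and the ruling cuts a $g^1_{\leq 3}$ on its normalization --- which is exactly how the paper handles the cone. This slip does not affect the validity of the argument.
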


\begin{proof}
Let $M$ be an irreducible component of $\S _{17} ^{\frac 12,5}$ and let $(C,\theta)$ be its general element. By the hypothesis, $C$ is not hyperelliptic. Hence $h^0(\theta)< 9$ by Clifford's Theorem. Moreover, if $h^0(\theta)>6$, then $|\theta|$ is not birational, by Castelnuovo's bound. 

If $h^0(\theta)=7$, since $|\theta|$ is not birational, it must determine a 
$2:1$ morphism of $C$ onto a non-degenerate linearly normal curve $\Gamma$ of degree $d\leq 8$ in $\P^6$. Then $\Gamma$ has genus at most 2, hence $C$ has a $g^1_4$, a contradiction. 

Similarly, if $h^0(\theta)=8$,  $|\theta|$  determines a $2:1$ morphism of $C$ onto a non-degenerate linearly normal curve $\Gamma$ of degree $d\leq 8$ in $\P^7$. Then $\Gamma$ has genus at most 1, so that $C$ has a $g^1_4$ a contradiction again. 

So we may assume $h^0(\theta)=6$ and the estimate \eqref {eq:pap} holds. We claim that $|\theta|$ is birational. 
Suppose this is not the case. Let $h\geq 2$ be the degree of the map determined by $|\theta|$ onto a non-degenerate curve $\Gamma$ of degree $d$ in $\P^5$. We have 
\[
5\leq d\leq \frac {16}h \quad \text{hence} \quad 5h\leq 16.
\]
Then either $h=3$, $d=5$, or $h=2$ and one has $d>5$ since $C$ is not
hyperelliptic. In the former case $C$ would be trigonal, a
contradiction. In the latter cases, by Lemma \ref {l:dim-loci}, $C$
would depend on at most 32 moduli, contradicting the estimate
\eqref {eq:pap}. This proves that $|\theta|$ is birational.

Finally we prove that  $|\theta|$ is base point free.
Let $b$ be the number of base points of $|\theta|$, and consider the
complete, base-point-free, special  $g^5_{16-b}$ induced by $|\theta|$.
We have $b\leq 1$ because, by
Castelnuovo's bound, a curve in $\P^5$ of degree
$d\leq 14$ has genus at most $15$. So let us discuss the case $b=1$. 

If $b=1$ we have a birational, base point free $g^5_{15}$, which maps
$C$ to a curve 
$\Gamma$ of degree 15 in $\P^5$. We claim that $\Gamma$  is contained in at least $6$ linearly independent quadrics.
Indeed we have
\[
  h^0(\O_{\Gamma}(2)) \leq h^0 \bigl(\omega_C(-2p)
  \bigr)=15
\]
where $p$ is the base point of $|\theta|$
(the last equality is because $C$ is not hyperelliptic), hence
\[
h^0(\I_{\Gamma}(2)) \geq h_5(2)-h^0(\O_{\Gamma}(2)) 
\geq 21-15=6.
\]
If the quadrics containing $\Gamma$ have base locus a curve, then
there exists a line $\ell$ such that $\Gamma+\ell$ is a
complete intersection of type $(2,2,2,2)$. Then $\Gamma$, and its
normalization $C$, have strictly less number of moduli than 33, the number of moduli of complete
intersections of type $(2,2,2,2)$, in contradiction with the estimate \eqref {eq:pap}.

Otherwise the base locus of the quadrics containing $\Gamma$ is a rational normal quartic scroll.
In fact this base locus cannot be a threefold because the maximum number of quadrics containing an irreducible, non-degenerate threefold in $\P^5$ is 3, and it is achieved by the Segre variety $\P^1\times \P^2$ of degree 3. So the base locus in question is a surface, and it is a minimal degree surface $S$, which in fact has the property that $h^0(\mathcal I_{S}(2))=6$. On the other hand $S$ cannot be the Veronese surface because there are no curves of odd degree on it. 

Finally we discuss the case in which $\Gamma$ is contained in a rational normal quartic scroll $S$. If $S$ is a cone with vertex $p$, let $m$ be the number of points in which the rulings intersect $\Gamma$ off $p$, so that $C$ has a $g^1_m$. By intersecting $\Gamma$ with a general hyperplane through $p$ we see that $4m\leq 15$, which implies $m\leq 3$, a contradiction. 

Next we assume $S$ to be smooth. By Riemann--Roch we have
$h^0(\mathcal O_\Gamma(5))\leq 59$ and $h^0(\mathcal O_S(5))=66$
(whether $S$ be an $\F_0$ or an $\F_2$), so 
\[
h^0(\mathcal I_{\Gamma}(5))\geq 252-59>252-66=h^0(\mathcal I_{S}(5)).
\]
Hence there are quintic hypersurfaces containing $\Gamma$ that do not contain $S$. They cut out on $S$ a linear system $\mathcal L$ of dimension at least 6 of quintic curves $D$. The curves in $\mathcal L$ cut out on a general hyperplane section $H$ of $S$ a linear series $g^r_5$  with $r\leq 5$. Hence there are curves in $\mathcal L$ containing $H$. So $D\lineq H+R$, where $R$ is a line on $S$, and we have $\Gamma \lineq 
5H-D\lineq 4H-R$. If $F$ is a ruling of $S$, then $F\cdot\Gamma\leq
4$, hence $C$ has a $g^1_4$, a contradiction.
\end{proof}\medskip

Unfortunately the discussion whether there is only one component as in the statement of Proposition \ref {prop:g1=5}, coinciding  with $ (\S _{17} ^5)^{\mathrm{main}}$, is quite intricate and we will not dwell on this here.

\subsection{The case $k=3$}\label{sec:k3}

\paragraph{$g_1=3$ and $k=3$}
Here we analyze the possibilites for curves in $\S _{19} ^{\frac 1 3, 3}$,
\ie pairs $(C,\theta)$ with $C$ of genus 19, $h^0(\theta)\geq 4$, and $3\theta \lineq K_C$,
hence $\deg(\theta) = 12$.
We do not have an expected dimension for $\S _{19} ^{\frac 1 3, 3}$, but we know that
\[
\dim \bigl(\im (c_{19}^3) \bigr)
= \dim (\KC _{19} ^3) - 4
= 34
\]
(see  \ref {par:fibre}).

There are certainly irreducible loci in $\S _{19} ^{\frac 1 3, 3}$,
whose general element $(C,\theta)$ 
is such that $|\theta|$ is not birational. For example, consider $\mathcal H_{19,6}$ which has dimension 37 and its general element 
$(C,\theta)$ is such that $h^0(\theta)=7$. However we can prove the following:

\begin{proposition}\label{prop:no} If $(C,\theta)\in \S _{19} ^{\frac 1 3, 3}$ and $C$ has no $g^1_6$, then $|\theta|$ is birational. 
\end{proposition}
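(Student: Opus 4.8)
The plan is to argue by contradiction, assuming $|\theta|$ determines a map $\phi\colon C\to\P^3$ of degree $h\geq 2$ onto a non-degenerate curve $\Gamma$ of degree $d$, with $h^0(\theta)=4$ (the case $h^0(\theta)\geq 5$ is easily excluded by Castelnuovo's bound since then $\Gamma$ would be a non-degenerate curve of degree $\leq 12$ in $\P^{\geq 4}$ of genus $19$, which is impossible) and $b$ base points, so that $\deg(\Gamma)=d=(12-b)/h$. First I would dispense with base points: by Clifford's theorem applied to the complete special series $|\theta|$ induces, one has $b$ small; and the residual birational/multiple cover analysis on $\P^3$ together with Castelnuovo's genus bound for space curves pins down the few numerical possibilities for $(h,d,b)$. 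The key point is that $12-b = hd$ with $h\geq 2$, so $d\leq 6$, and a degree-$d\leq 6$ curve in $\P^3$ has bounded geometric genus; combined with $h$ being the gonality-type contribution, one forces $\Gamma$ to be rational or elliptic and $h$ to be $2$ or $3$.

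The heart of the argument is then to rule out each surviving case by exhibiting a $g^1_6$ on $C$, contradicting the hypothesis. If $h=2$, then $C$ is a double cover of a rational or elliptic curve $\Gamma$ of degree $d\leq 6$; pulling back a point of $\Gamma$ (or a suitable degree-$3$ divisor on $\Gamma$ when $\Gamma$ is elliptic of low degree) yields a $g^1_{2d}$ or, more carefully, a pencil of degree dividing into a $g^1_6$ — here I would use that $\deg\theta=12=2d$ only when $d=6$, and that in the remaining subcases $b>0$ lowers the degree, so the induced base-point-free series has degree $12-b$ with $12-b$ even and $\leq 10$, giving $\Gamma$ of degree $\leq 5$, genus $0$, hence $C$ hyperelliptic and a fortiori carrying a $g^1_6$. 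If $h=3$, then $12-b=3d$ forces $d=4$ (for $b=0$) or $d\leq 3$, so $\Gamma$ is a rational normal quartic or lower, $C$ is trigonal, and the $g^1_3$ gives a $g^1_6$ by adding a general fibre — again a contradiction.

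The main obstacle I expect is the careful bookkeeping for the $h=2$, $d=6$ case, where $\phi$ is genuinely birational-free of base points and $\Gamma$ is an elliptic sextic: here one must show directly that a double cover of an elliptic curve of genus $19$ carries a $g^1_6$, which follows because such a $C$ is bielliptic and one can pull back a $g^0_3$ on the elliptic base — but one should check the pull-back is honestly a $g^1_6$ and not composed with the involution in a way that only gives a $g^1_4$ or $g^1_2$; in fact the bielliptic involution itself gives a $g^1_2\subset g^1_6$. Once the numerics are fixed this is a short verification, and the proposition follows. I would also remark that this is sharp: the locus $\mathcal H_{19,6}$ (and the curves on quadrics constructed in \ref{p:g3.quadric}) show that without the ``no $g^1_6$'' hypothesis $|\theta|$ genuinely can fail to be birational.
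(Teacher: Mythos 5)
Your treatment of the case $h^0(\theta)=4$ follows essentially the same route as the paper's (bound the number $b$ of base points by Clifford's theorem, then enumerate the possible multiple covers and exhibit a $g^1_{\leq 6}$ in each case), but there is a genuine gap at the very first step: you dismiss the cases $h^0(\theta)=5$ and $h^0(\theta)=6$ by asserting that Castelnuovo's bound forbids a non-degenerate curve of degree $\leq 12$ and genus $19$ in $\P^{\geq 4}$. That argument presupposes that the image $\Gamma$ has geometric genus $19$, i.e.\ that $|\theta|$ is birational onto its image --- which is exactly the negation of your contradiction hypothesis. Under that hypothesis $C\to\Gamma$ has degree $h\geq 2$, so $\Gamma$ has degree at most $6$ and small genus, and Castelnuovo's bound gives no contradiction at all. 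These cases are not vacuous: Clifford's theorem (using that $C$ is not hyperelliptic) only caps $h^0(\theta)$ at $6$, and for instance with $h^0(\theta)=5$ and $b=0$ a non-birational $g^4_{12}$ could a priori be a $3:1$ map onto a rational normal quartic (making $C$ trigonal, hence carrying a $g^1_6$) or a $2:1$ map onto a genus-$2$ sextic in $\P^4$ (giving a $g^1_4\subset g^1_6$). They must be ruled out by the same base-point/multiple-cover analysis run in $\P^4$ and $\P^5$; the paper does this explicitly for $h^0(\theta)=5$ and leaves $h^0(\theta)=6$ to the reader.

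Two smaller slips in your $h^0(\theta)=4$ discussion. A non-degenerate sextic in $\P^3$ can have genus up to $4$, so $\Gamma$ is not ``forced to be rational or elliptic''; the contradiction in the $(h,d)=(2,6)$ subcase comes instead from the fact that any such $\Gamma$ has gonality at most $3$, whence $C$ acquires a $g^1_{\leq 6}$. And the bielliptic involution does not give a $g^1_2$ on $C$: its fibres move in a family parametrised by the elliptic base, not by a $\P^1$, so they are not linearly equivalent. What you want there is the pull-back of a $g^1_2$ from the elliptic curve, which is a $g^1_4$ on $C$ and hence yields a $g^1_6$ after adding two base points.
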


\medskip\noindent
(By way of comparison, a general curve complete intersection of type
$(3,4)$ has Clifford index $6$ by Lemma~\ref{l:cliff}, hence has no
$g^1_7$). 

\begin{proof} Suppose  $(C,\theta)$ is such that $|\theta|$ is not birational. Since $C$ is not hyperelliptic, by Clifford's Theorem we have $h^0(\theta)<7$, which leaves the possibilities $4\leq h^0(\theta)\leq 6$.

Suppose that $h^0(\theta)=4$. Let $b$ be the number of base points of $|\theta|$. Since $C$ is non-hyperelliptic, again by Clifford's Theorem we have $b\leq 5$. If $b=5$, $|\theta|$ determines a $g^3_7$, which should be birational, a contradiction.
If $b=4$ we obtain a $g^3_8$. It may give a double cover of a curve of degree 4, which has genus at most 1.
Then $C$ has a $g^1_4$, a contradiction.
If $b=3$ we have a $g^3_9$. This gives a $3:1$ map to a rational normal cubic curve, hence $C$ is trigonal, a contradiction.
If $b=2$ we have a $g^3_{10}$, which should give a double cover of a
quintic; a quintic in $\P^3$ has genus at most 2, so $C$ has a $g^1_4$, a contradiction.
If $b=1$ we have a $g^3_{11}$, necessarily birational, a contradiction.
If $|\theta|$ is a non-birational, base point free $g^3_{12}$, it
gives a double cover of a sextic curve,
or a triple cover of a quartic curve. In both cases we see that $C$ has a $g^1_6$, a contradiction again.
So the case $h^0(\theta)=4$ is ruled out.

If $h^0(\theta)=5$, with the same notation as above, by Clifford's Theorem we have $b\leq 3$.
If $b=3$, $|\theta|$ determines a $g^4_9$ which should be birational, a contradiction. 
If $b=2$, we have a $g^4_{10}$ which gives a $2:1$ map to a quintic, which has genus 1, so $C$ has a $g^1_4$, a contradiction. If $b=1$, we have a $g^4_{11}$ which should be birational, a contradiction. 
If $b=0$, then $|\theta|$ is a $g^4_{12}$ which either determines a
$3:1$ morphism of $C$ to a rational normal quartic, or a $2:1$
morphism of $C$ to a genus 2 sextic curve, both cases leading to a
contradiction.
So also the case $h^0(\theta)=5$ is ruled out.

The case $h^0(\theta)=6$ can be ruled out with similar arguments, we
leave the details to the reader.
\end{proof}\medskip

Actually we can be more precise:

\begin{proposition}\label{prop:bir} Let $(C,\theta)\in \S _{19} ^{\frac 1 3, 3}$ be such that $C$ has no $g^1_6$. Then $C$ is a complete intersection of type $(3,4)$ in $\P^3$. 
\end{proposition}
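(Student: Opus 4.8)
The plan is to combine Proposition~\ref{prop:no} with a direct projective analysis of the model of $C$ given by $|\theta|$, in the spirit of the proof of Theorem~\ref{thm:g3}. First I would pin down $h^0(\theta)$ and the base locus of $|\theta|$. By Proposition~\ref{prop:no} the map given by $|\theta|$ is birational onto its image, and the no-$g^1_6$ hypothesis makes $C$ non-hyperelliptic, so Clifford's theorem applied to the special divisor $\theta$ (of degree $12$) gives $h^0(\theta)\leq 6$. If $h^0(\theta)$ were $5$ or $6$, then $C$ would map birationally onto a non-degenerate curve of degree $\leq 12$ in $\P^4$, resp.\ $\P^5$, whose arithmetic genus is at most $15$, resp.\ $10$, by Castelnuovo's bound — impossible since $p_g(C)=19$. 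Hence $h^0(\theta)=4$. To rule out base points, suppose $|\theta|$ has $b\geq 1$ of them; then $C$ maps birationally onto a non-degenerate curve $\Gamma'\subset\P^3$ of degree $12-b$. If $b\geq 2$ then $\deg\Gamma'\leq 10$, and $p_g(C)=19$ exceeds Castelnuovo's bound ($16$) for such a curve, a contradiction. If $b=1$, write $p$ for the base point and $H'=\theta-p$ for the hyperplane class of $\Gamma'$; since $C$ has no $g^1_3$ we get $h^0(3p)=1$, hence by Riemann--Roch $h^0(\O_{\Gamma'}(3))\leq h^0(3H')=h^0(K_C-3p)=16$, so $h^0(\I_{\Gamma'}(3))\geq 4$. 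But $C$ having no $g^1_6$ forces $\Gamma'$ off every quadric (the rulings of a quadric through $\Gamma'$ would cut out a $g^1_{\leq 5}$ on $C$), and then any two independent cubics through the irreducible non-degenerate $\Gamma'$ are coprime, so $\Gamma'$ would lie on their intersection, a curve of degree $9$ — absurd, as $\deg\Gamma'=11$. Thus $|\theta|$ is base-point-free and $C$ maps birationally onto a non-degenerate curve $\Gamma\subset\P^3$ of degree $12$ with $p_g(\Gamma)=19$.

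Next I would exhibit the two defining hypersurfaces. Pulling back along $C\to\Gamma$, the sheaf $\O_\Gamma(h)$ becomes $h\theta$; in particular $h^0(\O_\Gamma(3))\leq h^0(3\theta)=h^0(K_C)=19$ and, since $4\theta$ has degree $48>36=\deg K_C$ and is non-special, $h^0(\O_\Gamma(4))\leq h^0(4\theta)=30$. Therefore $h^0(\I_\Gamma(3))\geq 20-19=1$ and $h^0(\I_\Gamma(4))\geq 35-30=5$. I claim $h^0(\I_\Gamma(3))=1$: if there were two independent cubics through $\Gamma$, then either $\Gamma$ lies on a quadric — excluded exactly as above, since $\deg\Gamma=12$ would force a $g^1_{\leq 6}$ on $C$ through the rulings — or the two cubics are coprime and $\Gamma$ lies on a curve of degree $9<12$, both impossible. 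So there is a unique cubic $F_3\supset\Gamma$, and it is irreducible (otherwise $\Gamma$, being non-degenerate and irreducible, would lie in a plane or on a quadric). Consequently the $4$-dimensional space of quartics divisible by $F_3$ is a proper subspace of $H^0(\I_\Gamma(4))$, so there is a quartic $F_4\supset\Gamma$ not divisible by the irreducible $F_3$, hence coprime to it. Then $F_3\cap F_4$ is a complete intersection curve of degree $12$, Cohen--Macaulay and without embedded components; it contains the reduced irreducible curve $\Gamma$ of the same degree, so $\Gamma=F_3\cap F_4$.

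Finally, $\Gamma=F_3\cap F_4$ is a complete intersection of type $(3,4)$, so $\omega_\Gamma=\O_\Gamma(3)$ and $p_a(\Gamma)=19$. Since $C\to\Gamma$ is birational and $C$ is smooth, $C$ is the normalization of $\Gamma$, and $p_g(\Gamma)=g(C)=19=p_a(\Gamma)$ forces all $\delta$-invariants of $\Gamma$ to vanish; hence $\Gamma$ is smooth and $C\cong\Gamma$. This shows that $C$ is a complete intersection of type $(3,4)$ in $\P^3$.

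The step I expect to be the main obstacle is the base-point analysis, specifically the case of a single base point ($b=1$): there the crude Castelnuovo bound $\pi(11,3)=20$ is not quite enough to beat $g=19$, and one has to bring in the semicanonical relation $3\theta=K_C$ (which makes the degree-$11$ model lie on many cubics) together with the no-$g^1_6$ hypothesis (which keeps it off every quadric) to reach the needed contradiction. Everything else is routine bookkeeping with Riemann--Roch and Castelnuovo's bound on $\P^3,\P^4,\P^5$, entirely parallel to the arguments in the proofs of Theorem~\ref{thm:g3}, Proposition~\ref{prop:comp} and Proposition~\ref{prop:no}.
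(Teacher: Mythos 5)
Your proposal is correct and follows essentially the same route as the paper: Proposition~\ref{prop:no} gives birationality, Castelnuovo's bound forces $h^0(\theta)=4$, the no-$g^1_6$ hypothesis keeps the image off quadrics, the putative single base point is killed by producing two coprime cubics through the degree-$11$ image (you get four cubics via the sharper count $h^0(K_C-3p)=16$, the paper only needs two via $h^0(3\theta-3p)<19$, but the contradiction $11\leq 9$ is the same), and in the base-point-free case the curve is cut out by the cubic and a quartic not containing it. Your closing verification that $p_a=p_g=19$ forces $\Gamma$ smooth is a welcome extra detail the paper leaves implicit.
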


\begin{proof} By Proposition \ref {prop:no}, we have that $|\theta|$ is birational. 
By Castelnuovo's bound we must have $h^0(\theta)=4$.
Let $\Gamma\subset \P^3$ be the image of $C$ via the morphism determined by $|\theta|$. One has $\deg(\Gamma)\leq 12$. We claim that $\Gamma$ does not lie on a quadric. In fact, suppose that $C$ lies on a smooth quadric $Q=\P^1\times \P^1$. Then it is a curve of type $(a,b)$ with $a+b=\deg(\Gamma)\leq 12$. This implies that $\min\{a,b\}\leq 6$, so that $C$ has a $g^1_6$ a contradiction. A similar argument shows that $\Gamma$ does not lie on a quadric cone.

Suppose next that $|\theta|$ has $b>0$ base points.
By Castelnuovo's bound, one has $b=1$, so there is only one base point $p$.
We have
$h^0(\I_{\Gamma}(3)) \geq h_3(3) -h^0(\O_{\Gamma}(3))$
and
\[
  h^0(\O_{\Gamma}(3)) \leq h^0(\O_C(3))
  = h^0(3\theta-3 p) < h^0(K_C) =19.
\]
Then $h^0(\I_{\Gamma}(3)) \geq 2$.
Hence $\Gamma$ is contained in two distinct cubics $X$ and $Y$, with no common component. 
This is a contradiction because we would have $11=\deg(\Gamma)\leq 9$. This proves that $|\theta|$ is base point free.

Then we have
\[
h^0(\mathcal O_\Gamma(3))\leq h^0(\mathcal O_C(3\theta))=h^0(K_C)=19.
\]
Hence 
\[
h^0(\mathcal I_\Gamma(3))\geq h_3(3)-19=1,
\]
so there is an irreducible cubic $X$ containing $\Gamma$. Similarly, we see that $h^0(\mathcal I_\Gamma(4))\geq 5$, so there is some quartic surface $Y$ containing $C$ which does not contain the cubic $X$. Then $\Gamma$ is the complete intersection of $X$ and $Y$, as desired.  \end{proof}\medskip

\begin{remark}
\label{conj:gen2} 
The hypothesis that $C$ has no $g^1_6$ in Proposition \ref{prop:bir}
is to exclude that $|\theta|$ is non-birational and that, if
$|\theta|$ is birational, the curve $\Gamma$ lies on a quadric. On the
other hand we do know that there are points $(C,\theta)\in \S _{19}
^{\frac 1 3, 3}$ corresponding to curves on a quadric. One example is
the locus $\S$ considered in paragraph~\ref {p:g3.quadric} although,
if we believe in Conjecture \ref {conj:gen}, it should not give a
new irreducible component of $\S _{19} ^{\frac 1 3, 3}$.

Another example of points $(C,\theta)\in \S _{19} ^{\frac 1 3, 3}$, with  $|\theta|$ birational, corresponding to curves on a quadric, is the following. Consider a smooth quadric $S=\P^1\times \P^1$ in $\P^3$. Consider on $S$ curves $\Gamma$ of type $(5,6)$ with one single double point $q$ (a node or a simple cusp), and such that the $5$-secant ruling through $q$ intersects $\Gamma$, off $q$, in a divisor of the type $3p$. Let $\nu: C\to \Gamma$ be the normalization. Since the arithmetic genus of $\Gamma$ is 20, then $C$ has genus 19. 
Consider the line bundle $\theta=\nu^*(\O_\Gamma(1))\otimes \O_C(p)$, where we abuse notation and see $p$ as a point of $C$. One checks that $3\theta=K_C$, so $(C,\theta)\in \S _{19} ^{\frac 1 3, 3}$.
Here $|\theta|$ has the base point $p$.  One sees that the number of moduli on which the construction of these curves depend is 32. As in \ref {conj:gen}, one may conjecture that these curves are limits of complete intersections. 
\end{remark}

\paragraph{$g_1=4$ and $k=3$}
\label{p:g4.k3}
Here we analyze the possibilites for elements $(C,\theta)$ in $\S _{28} ^{\frac 1 3, 4}$,
 with $C$ of geometric genus $28$, $h^0(\theta)\geq 5$, and
$3\theta \lineq K_C$,
hence $\deg(\theta) = 18$.
We focus on the case when $|\theta|$ defines a
birational map. Then we must have $5\leq h^0(\theta)\leq 6$ by Castelnuovo's
bound. 

\begin{proposition}\label{prop:g_1=4,k=3} Assume $(C,\theta)$ in $\S _{28} ^{\frac 1 3, 4}$ is such that $|\theta|$ is base point free,  defines a
birational map and $C$ has no $g^1_8$. Then $C$ is a complete intersection of type $(2,3,3)$ in $\P^4$.
\end{proposition}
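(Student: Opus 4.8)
Write $\Gamma=\varphi_\theta(C)\subset\P^4$ for the image of $C$ under the morphism $\varphi_\theta$ attached to the (complete, base‑point‑free) linear system $|\theta|$; since $\varphi_\theta$ is birational onto $\Gamma$, the curve $\Gamma$ is integral, nondegenerate in $\P^4$, of degree $\deg\theta=\tfrac{2g-2}{3}=18$ and of geometric genus $28$, with $\varphi_\theta^*\O(1)=\theta$. The plan is: (i) force $h^0(\theta)=5$ using Castelnuovo's bound and the absence of a $g^1_8$; (ii) produce a unique quadric through $\Gamma$; (iii) find two cubics which, together with that quadric, cut $\Gamma$ out as a complete intersection. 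For (i): Castelnuovo's bound for a degree‑$18$ curve in $\P^{h^0(\theta)-1}$ gives geometric genus $\le 21<28$ as soon as $h^0(\theta)\ge 7$, so $h^0(\theta)\in\{5,6\}$. If $h^0(\theta)=6$, the bound in $\P^5$ is exactly $28$, so $\Gamma$ is Castelnuovo‑extremal, hence smooth and (by Castelnuovo's structure theorem) lying on a surface of minimal degree $4$: a smooth rational normal scroll, a cone over a rational normal quartic, or the Veronese surface. In each case one exhibits a low‑degree pencil on $C$: on the Veronese, $\Gamma$ is the $2$‑Veronese image of a smooth plane nonic, which carries a $g^1_8$; on a smooth scroll, the degree and genus force $\Gamma\cdot f=5$ for the ruling class $f$, a $g^1_5$; on a cone, projection from the vertex gives a $g^1_m$ with $m\le 4$. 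All contradict the hypothesis, so $h^0(\theta)=5$.

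\textbf{The quadric.} By Riemann--Roch $h^1(2\theta)=h^0(K_C-2\theta)=h^0(\theta)=5$, so $h^0(C,2\theta)=14$; as $h^0(\O_\Gamma(2))\le h^0(C,2\theta)=14<15=h^0(\O_{\P^4}(2))$ we get $h^0(\I_\Gamma(2))\ge 1$, a quadric $Q\supseteq\Gamma$, necessarily irreducible (a reducible quadric would put the integral curve $\Gamma$ in a hyperplane). Next I would show $h^0(\I_\Gamma(2))=1$: if there were two independent quadrics through $\Gamma$, then $\Gamma$ would lie on a nondegenerate surface of degree $3$ or $4$ in $\P^4$ (a two‑dimensional component of the intersection of the two quadrics) — a cubic scroll or cone, a quartic del Pezzo surface or one of its degenerations, or a cone over an elliptic or rational quartic curve. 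In each case $C$ acquires a pencil of degree $\le 8$ (a ruling, a conic‑bundle structure, a blow‑down to $\P^2$, or the pull‑back of a $g^1_2$ from an elliptic base), contradicting the absence of a $g^1_8$. Hence $Q$ is the unique quadric through $\Gamma$.

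\textbf{The cubics and the conclusion.} Since $Q$ is a quadric hypersurface, $h^0(\O_Q(3))=35-5=30$, while $h^0(\O_\Gamma(3))\le h^0(C,3\theta)=h^0(K_C)=28$; therefore $h^0(Q,\I_{\Gamma/Q}(3))\ge 2$, i.e.\ $\Gamma$ lies in a pencil $\Lambda$ of cubic sections of $Q$. I claim $\Lambda$ has no fixed component: a fixed component would be cut on $Q$ by a form of degree $d'\le 2$ (a higher $d'$ makes $\Lambda$ trivial), and $d'=1$ forces $\Gamma$ — or a residual pencil of surfaces of degree $\le 8$ through $\Gamma$, impossible since $\deg\Gamma=18$ — to be degenerate, while $d'=2$ would put $\Gamma$ on a second quadric, against the previous step (when $Q$ is a cone one checks that the non‑Cartier fixed divisors lead back to the same two contradictions). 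Thus two general members $X_1,X_2\in\Lambda$ meet $Q$ properly, in a curve $W=Q\cap X_1\cap X_2$ of degree $(\O_Q(3))^2\cdot\O_Q(1)=9\deg Q=18$. Then $Q,X_1,X_2$ form a regular sequence, so $W$ is arithmetically Cohen--Macaulay of pure dimension $1$ and degree $18$; the integral curve $\Gamma\subseteq W$ already has degree $18$, so $W=\Gamma$ scheme‑theoretically, whence $\Gamma$ is a complete intersection of type $(2,3,3)$ in $\P^4$. Its arithmetic genus is then $28$, equal to its geometric genus, so $\Gamma$ is smooth and $C=\Gamma$.

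\textbf{Main obstacle.} The cohomological skeleton (the Riemann--Roch estimates, the dimension counts producing the quadric and the cubics, and the final degree argument) is routine. The real work is the surface bookkeeping in the two places where one must rule out that $\Gamma$ lies on a surface of small degree, using that ``$C$ has no $g^1_8$'' is \emph{exactly} strong enough to exclude each minimal‑degree (and slightly larger‑degree) surface; in particular one needs to know the classification of degree $3$ and $4$ surfaces in $\P^4$ and $\P^5$ and their pencil/conic‑bundle structures. The most delicate sub‑cases are the cones, where the naive ``ruling'' pencil only has degree $9$, so one must either extract a second pencil of degree $\le 8$ or exclude the offending curves by an existence/moduli estimate in the spirit of paragraph~\ref{p:g3.quadric}; the analogous non‑Cartier subtlety in the no‑fixed‑component argument when $Q$ is singular requires the same care.
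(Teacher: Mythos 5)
Your skeleton is the same as the paper's: Castelnuovo's bound together with the classification of extremal curves of degree $18$ in $\P^5$ forces $h^0(\theta)=5$; the cohomological counts produce a quadric $Q$ and two further independent cubics through $\Gamma$; and the degree count $2\cdot 3\cdot 3=18=\deg\Gamma$ concludes. Your repackaging of the intermediate case analysis as ``the quadric through $\Gamma$ is unique'' plus ``the pencil of cubic sections of $Q$ through $\Gamma$ has no fixed component'' is a legitimate reorganization of what the paper does by taking the surface component $S$ of $Q\cap X\cap X'$ and running through $\deg(S)=5,4,3$. However, the two exclusion steps you defer are the actual substance of the proof, and one of your claims about them is inaccurate.

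First, to get $h^0(\I_\Gamma(2))=1$ you must rule out that $\Gamma$ lies on a quartic del Pezzo surface (possibly singular or non-normal) or on a cubic scroll or cone, and for the del Pezzo the phrase ``a conic-bundle structure'' does not by itself yield a $g^1_{\leq 8}$: the ten conic classes $F$ on a quartic del Pezzo sum to $5H$, so the intersection numbers $\Gamma\cdot F$ average exactly $9$, and proving that at least one of them is $\leq 8$ requires either an integrality argument on the class of $\Gamma$ or, as in the paper, the adjunction computation exploiting $3\theta=K_C$ (writing $6H\sim\Gamma+D$, then $2H\sim D+E$, and dividing into cases according to $|E|$) --- this is the longest and most delicate part of the paper's proof. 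Second, and more seriously, your parenthetical assertion that when $Q$ is a cone ``the non-Cartier fixed divisors lead back to the same two contradictions'' is false: on a singular quadric the pencil of cubic sections through $\Gamma$ can acquire a fixed component of degree $5$ (a Weil, non-Cartier divisor, residual to a plane in a cubic section of $Q$), and this case reduces neither to a second quadric through $\Gamma$ nor to degeneracy of $\Gamma$. It is exactly the paper's $\deg(S)=5$ case, which needs its own argument: $S$ is arithmetically Cohen--Macaulay with a $2\times 3$ Hilbert--Burch matrix of degrees $(1,1,2)$ in each row, its adjoint pencil consists of conics, and one uses quartic hypersurfaces containing $\Gamma$ but not $S$ to extract a $g^1_{\leq 8}$ both when $S$ is a scroll and when it carries an irreducible conic pencil. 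Until these exclusions are carried out, your text is a correct outline rather than a proof.
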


\medskip\noindent
(By way of comparison, a general curve complete intersection of type
$(3,4)$ has Clifford index $10$ by Lemma~\ref{l:cliff}, hence has no
$g^1_{11}$). 

\begin{proof} First we assume $h^0(\theta)= 6$. The maximum genus of non-degenerate curves of degree 18 in $\P^5$ is 28, so $|\theta|$ embeds $C$ in $\P^5$ as a smooth Castelnuovo curve. These curves are classified (see
\cite[p.~122--123]{acgh}), and they are of two types, namely:\\
\begin{inparaenum}[(a)]
\item $C\subset \P^5$ is the $2$-Veronese image of a smooth plane curve of degree $9$;\\ 
\item $C$ is a smooth element of $|5H-2F|$ on a rational normal
scroll $S$, with $H$ the hyperplane section and $F$ a ruling of $S$.
\end{inparaenum}

In either case $C$ has a $g^1_d$ with $d\leq 8$, so we can exclude these cases.

Assume next that $h^0(\theta)= 5$ and let $\Gamma$ be the image of $C$ via $|\theta|$. 
We have $h^0(\O_\Gamma(2))\leq h^0(2\theta)=h^1(\theta)=14$ by Riemann--Roch, hence
$h^0(\I_\Gamma(2))\geq 1$, thus $C$ sits on an irreducible quadric $Q$.
Next $h^0(\O_\Gamma(3))\leq h^0(K_C)=28$, hence
$h^0(\I_\Gamma(3)) \geq 7$, thus $C$ sits on at least two cubics $X$ and
$X'$ that do not contain $Q$. If $Q, X$ and $X'$ intersect in a curve, by degree reasons this curve is $\Gamma$, which is a complete intersection of type $(2,3,3)$ as desired. 

So we have to discuss the case in which $Q, X$ and $X'$ intersect along an irreducible surface $S$ containing $\Gamma$. Since $X'$ is independent of $Q$ and $X$, we see that $\deg(S)\leq 5$. 

Assume first $\deg(S)= 5$. In this case $S$ is the complete intersection of a quadric and a cubic off a plane $\Pi$. This is an arithmetically Cohen--Macaulay surface whose Hilbert--Burch matrix is of the type
\begin{equation}\label{eq:mat}
\begin{pmatrix} 
    1 & 1 & 2 \\
    1 & 1 & 2 
  \end{pmatrix}
\end{equation}
(the numbers at the entries stand for the degrees of the corresponding
forms) and its general hyperplane section $D$ is a curve with
arithmetic genus 2. By adjunction, the hyperplanes containing $\Pi$
cut out on $S$, off the intersection scheme of $S$ with $\Pi$, a
pencil $\mathcal P$ which is adjoint to the hyperplane section of $S$,
so the curves in $\mathcal P$ have degree 2. If the general curve in
$\mathcal P$ is reducible, then $S$ is a scroll. Otherwise the general
curve in $\mathcal P$ is an irreducible conic and $S$ is rational.

We compute $h^0(\O_\Gamma(4))\leq h^0(K_C+\theta)=45$, hence
$h^0(\mathcal I_\Gamma(4))\geq 25$. From the matrix \eqref {eq:mat} we
deduce that $h^0(\mathcal I_S(4))= 23$, so there are quartics
containing $\Gamma$ and not $S$. If $S$ is a scroll, its genus
is the geometric genus of $D$, which is at most 2.
This   implies that  $C$ has a $g^1_8$, so this case is not possible. 
If the general conic of $\mathcal P$ is irreducible, the pull-back on
$C$ of the linear series cut out by these conics on $\Gamma$ is a
$g^1_d$, with $d\leq 8$, so that we can rule out this case too.

Assume next $\deg(S)=4$. Since $\Gamma$ is linearly normal, it cannot sit on a non-linearly normal surface, so $S$ is the complete intersection of two quadrics and it has only isolated singularities. So $S$ can be either a cone over an elliptic normal quartic curve in $\P^3$ or a del Pezzo surface.

If $S$ is a cone, with vertex $p$, let $m$ be the number of intersection points of $\Gamma$ with a ruling, off $p$. By taking a general hyperplane section of $\Gamma$ through $p$, we see that $4m\leq 18$ so that $m\leq 4$, which implies that $C$ has a $g^1_8$, and we can rule out this case.

Suppose $S$ is a del Pezzo surface. Thus $S$ is Gorenstein and
$K_S=\O_S(-1)$. Moreover $S$ is represented on $\P^2$ by a linear
system of cubics passing simply through five points $p_1,\ldots, p_5$,
which can be proper or infinitely near.
With computations usual by now, we find
that $h^0(\mathcal I_\Gamma(6))\geq 129$, whereas $h^0(\mathcal
I_S(6))=125$, so there are sextic hypersurfaces containing $\Gamma$
and not $S$. We let $\nu: \tilde S\to S$ be the minimal
desingularization of $S$ and, abusing notation, we denote by $\Gamma$
its proper transform on $\tilde S$. We set $H=\nu^*(\O_S(1))$, and
notice that $K_{\tilde S}=-H$. We have $6H\lineq \Gamma+D$, with
$\dim(|D|)\geq 3$. Moreover, $\Gamma+K_{\tilde S}\lineq 5H-D$. Since
the pull-back of $H$ on $C$ is $\theta$ and $3\theta=K_C$, we have
that the pull-back to $C$ of the divisors in $|2H-D|$ verifying the
adjoint conditions with respect to $\Gamma$ is the $0$ divisor (off
the pull-back of singular points). Thus we have $2H\sim D+E$, where
$E$ is effective and $H\cdot E=2$, so the curves in $|E|$ correspond
to curves of degree 2 on $S$.  There are two possibilities for the
linear system $|E|$:\\
\begin{inparaenum}
\item [(a)] $|E|$ has dimension 1 and it corresponds to a linear
system of conics on $S$ plus possibly a fixed part consisting of
$(-2)$-curves: we may then suppose that the movable part of $|E|$ is
represented in $\P^2$ by the pencil of lines through one of the points
$p_1,\ldots, p_5$;\\
\item [(b)] $h^0(E)=1$ and $E=E_1+E_2+Z$, where $E_1,E_2$ are two
  $(-1)$-curves, either distinct or not, and $Z$ is a (possibly
  empty) union of  $(-2)$-curves.
\end{inparaenum}

In case (a),  $|D|$ is represented on $\P^2$ by a linear system of curves of degree 5 with  multiplicity 1 at one of the points $p_1,\ldots, p_5$ and $2$ at the others. Since $\Gamma\lineq 6H-D$,  then $\Gamma$ is represented on $\P^2$ by a curve of degree $13$ with multiplicity at least $5$ at one of the points $p_1,\ldots, p_5$,  and this implies that $C$ has a $g^1_8$, a contradiction. 

In case (b) we can either have $E_1=E_2$ or $E_1\neq E_2$. Assume  that $E_1=E_2$. Then we may suppose that $E_1$ is the $(-1)$-curve corresponding to one of the points $p_1,\ldots, p_5$.  Then $|D|$ is represented on $\P^2$ by a linear system of curves of degree 6 with a point of multiplicity 4 and four points of multiplicity 2.  Then $\Gamma$ is represented on $\P^2$ by a curve  of degree 12 with a point of multiplicity 2 and 4 points of multiplicity 4. Again this implies that $C$ has a  $g^1_8$, a contradiction. The case in which  $E_1$ and $E_2$ are distinct is similar, and also leads to an impossibility, we leave the details to the reader.

Finally we assume $\deg(S)=3$, so that $S$ is a rational normal scroll. 
If $S$ is smooth, then  $S$ is isomorphic to  $\P^2$ blown-up at a point $p$. Suppose that $\Gamma$ is represented on  $\P^2$ by a curve of degree $d$ with a point of multiplicity $m$ at $p$. We have
\[
18= \deg(\Gamma)=2d-m
\]
hence $d-m\leq 18-d$. Since $C$ has no $g^1_8$ we must have $d\geq 10$. But then $d-m\leq 8$. Since $C$ has a $g^1_{d-m}$ we can rule out this case.

If $S$ is a cone with vertex $p$, we let $m$ be the number of intersection points of the rulings with $\Gamma$ off $p$, so that $C$ has a $g^1_m$. By intersecting $\Gamma$ with a general hyperplane through $p$, we see that $3m\leq 18$, so $m\leq 6$, and we can rule out this case too.\end{proof}\medskip

\begin{remark}
\label{conj:genal} 
The hypothesis that $|\theta|$ is base point free and $C$ has no $g^1_8$ in Proposition \ref {prop:g_1=4,k=3} helps to rule out several cases which in fact can occur. We point out here some of these possibilities.\\
\begin{inparaenum}
\item [(a)] As seen in the proof of Proposition \ref {prop:g_1=4,k=3}, the 2-Veronese image $C$ in $\P^5$ of a smooth plane curve of degree 6, is such that $(C, \O_C(1))\in \S_{28}^{\frac 13,4}$. This gives a locus in $\S_{28}^{\frac 13,4}$ of dimension 46.\\
\item [(b)] Again, we saw in the proof of Proposition \ref {prop:g_1=4,k=3} that the smooth Castelnuovo curves $C$ in $|5H-2F|$ on a rational normal
scroll $S$ in $\P^5$, with $H$ the hyperplane section and $F$ a ruling of $S$ are such that
$(C, \O_C(1))\in \S_{28}^{\frac 13,4}$. This gives a locus in $\S_{28}^{\frac 13,4}$ of dimension 47.\\
\item [(c)] This is an example of elements $(C,\theta)\in
\S_{28}^{\frac 13,4}$, with $h^0(\theta)=5$ and $|\theta|$ with a base
point. Consider a smooth cubic rational normal scroll $S$ in
$\P^4$. We denote by $E$ the $(-1)$-curve on $S$ and $F$ a
ruling. Consider a curve $\Gamma$ in $|10F+7E|$ which has an ordinary
triple point in a point $p$ of $E$, two distinct nodes $p_1,p_2$ along
a ruling $F_0$ of $S$ not passing through $p$, and such that $F_0$
cuts out on $\Gamma$, off $p_1$ and $p_2$, a divisor of type $3q$. The
arithmetic genus of $\Gamma$ is 33, so its geometric genus is 28. If
$\nu: C\to \Gamma$ is the normalization, one sees that
$\theta=\nu^*(\O_\Gamma(1))\otimes \O_C(q)$ (where we abuse notation
and denote by $q$ its pre-image on $C$) is such that $3\theta=K_C$. \\
\item [(d)] This is an example of elements $(C,\theta)\in \S_{28}^{\frac 13,4}$, with $h^0(\theta)=5$ and $|\theta|$ with no base point. Consider a del Pezzo quartic surface $S$ in $\P^4$ which is the image of the linear system of cubics in $\P^2$ with simple base points at general points $p_1,\ldots, p_5$. Consider the linear system $\L$ of plane curves of degree 13 with multiplicity 5 at $p_1$, with multiplicity 4 at $p_2,\ldots, p_5$ and with 4 nodes located along a line $L$ passing through $p_1$. The image of a general curve of $\L$ is a 4-nodal curve of geometric genus 28 and degree 18, with the 4 nodes located along a conic $E$ on $S$. The adjoint linear system of $\L$ consists of curves of degree 10, having multiplicity 4 at $p_1$, 3 at $p_2,\ldots, p_5$ and passing through the 4 nodes. The line $L$ cuts out the zero divisor off the singular points of the general curve $\Gamma\in \L$. So the canonical system is cut out by curves of degree 9 with points of multiplicity 3 at $p_1,\ldots, p_5$, which maps to the  the triple of the hyperplane system of $S$.
\end{inparaenum}
\end{remark}

\begin{closing}


\providecommand{\bysame}{\leavevmode\hbox to3em{\hrulefill}\thinspace}
\providecommand{\og}{``}
\providecommand{\fg}{''}
\providecommand{\smfandname}{and}
\providecommand{\smfedsname}{eds.}
\providecommand{\smfedname}{ed.}
\providecommand{\smfmastersthesisname}{M\'emoire}
\providecommand{\smfphdthesisname}{Th\`ese}


\medskip\noindent
Ciro Ciliberto.
Dipartimento di Matematica.
Università degli Studi di Roma Tor Vergata.
Via della Ricerca Scientifica,
00133 Roma, Italy.
\texttt{cilibert@mat.uniroma2.it}

\medskip\noindent
Thomas Dedieu.
Institut de Mathématiques de Toulouse~; UMR5219.
Université de Toulouse~; CNRS.
UPS IMT, F-31062 Toulouse Cedex 9, France.
\texttt{thomas.dedieu@math.univ-toulouse.fr}

\renewcommand{\thefootnote}{}
\footnotetext
{The first-named author is a member of GNSAGA of INdAM. He
acknowledges the MIUR Excellence Department Project awarded to the
Department of Mathematics, University of Rome Tor Vergata, CUP
E83C18000100006.}
\end{closing}

\end{document}